\DeclareMathAlphabet{\mathpzc}{OT1}{pzc}{m}{it}
\newtheorem{theorem}{Theorem}[section]
\newtheorem{lemma}{Lemma}[section]
\newtheorem{remark}{Remark}[section]
\theoremstyle{definition} \theoremstyle{remark}
\numberwithin{equation}{section}
\date{}
\begin{document}

\markboth{J. He,J. Huang, F. Su}{Approximation for stochastic time-space...}

\date{}
\baselineskip 0.22in
\title{{\bf Approximation for stochastic time-space fractional cable equations driven by rough noise}}

\author{Jiawei He$^{1,2}$, Jianhua Huang$^{1}$, Fang Su$^{1,*}$ \\[1.8mm]
	\footnotesize  {$^1$ National University of Defense Technology, Hunan 411100, China}\\
	\footnotesize  {$^2$ School of Mathematics, Guangxi University, Nanning 530004, China} 
}

\maketitle

\begin{abstract}
The time-space fractional cable equation arises from extending the generalized fractional Ohm's law to model anomalous diffusion processes. In this paper,  we develop and analyze a numerical approximation  for stochastic nonlinear
time-space fractional cable equation driven by rough noise.  The model involves both two nonlocal terms in time and one in space. By an operator theoretic approach, we establish the existence, uniqueness, and  regularities of solutions. We also obtain a convergence result for the regularized equation via Wong-Zakai approximation to regularize the rough noise.
The numerical scheme approximates the model in space by the standard spectral Galerkin method and in time by the backward Euler convolution quadrature method. After that, error estimates are established. 
\\[2mm]
{\bf Keywords:} Stochastic fractional cable equation, rough noise, Wong-Zakai approximation, erroe analysis.\\[2mm]
{\bf 2020 MSC:}  35R11, 65C30, 65M15
\end{abstract}

\baselineskip 0.25in
\section{Introduction}\label{}
\setcounter{section}{1} \setcounter{equation}{0}

As we known, the classical cable equation was derived from the Nernst-Planck equation and serves as the most fundamental equation for modeling neuronal dynamics to simulate the electrodiffusion of ions.
 However, when ions exhibit anomalous subdiffusion behavior, the fractional cable equation becomes a powerful mathematical model to incorporate these abnormal diffusion phenomena.
Santamaria et al. \cite{Santamaria} investigated the dynamics of Purkinje cell dendrites influenced by spines, highlighting that this is a dynamic process due to the fact that molecules can both enter and leave spines. To elucidate the anomalous electrodiffusion of ions in spiny dendrites, Henry et al. \cite{Henry,Henry2008} reconstructed a fractional cable equation from the fractional Nernst-Planck equations. By integrating the current continuity equation with the longitudinal current density, Langlands et al. \cite{Langlands} derived the fractional cable equations from the fractional Nernst-Planck equation as macroscopic models for ion electrodiffusion in nerve cells on infinite and semi-infinite domains. These models account for the ionic transmembrane current and any external current traversing the membrane.

It is known that fractional derivatives can be associated with the behavior of a group of particles that are engaged in a continuous-time random walk \cite{Se,Mainardi2010,Metzler}. By substituting a spatial fractional derivative for the Laplacian operator in the diffusion equation, a solution in this equation characterizes the probability density function for particles that are involved in a random walk with heavy tails, a process where infrequent large jumps are more significant than the usual smaller ones. Conversely, a time fractional derivative in time results represent subdiffusion process, a scenario where the intervals between particle jumps follow a probability distribution with a long tail, and the mean square displacement of a group of particles is proportional to $\eta \in(0,1)$. In this way, when examining the processes of anomalous diffusive transport that stem from temporal memory and spatial non-local effects, a common approach is to modify the constitutive equation (Ohm's law) into a generalized fractional form. By applying this modification to the fractional flux for both the total ionic transmembrane current density and the injected current density,
Li and Deng \cite{Li-Deng} derived the following time-space fractional cable equation
$$
r_mc_m\partial_t V(x,t)=\frac{r_m d}{4r_L}\partial_t^{1-\alpha} \nabla ^{2s} V(x,t)-D(\beta)\partial_t^{1-\beta}(V(x,t)-V_{rest}-r_mi_e(x,t)),
$$
where $r_m$ denotes the specific membrane resistance, $r_L$ signifies the longitudinal resistivity depending on $\alpha\in (0,1),s\in(0,1)$, and $c_m$ is the symbol for membrane capacitance per unit area, and $i_e$ represents the external current injected per unit area. $D(\beta)$ is a parameter depending on $\beta\in(0,1)$. $\nabla^{2s}$ is the Riesz fractional operator and $\partial_t^{1-\eta}$ is the Riemman-Liouville fractional derivative, $\eta=\alpha,\beta$ with $0<\beta\leq \alpha<1$.

Observe that the Riesz fractional operator
$\nabla^{2s} =-( \widetilde{\partial}_x^{2s} +\widetilde{\partial}_{-x}^{2s})/(2\cos(\pi s ))$,
where $\widetilde{\partial}_x^{2s}$, $\widetilde{\partial}_{-x}^{2s}$ are the left and right Riemann-Liouville fractional derivatives,  respectively, for instance, refer to the work of Saichev et al. \cite{Se}, where the Riesz fractional derivative $\nabla^{2s}$
is equivalent to the fractional Laplacian operator $-(-\Delta)^{s}$
when analyzed within the context of Fourier transforms, as they share identical frequency multipliers. The relationship is reasonable to concern with time-space fractional cable equation with nonlinear source term and external fractional Brownian sheet noise
\begin{equation}\label{P}
	\left\{	\begin{aligned}
	 \partial_tu &+\lambda \partial^{1-\beta}_{t} u +\mu \partial^{1-\alpha} _{t}  A^s u =f(u)+\gamma(t) \xi^{H_1,H_{2}} ,~~&&{\rm in}\ D\times (0,T],\\
u &=0,&& {\rm on}\  \partial D \times (0,T],\\
  u &=g ,&& {\rm in}\   D \times\{t=0\},
	\end{aligned} \right.
	\end{equation}
where  $D=(0,l)$ is a bounded domain in one dimension $\mathbb R$ with $l$ being a positive constant,   $0<\beta\leq \alpha<1$. Let $A=-\Delta$ the Laplacian operator, Function $\gamma(\cdot)\in C^1([0,T])$ with $\gamma(0)=0$, $\mu>0$ and $\lambda\geq 0$ are constants. $\partial^{\eta} _{0+} $ represents the Riemann-Liouville fractional partial derivative of order $\eta\in(0,1)$ as follows
$$
\partial^{\eta} _{t} y(x,t)=\frac{1}{\Gamma(1-\eta)}\partial_t\int_0^t (t-s)^{-\eta} y(x,s)ds.
$$
The stochastic term $\xi^{H_1,H_{2}} $ is defined by
$$
\xi^{H_1,H_{2}}(x,t)=\frac{\partial^{2}W^{H_1,H_{2}}(x,t)}{\partial x   \partial t}
$$
with $W^{H_1,H_{2}}(x,t)$ being a fractional Brownian sheet on a stochastic basis
$(\Omega,\mathcal F, (\mathcal F_t)_{t\in I},\mathbb P)$, $I=[0,T]$, such that
$$
\mathbb E(W^{H_1,H_{2}}(x,t)W^{H_1,H_{2}}(y,s))= R_{H_{1}}(x ,y ) R_{H_{2}}(s,t)
 ,
$$
for which  $(x,t),(y,s)\in D\times I$,  and
$$
R_H(x,y)=\frac{|x|^{2H}+|y|^{2H}-|x-y|^{2H}}{2},
$$
where $H_1,H_2\in (0,\frac{1}{2}]$ are spatial Hurst parameters, meaning that the noise is rough in space, and $\mathbb E$ is the expectation acting at $L^2(\Omega,L^2(D))$ as
$$
\mathbb E \|u\|^2=\int_\Omega |u(\omega)|^2 d\mathbb P
<\infty .$$

Various studies have been dedicated to the time fractional cable equation. Notably, the research teams led by Langlands et al. \cite{Henry,Henry2008,Langlands,Langlands-2} have shown that the mathematical treatment of fractional derivatives featuring singular kernels is consistent with the behavior observed in anomalous diffusion processes. This consistency has sparked a surge of interest among researchers in the field of fractional cable equations, with a particular emphasis on the numerical findings that have emerged in recent times.
Liu et al.\cite{Liu11} examined the stability and convergence properties of two implicit numerical methods. Subsequently, Zhuang et al. \cite{Zhuang} proposed the use of the Galerkin FEM for the numerical simulation of the fractional cable equation. Zheng and Zhao \cite{Zheng} explored a combination of the discontinuous Galerkin finite element method along the time axis and the Galerkin finite element scheme along the spatial axis. By transforming the time fractional cable equation into an equivalent integral equation, Yang et al. \cite{Yang} conducted a study on the numerical solution and performed a convergence analysis. More recently, Li et al. \cite{LI-Y} applied the Galerkin FEM to verify the numerical analysis driven by fractional integrated additive noise. Nevertheless, there are constraints on the outcomes for the time-space case. Li and Deng \cite{Li-Deng} obtained analytical solutions characterized by Green's functions and the asymptotic behaviors of the corresponding fractional moments. Meanwhile, Saxena et al. \cite{Saxena} developed the fundamental solution and its asymptotic behavior, expressed through an infinite series using the Fox-H function.
 
In recent years, there are many results about the stochastic partial differential equation driven by Brownian sheet noise for Hurst parameter, see e.g.  \cite{Bo,Ca,Hu,Hu2,Hong26}. 
Specifically, Cao et al. \cite{Cao} explored the numerical outcomes for a class of stochastic evolution equations that incorporate additive white and rough noises. By employing the Wong-Zakai approximation to regularize the noise, they achieved an optimal order of convergence. In this paper, our primary focus lies on the time-space fractional cable equation, which is driven by fractional Brownian sheet noise in both the temporal and spatial directions, with the Hurst parameter being less than or equal to 
$1/2$.
By employing an operator theoretic approach, the existence, uniqueness, and regularity of solution are first established requiring $2sH_2+(H_1-1)\alpha>0$. In order to generate noise with enhanced smoothness, the Wong-Zakai approximation stands out as a widely adopted approach. Furthermore, a comprehensive analysis regarding the convergence rate of the solution or the regularized equation under this specific approximation form is also presented. Additionally, we use the spectral Galerkin method and backward Euler convolution quadrature method to build the spatial discretization and the temporal discretization, the theoretical results show that the convergence order is also dependent of the parameters $\alpha,$ $s$ and the Hurst indexes.

The rest of the paper is organized as follows.
In Section \ref{sect2}, we introduce some essential notations  and define the relevant  function spaces. Section \ref{sect3} begins by establishing a fundamental expression for the solution. From this expression, we derive a priori estimates that provide crucial insights into the behavior of the solution. Additionally, we propose an eigenfunction expansion of the solution.
Based on It\^{o} isometry, we then establish the existence and uniqueness of the solution, which subsequently enables us to determine its spatial and temporal regularities.
Section \ref{sect4} focuses on obtaining a convergence result for the regularized equation through the application of the Wong-Zakai approximation.  In Section \ref{sect5}, we delve into the derivation of spatial and temporal discretization schemes, along with corresponding error estimates. 

\section{Preliminaries}\label{sect2}
\setcounter{section}{2} \setcounter{equation}{0}
Let $H=L^2(D)$ with the norm $\|\cdot\|$ and inner product $(\cdot,\cdot)$.
As is known, the Dirichlet-Laplace operator $A=-\Delta$  on a bounded domain $D\subset \mathbb R^d$  with smooth boundary $\partial D$,  we have the following spectral problem
\begin{equation}\label{spectral}
A e_k(x)=\rho_ke_k(x),\quad x\in\Omega;\quad e_k(x)=0,\quad x\in\partial\Omega,\quad k\in\mathbb{N},
\end{equation}
where $\{\rho_k\}_{k=1}^\infty$ denotes the set of nondecreasing positive real eigenvalues such that $\rho_k\to+\infty$ as $k\to+\infty$. The corresponding eigenfunctions are defined by $e_k\in H^1_0(\Omega)\cap H^2(\Omega)$ for every $k\in\mathbb{N}$.
The eigenfunctions $e_k$ are normalized so that $\{e_k\}_{k=1}^\infty$ is an orthonormal basis of $H$.

For all $q\ge -1$, denote $\dot{H}^q(D)\subset H^{-1}(D)$ the
Hilbert space induced by the norm as
\[\|u\|_{\dot{H}^q(D)}^2=\sum_{k=1}^\infty \rho_k ^q( u,e_k) e_k .\]
As usual,  $\dot{H}^q(D) $ ($q\ge -1$) forms a Hilbert scale of interpolation spaces,
moreover an Hilbert space $H_0^p(D)$ is interpolation scale  with the $K$-method between $H^1_0(D)$ and $H$ for $p\in [0,1]$ with the norm $\|\cdot\|_{H_0^1(D)}$. Thus,  by interpolation, the $\dot{H}^p(D)$ and $H_0^p(D)$ norms are equivalent for $p\in [0, 1]$. We shall
 use $H$ to denote the space $\dot{H}^0(D)$. We define a fractional power space of $A^{q}$ by $\dot{H}^{q}(D)=D(A^{q/2})$ with norm $\|u\|_{\dot{H}^q(D)}=\|A^{q/2}u\|$. Clearly, $D(A^{1/2})=H_0^1(D)$.

We denote  the fractional Sobolev space of order $s\in(0,1)$ by
$$
H^{s}(D)=\Big\{u\in H;~~|u|_{H^s(D)}^2:=\int_D\int_D \frac{(u(x)-u(y))^2}{|x-y|^{d+2s}}dxdy <\infty\Big\},
$$
and its norm given by $\|\cdot\|_{H^s(D)}=\|\cdot\|+|\cdot|_{H^s(D)}$.
Let $H^s_0(D)$ denote the closure of set $\mathcal D$ in $H^s(D)$, where
$\mathcal D$ denotes the set of  $C^\infty $ functions with compact support in $D$, Its norm is given by
$$
\|u\|_{H^s_0(D)}=\|u\|+c_{d}\int_{\mathbb R^d}\int_{\mathbb R^d}\frac{(u(x)-u(y))^2}{|x-y|^{d+2s}}dxdy ,
$$
for  $c_{d,s}=4^ss\Gamma(d/2+s)\pi^{-d/2}/\Gamma(1-s)$.
It is known that $H^s(D)=H^s_0(D)$ for $s\in [0,1/2)$, and $\dot{H}^s(D)=H_0^s(D)$ for $s\in [0,3/2)$.

\section{Regularity of the solutions}\label{sect3}
\setcounter{section}{3} \setcounter{equation}{0}

For $\theta\in (\pi/2,\pi)$ and $\kappa>0$, let $\Sigma_\theta$ be defined by
$$
\Sigma_\theta=\{z\in \mathbb C:\ \ |{\rm arg z}|\leq\theta,~~z\neq 0\},~~\Sigma_{\kappa,\theta}=\{z\in \mathbb C:\ \ |{\rm arg }z|\leq\theta,~~|z|\geq \kappa\}.
$$
The contour $\Gamma_{\kappa, \theta}$ is defined  by
$$
\Gamma_{\kappa, \theta}=\{re^{-i\theta}:~r\geq\kappa\}\cup\{\kappa e^{i\psi}:~|\psi|\leq\theta\}\cup \{re^{ i\theta}:~r\geq\kappa\},
$$
where the circular arc is oriented counterclockwise and the two rays are oriented with
an increasing imaginary part and $i^2=-1$.

\subsection{A basic solution expression}
We recast problem (1.1) with $f\equiv 0$ and $\gamma\equiv0$ into a Volterra integral equation by 
$$
u(x,t)+\int_0^t[\lambda k_{ \beta}(t-\tau) u(x,\tau)+\mu k_{ \alpha}(t-\tau)A^su(x,\tau)]d\tau=g(x),
$$
where $k_\varsigma(t)=t^{\varsigma-1}/\Gamma(\varsigma)$ for $\varsigma>0$ and $\Gamma(\cdot)$ is the usual Euler gamma function. It is well known that the operator $-A^s$ generates a bounded analytic semigroup of angle greater than or equal to $ \pi/2-\alpha\omega:=\theta>0$, i.e.,  
$$
\|(z+A^s)^{-1}\|\leq M/|z|,~~z\in\Sigma_{\pi-\theta},
$$
for some constant $M>0$. By applying the Laplace transform yields
$$
\hat{u}(z)+\lambda z^{-\beta }\hat{u}(z)+\mu z^{-\alpha }A^s\hat{u}(z)=z^{-1}g,
$$
hence $\hat{u}(z)=H(z)g$, with the kernel $H(z)$ given by
$$
H(z)= \mu^{-1}z^{\alpha-1}(h(z)I+A^s)^{-1},~~h(z)=\mu^{-1}z^\alpha(1+\lambda z^{ -\beta}).
$$
By means of the uniqueness of the inverse
Laplace transform, one deduces that the solution operator $S(t)$ is given by
\begin{equation}\label{St}
S(t)=\frac{1}{2\pi i}\int_{\Gamma_{\kappa,\pi-\theta}}e^{zt}H(z)dz.
\end{equation}
\begin{lemma}\label{es1}
Let $\theta\in(\pi/2,\pi)$, $\lambda>0$, then $h(z)\in\Sigma_{\pi-\theta}$ for $z\in \Sigma_{\pi-\theta}$ and
$$
|h(z)|^{-1}\leq c\mu \min\{|z|^{-\alpha},~ |z|^{\beta-\alpha } \}.
$$
\end{lemma}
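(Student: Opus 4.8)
The plan is to handle the two assertions separately, each reducing to elementary estimates on the arguments and moduli of the two monomials making up $h$. Writing $z=|z|e^{i\phi}$ with $|\phi|\le\pi-\theta$, and noting that the hypothesis $\theta>\pi/2$ forces $\pi-\theta<\pi/2$, I would split
$$
h(z)=\mu^{-1}z^{\alpha}+\mu^{-1}\lambda z^{\alpha-\beta}.
$$
The first term has argument $\alpha\phi$ and the second $(\alpha-\beta)\phi$; both carry the same sign as $\phi$ and are bounded in absolute value by $\alpha(\pi-\theta)$, since $0\le\alpha-\beta<\alpha<1$. Because $\alpha(\pi-\theta)<\pi/2$, the sector $\Sigma_{\alpha(\pi-\theta)}$ is a convex cone; the positive factors $\mu^{-1}$ and $\lambda$ do not alter arguments, so both summands lie in this cone and therefore so does their sum. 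This gives $|\arg h(z)|\le\alpha(\pi-\theta)\le\pi-\theta$, that is, $h(z)\in\Sigma_{\pi-\theta}$.

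For the modulus estimate I would write $|h(z)|=\mu^{-1}|z|^{\alpha}\,|1+\lambda z^{-\beta}|$ and bound $|1+\lambda z^{-\beta}|$ from below in two complementary ways. The key observation is that $\arg z^{-\beta}=-\beta\phi$ satisfies $|\beta\phi|\le\beta(\pi-\theta)<\pi/2$, so $z^{-\beta}$ lies in the open right half-plane with $\mathrm{Re}(z^{-\beta})=|z|^{-\beta}\cos(\beta\phi)>0$. Taking real parts yields
$$
|1+\lambda z^{-\beta}|\ge 1+\lambda|z|^{-\beta}\cos(\beta\phi)\ge 1,
$$
while discarding the $1$ and using that $\cos$ is decreasing on $[0,\pi/2]$ gives
$$
|1+\lambda z^{-\beta}|\ge\lambda\cos(\beta(\pi-\theta))\,|z|^{-\beta},
$$
with $\cos(\beta(\pi-\theta))>0$ a fixed positive constant. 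Multiplying by $\mu^{-1}|z|^{\alpha}$ produces the two lower bounds $|h(z)|\ge\mu^{-1}|z|^{\alpha}$ and $|h(z)|\ge\mu^{-1}\lambda\cos(\beta(\pi-\theta))|z|^{\alpha-\beta}$; inverting these and taking the smaller right-hand side gives $|h(z)|^{-1}\le c\mu\min\{|z|^{-\alpha},\,|z|^{\beta-\alpha}\}$ with $c=\max\{1,(\lambda\cos(\beta(\pi-\theta)))^{-1}\}$.

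The main obstacle—indeed essentially the only delicate point—is verifying the uniform angular bounds that keep everything inside the right half-plane and inside a convex sub-sector: namely $\beta(\pi-\theta)<\pi/2$ (so that $\mathrm{Re}(1+\lambda z^{-\beta})\ge 1$ holds throughout $\Sigma_{\pi-\theta}$) and $\alpha(\pi-\theta)<\pi/2$ (so that the sum of the two monomials cannot rotate out of $\Sigma_{\pi-\theta}$). Both follow at once from $\alpha,\beta\in(0,1)$ together with $\pi-\theta<\pi/2$, and it is precisely here that the hypothesis $\theta\in(\pi/2,\pi)$ enters. Once these inequalities are in hand the remainder is bookkeeping, the only care being to track the constant $c$ and to observe that, since $\beta-\alpha\ge-\alpha$, the minimum is governed by $|z|^{-\alpha}$ for $|z|\ge 1$ and by $|z|^{\beta-\alpha}$ for $|z|\le 1$, matching the two regimes in which the respective lower bounds are sharp.
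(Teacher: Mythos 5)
Your proof is correct, and for the key modulus estimate it takes a genuinely different (and in fact more robust) route than the paper. The sector-membership argument is essentially the paper's: both treatments observe that the two monomials $\mu^{-1}z^{\alpha}$ and $\mu^{-1}\lambda z^{\alpha-\beta}$ have arguments of the same sign bounded by $\alpha|\arg z|\le\alpha(\pi-\theta)<\pi-\theta$, so their sum stays in the sector (your explicit appeal to convexity of $\Sigma_{\alpha(\pi-\theta)}$ just makes the "sum of two terms" step airtight). For the lower bound on $|h(z)|$, however, the paper factors $h(z)=\mu^{-1}z^{\alpha-\beta}(z^{\beta}+\lambda)$ and bounds $|z^{\beta}+\lambda|^{2}$ from below by completing the square, arriving at $|z^{\beta}+\lambda|\ge\lambda\sin(\beta\psi)$ and $|z^{\beta}+\lambda|\ge r^{\beta}\sin(\beta\psi)$; these give the two regimes $|z|^{\beta-\alpha}$ and $|z|^{-\alpha}$ but with a factor $1/\sin(\beta\psi)$ that degenerates as $\psi\to 0$, i.e.\ the constant is not uniform near the positive real axis, which the sector $\Sigma_{\pi-\theta}$ contains. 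You instead take real parts: since $|\beta\arg z|\le\beta(\pi-\theta)<\pi/2$, you get $|1+\lambda z^{-\beta}|\ge\operatorname{Re}(1+\lambda z^{-\beta})\ge\max\{1,\lambda\cos(\beta(\pi-\theta))|z|^{-\beta}\}$, yielding both regimes with the explicit uniform constant $c=\max\{1,(\lambda\cos(\beta(\pi-\theta)))^{-1}\}$. So your version buys a constant that is genuinely uniform over the whole sector, at no extra cost; the closing remark about which branch of the minimum is active for $|z|\gtrless 1$ is accurate but not needed, since each bound holds separately on all of $\Sigma_{\pi-\theta}$.
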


\begin{proof}

Let $z\in \Sigma_{\pi-\theta}$, i.e., $z=re^{i\psi}$ for $|\psi|<\pi-\theta$, $r>0$, then
$$
h(z)=\mu^{-1}r^\alpha e^{i\alpha\psi}+\mu^{-1}r^{\alpha-\beta} e^{i(\alpha-\beta)\psi}\in \Sigma_{\pi-\theta},
$$
since $0<\beta\leq\alpha<1$. We note that  
$$
|z^\beta+\lambda|^2=\lambda^2+2\lambda r \cos(\beta\pi)+r^{2\beta}>\lambda^2+2\lambda r\cos(\beta\psi)+r^{2\beta},
$$
and function $k(x)=\lambda^2+2\lambda \cos(\beta\psi)x+x^2$ attains its minimum at $x=-\lambda \cos(\beta\psi)$, then 
$$
|z^\beta+\lambda|^2>\lambda^2(1- \cos(\beta\psi)^2)=\lambda^2\sin(\beta\psi)^2,
$$
it follows from $\lambda>0$ and $\sin(\beta\psi)>0$ that
$$
|h(z)|^{-1}=\frac{\mu  }{|z^{\alpha-\beta}||z^\beta+\lambda|}
<\frac{\mu}{\lambda r^{\alpha-\beta} \sin(\beta\psi) }=\frac{\mu|z|^{\beta-\alpha}}{\lambda  \sin(\beta\psi) }.
$$
Moreover, we have
$$
|z^\beta+\lambda|^2>(\lambda+r^\beta\cos(\beta\psi))^2+(r^\beta\sin(\beta\psi))^2\geq (r^\beta\sin(\beta\psi))^2 ,
$$
which shows that
$$
|h(z)|^{-1} =\frac{\mu}{|z^{\alpha-\beta}||z^\beta+\lambda|}
<\frac{\mu }{ r^{\alpha }  \sin(\beta\psi) }=\frac{\mu |z|^{- \alpha}}{ \sin(\beta\psi) }.
$$
This ends the proof.
\end{proof}

\subsection{A priori estimate}

We first state the regularity to problem (1.1) with $f\equiv 0$ and $\gamma\equiv0$.

\begin{theorem}\label{Thm1}
For any $g\in H$, $s\in(0,1)$, and $f\equiv 0$ and $\gamma\equiv0$, then there exists a unique solution $u$ to
problem (1.1) satisfying
$$
u=S(t)g\in C([0,T];H)\cap C^1((0,T];H^s(\Omega)).
$$
Moreover, the stability estimates hold for $t\in (0, T ]$ and $\nu = 0, 1$, $m=1,2,\cdots$ as
$$
\|A^{s\nu} S^{(m)}(t)g\|\leq c t^{-m-\alpha\nu }\|g\|,~~~g\in H.
$$
$$
\|A^{s\nu}  S^{(m)}(t)g\|\leq c t^{-m+(1-\nu)(\alpha-2\beta ) }\|A^s g\|,~~~g\in D(A^s).
$$
\end{theorem}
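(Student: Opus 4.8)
The plan is to read everything off the Dunford-type representation \eqref{St} together with the two facts already in hand: the sectorial bound $\|(z+A^s)^{-1}\|\le M/|z|$ on $\Sigma_{\pi-\theta}$ and Lemma~\ref{es1}, which provides $h(z)\in\Sigma_{\pi-\theta}$ and $|h(z)|^{-1}\le c\mu\min\{|z|^{-\alpha},|z|^{\beta-\alpha}\}$ for $z\in\Sigma_{\pi-\theta}$. First I would record resolvent estimates for the kernel $H(z)=\mu^{-1}z^{\alpha-1}(h(z)I+A^s)^{-1}$. Since $h(z)$ lies in the sector of analyticity, the sectorial bound applied with $z$ replaced by $h(z)$ gives $\|(h(z)I+A^s)^{-1}\|\le M/|h(z)|$, while the identity $A^s(h(z)I+A^s)^{-1}=I-h(z)(h(z)I+A^s)^{-1}$ yields $\|A^s(h(z)I+A^s)^{-1}\|\le 1+M$. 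Interpolating these two bounds produces, for $\nu\in[0,1]$,
\[
\|A^{s\nu}(h(z)I+A^s)^{-1}\|\le C\,|h(z)|^{\nu-1},
\]
and combined with Lemma~\ref{es1} this controls $\|A^{s\nu}H(z)\|$ by a pure power of $|z|$.

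Next I would differentiate \eqref{St} under the integral sign, which is legitimate because for $t>0$ the resulting integrand decays exponentially along the rays of $\Gamma_{\kappa,\pi-\theta}$, giving $S^{(m)}(t)=\frac{1}{2\pi i}\int_{\Gamma_{\kappa,\pi-\theta}}z^m e^{zt}H(z)\,dz$. For the first stability estimate I would insert $\|A^{s\nu}H(z)\|\le C|z|^{\alpha\nu-1}$, obtained from the bound $|h(z)|^{-1}\le c\mu|z|^{-\alpha}$, so that
\[
\|A^{s\nu}S^{(m)}(t)g\|\le C\|g\|\int_{\Gamma_{\kappa,\pi-\theta}}|z|^{m+\alpha\nu-1}|e^{zt}|\,|dz|.
\]
The standard device is to take $\kappa=1/t$ and split $\Gamma_{\kappa,\pi-\theta}$ into the two rays and the arc; parametrising $z=re^{\pm i(\pi-\theta)}$ and $z=t^{-1}e^{i\psi}$, the substitution $r=\rho/t$ turns the ray contributions into $C\,t^{-(m+\alpha\nu)}\int_1^\infty\rho^{m+\alpha\nu-1}e^{\rho\cos(\pi-\theta)}\,d\rho$ and the arc contributes the same order, which is the claimed rate $t^{-m-\alpha\nu}$.

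For the second estimate, where $g\in D(A^s)$, I would move one power of $A^s$ onto the data. Using $(h(z)I+A^s)^{-1}=h(z)^{-1}\bigl[I-(h(z)I+A^s)^{-1}A^s\bigr]$ to peel off the $A^sg$ contribution, the term $h(z)^{-1}g$ is a scalar multiple of $g$ and is handled sharply through the explicit (Mittag-Leffler) scalar integral, while the remaining term carries the factor $(h(z)I+A^s)^{-1}A^sg$ with $\|(h(z)I+A^s)^{-1}\|\le M/|h(z)|$. Now the sharper bound $|h(z)|^{-1}\le c\mu|z|^{\beta-\alpha}$ is available near the origin, and balancing the two bounds of Lemma~\ref{es1} over the low- and high-frequency portions of the contour, then repeating the $\kappa=1/t$ scaling, yields the exponent $-m+(1-\nu)(\alpha-2\beta)$; at $\nu=1$ it collapses to $t^{-m}$ via the identity above, consistently with the first estimate. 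The qualitative claim $u=S(t)g\in C([0,T];H)\cap C^1((0,T];H^s(D))$ then follows: differentiability and continuity on $(0,T]$ come from dominated convergence applied to the now absolutely convergent contour integrals, membership in $H^s$ for $t>0$ from finiteness of $\|A^sS(t)g\|$ and the equivalence of the $\dot H^s$ and $H^s$ norms noted in Section~\ref{sect2}, the right-continuity at $t=0$ from $\frac{1}{2\pi i}\int_{\Gamma_{\kappa,\pi-\theta}}e^{zt}z^{-1}\,dz=1$ plus a difference estimate, and uniqueness from injectivity of the Laplace transform applied to the Volterra formulation.

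I expect the delicate point to be the second estimate. The factor $A^{-s}$ that appears when extracting $A^sg$ is merely bounded, since the spectrum of $A^s$ is bounded away from $0$, so a naive operator-norm bound only returns the factor $|h(z)|^{-1}$ and hence a cruder temporal rate. Recovering the sharp exponent $(1-\nu)(\alpha-2\beta)$ forces one to exploit the finer structure encoded in the two distinct bounds of Lemma~\ref{es1} on complementary pieces of the contour, equivalently the frequency-dependent Mittag-Leffler asymptotics of the eigen-components $S_k(t)$, and keeping that bookkeeping uniform in the eigenvalue index is where the real work lies.
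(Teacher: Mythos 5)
Your route coincides with the paper's almost line for line: the same contour representation \eqref{St} differentiated under the integral sign, the same resolvent bounds $\|(h(z)I+A^s)^{-1}\|\leq M/|h(z)|$ and $\|A^sH(z)\|\leq c|z|^{\alpha-1}$ (the paper gets the latter from $A^sH(z)=\mu^{-1}z^{\alpha-1}-h(z)H(z)$, you from the equivalent moment/interpolation inequality), the same $\kappa=1/t$ scaling of the split contour for the first estimate, and the same resolvent identity $h(z)A^{-s}(h(z)+A^s)^{-1}=A^{-s}-(h(z)+A^s)^{-1}$ to peel $A^sg$ off the data in the second estimate. The first stability estimate, the continuity/uniqueness assertions, and the operator part of the second estimate (where $|h(z)|^{-1}\|H(z)\|\leq c|z|^{2\beta-\alpha-1}$ yields $t^{-m+\alpha-2\beta}\|A^sg\|$) all go through as you describe.

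The gap sits exactly where you flagged the "delicate point", and your sketch does not close it. After the peeling, the scalar remainder is $\frac{\mu}{2\pi i}\int_{\Gamma_{1/t,\pi-\theta}}z^{m+\beta-1}(z^\beta+\lambda)^{-1}e^{zt}g\,dz$, i.e.\ essentially $\tfrac{d^m}{dt^m}E_\beta(-\lambda t^{\beta})\,g$. Handling it "through the explicit Mittag--Leffler scalar integral", as you propose, gives a contribution of size $ct^{\beta-m}\|g\|\leq ct^{\beta-m}\|A^sg\|$ near $t=0$, and $t^{\beta-m}\leq Ct^{-m+\alpha-2\beta}$ on $(0,T]$ only when $3\beta\geq\alpha$; for $\beta<\alpha/3$ the stated exponent does not follow from your plan. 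The paper instead disposes of this term by asserting that $\int_{\Gamma_{1/t,\pi-\theta}}z^{m+\beta-1}(z^\beta+\lambda)^{-1}e^{zt}\,dz=0$ for $m\geq 1$ because the integrand has no singularity on the contour --- an assertion you would have to prove rather than assume, and one that deserves scrutiny: expanding $(z^\beta+\lambda)^{-1}$ and integrating term by term gives $\sum_{j\geq 1}(-\lambda)^{j}t^{j\beta-m}/\Gamma(j\beta-m+1)$, whose leading term is precisely the $t^{\beta-m}$ above and is not identically zero. So the scalar remainder is the one step where your argument and the paper's genuinely diverge, and in your write-up it is the step that remains open.
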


\begin{proof}
By Lemma \ref{es1}, we have
$$
\|(h(z)I+A^s)^{-1}\|\leq M/|h(z)|, ~~~z\in\Sigma_{\pi-\theta},
$$
and we deduce   that
$$
\|H(z)\|\leq cM \min\{ |z|^{-1},~|z|^{\beta-1}\}~~~z\in\Sigma_{\pi-\theta}.
$$
In view of \cite[Theorem 2.1]{Pruss},  there exists a unique
solution $u$ which is given by
$$
u(t)=S(t)g,~~g\in H.
$$

Now, let $\kappa=1/t$ for $t>0$. For $\nu=0$ and $m\in\mathbb N$, from $\|H(z)\|\leq c/|z|$ we have
\begin{align*}
\|S^{(m)}(t)\|=&\left\| \frac{1}{2\pi i}\int_{\Gamma_{1/t,\pi-\theta}}z^me^{zt}H(z)dz\right\| 
\leq    \frac{1}{2\pi  }\int_{\Gamma_{1/t,\pi-\theta}}|z|^m e^{{\rm Re}(z)t}\|H(z)\| |dz|\\
\leq &c\left(\int_{1/t}^\infty  r^{m-1} e^{-rt\cos(\theta)}dr+\int_{-\pi+\theta}^{\pi-\theta} e ^{\cos(\psi)}t^{-m}dt\right)\leq  c t^{-m}.
\end{align*}
For $\nu=1$ and $m\in\mathbb N$, we note that 
$$
A^sH(z)=\mu^{-1}z^{\alpha-1}-h(z)H(z),
$$
it follows  that
$$
\|A^sH(z)\|\leq  \mu^{-1}|z|^{\alpha-1}+\|h(z)H(z)\|\leq c |z|^{\alpha-1}.
$$
We thus obtain that
\begin{align*}
\|A^s S^{(m)}(t)\|=& \left\| \frac{1}{2\pi i}\int_{\Gamma_{1/t,\pi-\theta}}z^me^{zt}A^sH(z)dz\right\|\\ \leq  &
c\left(\int_{1/t}^\infty  r^{m+\alpha-1} e^{-rt\cos(\theta)}dr+\int_{-\pi+\theta}^{\pi-\theta} e ^{\cos(\psi)}t^{-m-\alpha}dt\right)\leq c t^{-m-\alpha}.
\end{align*}

 For $\nu=0$ and $m\in\mathbb N$, we notice that
\begin{align*}
 S^{(m)}(t)g =&  \frac{1}{2\pi i}\int_{\Gamma_{1/t,\pi-\theta}}z^me^{zt}H(z)gdz \\
=& \frac{\mu}{2\pi i}\int_{\Gamma_{1/t,\pi-\theta}}z^{m+\alpha-1}h(z)^{-1}e^{zt}gdz- \frac{1}{2\pi i}\int_{\Gamma_{1/t,\pi-\theta}}z^{m }h(z)^{-1}e^{zt}H(z)A^s gdz,
\end{align*}
where we have used 
$$
h(z)A^{-s}(h(z)+A^s)^{-1}=A^{-s}-(h(z)+A^s)^{-1}.
$$
Observe that 
$$\frac{\mu}{2\pi i}\int_{\Gamma_{1/t,\pi-\theta}}z^{m -1}h(z)^{-1}e^{zt}gdz=0,$$ for $m\geq 1$, and then
\begin{align*}
 \frac{\mu}{2\pi i}\int_{\Gamma_{1/t,\pi-\theta}}z^{m+\alpha-1}h(z)^{-1}e^{zt}gdz= \frac{\mu}{2\pi i}\int_{\Gamma_{1/t,\pi-\theta}}z^{m+\beta-1}(z^\beta+\lambda)^{-1}e^{zt}gdz=0,
\end{align*}
since $|z^\beta+\lambda|>c|z|^\beta$ from Lemma \ref{es1}, and there is no singular point at counter $\Gamma_{1/t,\pi-\theta}$. It follows from $|h(z)|^{-1}\|H(z)\|\leq c|z|^{2\beta-\alpha-1}$ that
\begin{align*}
 \|S^{(m)}(t)g\|\leq &  \frac{1}{2\pi i}\int_{\Gamma_{1/t,\pi-\theta}}|z|^{m }|h(z)|^{-1}e^{{\rm Re }(z)t}\|H(z)\||dz| \|A^s g\|\\
\leq  &
c\left(\int_{1/t}^\infty  r^{m+2\beta-\alpha-1} e^{-rt\cos(\theta)}dr+\int_{-\pi+\theta}^{\pi-\theta} e ^{\cos(\psi)}t^{-m+\alpha -2\beta }dt\right)\|A^s g\|\\ \leq& c t^{-m+\alpha-2\beta }\|A^s g\|.
\end{align*}
The same way with $\nu= 1$ replaced $g$ by $A^s g$ in the case $\nu=0$.
 
\end{proof}

In particular, from the interpolation property, we have
\begin{equation}\label{inter}
\|A^{s\vartheta}H(z)\|\leq c |z|^{\alpha\vartheta-1},~~z\in\Sigma_{\pi-\theta},~\vartheta\in[0,1].
\end{equation}
Hence, we also have the following estimate.
\begin{remark}
For any $g\in H$, and $f\equiv 0$ and $\gamma\equiv0$, the solution $u=S(t)g$ to problem (1.1) satisfies
$$
\|S^{(m)}(t)g\|_{\dot{H}^q(D)}\leq ct^{-m-\alpha(q-p)/2}\|g\|_{\dot{H}^p(D)}
$$
for $1\leq p\leq q\leq 2$ whenever $m=0$ or $1\leq p,q\leq 2$ whenever $m\geq1$.
\end{remark}

\begin{remark}
For $\nu=1$ and $m=0$, we know that the bounded in Theorem \ref{Thm1} is sharp for $t>0$ since bound $\|A^sH(z)\|\leq c|z|^{\alpha-1}$, that is
$$
\|A^s S(t)g\|\leq ct^{-\alpha}\|g\|,~~~g\in H,~~t>0.
$$
On the other hand, a similar proof in Theorem \ref{Thm1} shows from $\mathcal L(  k_{ \varsigma}* S ' (t)  )(z) =z^{1-\varsigma}H(z)$ the Laplace transform of $k_{ \varsigma}* S '(t)$ that $
\| k_{ \varsigma}* A^{s\nu} S ' (t)\|\leq c t^{\varsigma-1-\nu\alpha}$, $t>0,$ for $\varsigma=\beta $, $\nu=0 $ and $\varsigma=\alpha $, $\nu=1$, where $*$ stands for the convolution in $t$.
This bound together with $\nu=0$ and $m=1$, the derivative 
$$k_{ \varsigma}* A^{s\nu} S ' (t)g=d/dt k_{ \varsigma}* A^{s\nu} S  ( t)g-k_{ \varsigma}(t)g,$$  imply a priori estimate for the solution under the $f\equiv 0$ and $\gamma\equiv0$ as  follows
$$
t\|\partial_tu\|+t^{1-\beta }\|\partial_t^{1-\beta}u\| +t^{1-\alpha} \|\partial_t^{1-\alpha} u\|_{\dot{H}^s(D)}\leq c\|g\|,~~~{\rm for~large}~~t>0.
$$

\end{remark}
\subsection{Eigenfunction expansion of solution}
By using eigenfunction expansion, the solution of 
problem (1.1) can rewrite as
\begin{align*}
u(x,t)=&\int_D F(t,x,y)g(y)dy+\int_0^t \int_D F(t-\tau,x,y)f(u)d\tau dy\\ &+\int_0^t \int_D F(t-\tau,x,y)\gamma(\tau)\xi^{H_1,H_2} (dy,d\tau),
\end{align*}
where $F(t,x,y)$ is defined by
$$
F(t,x,y)=\sum_{k=1}^\infty u_k(t )e_k(x)e_k(y),
$$
and
$$
 u_k(t )=\frac{1}{2\pi i}\int_{\Gamma_{\kappa,\pi-\theta}}e^{zt}\mu^{-1}z^{\alpha-1}(h (z) + \rho_k^s)^{-1} dz,
$$ 
is the unique solution to the following equation:
$$
u_k'(t)+ \lambda \partial^{1-\beta}_{t} u_k(t) +\mu \rho_k^s\partial^{1-\alpha} _{t}  u_k(t)=0,~~~u_k(0)=1.
$$
For a continuous function $u(t)$ for $t>0$ with finite $u(0)$, from the Laplace transform of fractional derivative
$$\mathcal L(\partial_t^{1-\varsigma} u)(z)=z^{1-\varsigma}\hat{u}(z),$$
 for $\varsigma=\beta,\alpha$, we get
$$
\hat{u}_k(z)=\frac{z^{\alpha-1}}{z^\alpha+\lambda z^{\alpha-\beta}+\mu\rho_k^s }.
$$

In view of the inverse Laplace transform, we let
	$$u_k(t):= \frac{1}{{2\pi i}}\int_{Br} {{e^{zt}}\frac{z^{\alpha-1}}{z^\alpha+\lambda z^{\alpha-\beta}+\mu\rho_j^s } d z}, $$
where $Br = \{ z:{\rm Re} \,z = \sigma ,\sigma  > 0\} $ is the Bromwich path.

To obtain the integral representation of \( u_k \), we can transform the Bromwich path of integration \( Br \) into the equivalent Hankel path \( Ha \). Note that the integrand function in the integral has a branch point at zero; thus, we cut off the negative part of the real axis.
Furthermore, let $z= r{e^{i \theta }}$, for $r > 0, \theta  \in ( \alpha\omega,\pi )$, we have
\begin{align*}&\Im \{z^{1-\alpha}(z^\alpha+\lambda z^{\alpha-\beta}+\mu\rho_k^s)\} \\
 =& r \sin \left(\theta\right)  +\lambda {r^{1-\beta} }\sin\left((1- \beta )\theta \right)+\mu\rho_kr^{1-\alpha}\sin((1-\alpha)\theta) \ne 0,
 \end{align*}
for $\theta  \ne 0$  and $\lambda \geq0$, and $\sin (\alpha\theta) $, $\sin ((\alpha-\beta) \theta) $ and $\sin((1-\alpha)\theta)$ have the same sign, this allows
 $z^\alpha+\lambda z^{\alpha-\beta}+\mu\rho_j^s$ admitting no zero in the main sheet of the Riemann surface including its boundaries on the cut. Consequently, the function \( u_k(t) \) can be determined by deforming the Bromwich path into the Hankel path \( Ha(\varepsilon) \). This path starts from \( -\infty \) along the lower side of the negative real axis, encircles the disk \( |z| = \varepsilon \) counterclockwise, and ends at \( -\infty \) along the upper side of the negative real axis. As \( \varepsilon \to 0 \), we obtain
		\begin{equation*}
			\begin{aligned}	
				{u_k}(t) 
				=& \frac{1}{\pi }\int_0^\infty  e^{ - rt}r^{\alpha-1}
                \frac{a(r)\sin( \alpha\pi)-b(r)\cos( \alpha\pi)}{ a^2 (r)+b^2 (r)} d r\\
                =& \frac{1}{\pi }\int_0^\infty  e^{ - rt}r^{\alpha-1}
                K_{k}(r) d r,
			\end{aligned}
		\end{equation*}
where
$$
K_k(r)=\frac{\lambda r^{\alpha-\beta}\sin(\beta\pi)+\mu\rho_k^s\sin(\alpha\pi)}{ a^2 (r)+b^2 (r)}>0,
$$
and
$$
a(r)= r^\alpha\cos ( \alpha \pi) + \lambda {r^{\alpha-\beta} }\cos ((\alpha-\beta)\pi) +\mu {\rho_k^s },
$$
$$
b(r)=r^\alpha \sin (\alpha\pi)+\lambda r^{\alpha-\beta}\sin((\alpha-\beta)\pi).
$$

By applying the Laplace transform properties to $
\hat{u}_k$, we obtain
    $$u_k(0) = \mathop {\lim }\limits_{z  \to  + \infty }z{{\hat u}_k}(z ) = 1.$$
Recall that a function $u(t)$ is said to be completely monotone if and only if
$$
(-1)^nu^{(n)}(t)\geq0,~~{\rm for~all}~t\geq0,~~n=0,1,\cdots.
$$
 \begin{lemma}\label{lemma-3.1}
For $t\geq 0$, the functions $u_k(t)$ are continuous for each $k=1,2, \cdots $, and they possess the properties:
		\begin{enumerate}
    \item [{\rm (i)}] For all $t \geq 0$,  $0 < {u_k}(t) \leq  {u_k}(0)=1 $.
     \item [{\rm (ii)}]  ${u_k}(t)$ is completely monotone for all $t\geq0$.
    \item [{\rm (iii)}] For all $t\geq 0$, $\lambda\geq0$, there exist constants $c_2\geq 1,c_1>0$ such that
		\begin{equation*}
			c_1e^{-t}\leq \rho_k^s{u_k}(t) \leq  \frac{c_2\rho_k^s}{1+\lambda t^{ \beta}+\rho_k^s t^\alpha}.
		\end{equation*}	
 	\item [{\rm (iv)}] For all $0< \sigma < t < \infty$, $\vartheta\in [0,1]$ there holds $\left|u_k'(t)\right|  
 	\leq c \rho_k^{-s\vartheta} t^{-\alpha\vartheta-1}.$ Moreover, for $h\in(0,t)$, $\xi\in[0,1]$, it holds
		$$\left| {{u_k}(t) - {u_k}(t-h)} \right| \leq c \rho_k^{-s\vartheta} h^\xi (t-h)^{-\xi-\alpha\vartheta}.$$
\end{enumerate}
\end{lemma}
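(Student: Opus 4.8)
The plan is to extract all four properties from the two representations of $u_k$ already available: the contour integral $u_k(t)=\frac{1}{2\pi i}\int_{Br}e^{zt}\hat u_k(z)\,dz$ with $\hat u_k(z)=z^{\alpha-1}/(z^\alpha+\lambda z^{\alpha-\beta}+\mu\rho_k^s)$, and the real Laplace-type integral $u_k(t)=\frac{1}{\pi}\int_0^\infty e^{-rt}r^{\alpha-1}K_k(r)\,dr$ with $K_k(r)>0$. The technical engine for the quantitative parts is the sectorial lower bound
$$
|z^{\alpha}+\lambda z^{\alpha-\beta}+\mu\rho_k^s|\ \ge\ c\,\bigl(|z|^{\alpha}+\lambda|z|^{\alpha-\beta}+\rho_k^s\bigr),\qquad z\in\Sigma_{\pi-\theta}.
$$
Because $z^\alpha$, $\lambda z^{\alpha-\beta}$ and the positive number $\mu\rho_k^s$ all lie in a common sector about the positive real axis of half-angle strictly less than $\pi/2$ (the geometric content of Lemma \ref{es1}, which places $h(z)$ in $\Sigma_{\pi-\theta}$ with $\pi-\theta<\pi/2$), projecting onto the sector bisector yields this inequality with $c$ independent of $k$ and $z$. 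Properties (ii) and (i) then need only the real integral: differentiating under the integral sign gives $(-1)^n u_k^{(n)}(t)=\frac{1}{\pi}\int_0^\infty r^{n+\alpha-1}e^{-rt}K_k(r)\,dr\ge0$, which is complete monotonicity; strict positivity of the integrand gives $u_k(t)>0$, while the case $n=1$ shows $u_k$ is nonincreasing, so together with the known value $u_k(0)=1$ we obtain $0<u_k(t)\le 1$.

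For the two-sided bound (iii) I would rescale $w=zt$ in the contour integral, turning it into
$$
u_k(t)=\frac{1}{2\pi i}\int_{\Gamma_{1,\pi-\theta}}e^{w}\,\frac{w^{\alpha-1}}{w^{\alpha}+\lambda t^{\beta}w^{\alpha-\beta}+\mu\rho_k^s t^{\alpha}}\,dw.
$$
Applying the denominator estimate (now with parameters $\lambda t^\beta$ and $\rho_k^s t^\alpha$) and splitting $\Gamma_{1,\pi-\theta}$ into the arc $|w|=1$ and the two rays $|w|\ge1$, the elementary inequality $\frac{r^{\alpha-1}(1+B)}{r^\alpha+B}\le 2$ for $r\ge1$, $B\ge0$, combined with the $e^{\mathrm{Re}\,w}$ decay on the rays, delivers the upper bound $u_k(t)\le c_2/(1+\lambda t^\beta+\rho_k^s t^\alpha)$. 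For the lower bound I switch to the real integral, keep only $r\in[0,1]$, use $e^{-rt}\ge e^{-t}$ there, bound $a^2(r)+b^2(r)\le c\rho_k^{2s}$ on $[0,1]$ (using $\rho_k\ge\rho_1>0$), and drop the nonnegative $\lambda$-term in the numerator of $K_k$ to get $K_k(r)\ge c\rho_k^{-s}$; integrating $r^{\alpha-1}$ over $[0,1]$ then yields $\rho_k^s u_k(t)\ge c_1 e^{-t}$.

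Part (iv) comes from $u_k'(t)=\frac{1}{2\pi i}\int_{\Gamma_{1/t,\pi-\theta}}z\,e^{zt}\hat u_k(z)\,dz$. The interpolation inequality $\frac{X}{X+Y}\le (X/Y)^\vartheta$ for $\vartheta\in[0,1]$, applied with $X=|z|^\alpha$ and $Y=\rho_k^s$, converts the denominator estimate into $|z\hat u_k(z)|\le c\rho_k^{-s\vartheta}|z|^{\alpha\vartheta}$, and the standard contour estimate $\int_{\Gamma_{1/t,\pi-\theta}}|z|^{\alpha\vartheta}e^{\mathrm{Re}(z)t}\,|dz|\le c\,t^{-\alpha\vartheta-1}$ (exactly as in the proof of Theorem \ref{Thm1}) gives $|u_k'(t)|\le c\rho_k^{-s\vartheta}t^{-\alpha\vartheta-1}$. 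For the increment, the case $\xi=1$ follows from $|u_k(t)-u_k(t-h)|\le\int_{t-h}^t|u_k'(\tau)|\,d\tau$ together with $\tau\ge t-h$; the case $\xi=0$ follows from the triangle inequality and the upper bound of (iii) rewritten, via the same interpolation trick, as $u_k(t)\le c\rho_k^{-s\vartheta}t^{-\alpha\vartheta}$; and general $\xi\in[0,1]$ follows by taking the $\xi$ power of the first bound times the $(1-\xi)$ power of the second, which produces precisely $c\rho_k^{-s\vartheta}h^\xi(t-h)^{-\xi-\alpha\vartheta}$.

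The step I expect to be most delicate is making the denominator estimate uniform in $k$ and in $z$ along the entire contour, including after the $t$-dependent rescaling, since every quantitative bound inherits its constant from it. The lower bound in (iii) is the other point requiring care, as it rests on the \emph{opposite} (upper) estimate $a^2(r)+b^2(r)\le c\rho_k^{2s}$ holding only on the bounded interval $r\in[0,1]$ and on keeping the $\rho_k^s$ scaling exact so that the final constant $c_1$ is genuinely independent of $k$.
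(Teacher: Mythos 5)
Your proposal is correct and, for parts (i)--(ii), the lower bound in (iii), and the derivative estimate in (iv), it follows essentially the same route as the paper: complete monotonicity and positivity from differentiating the real Hankel-path integral with $K_k(r)>0$, the lower bound by restricting to $r\in[0,1]$ with $e^{-rt}\ge e^{-t}$ and the upper bound $a^2+b^2\le c\rho_k^{2s}$ there, and $|u_k'(t)|\le c\rho_k^{-s\vartheta}t^{-\alpha\vartheta-1}$ from the contour representation of $u_k'$ combined with the interpolated resolvent bound $\frac{X}{X+Y}\le (X/Y)^\vartheta$ (the paper's estimate (\ref{inter2})). You deviate in two sub-steps, and both alternatives are sound. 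For the upper bound in (iii), the paper stays with the real integral and uses $0<K_k(r)\le (\lambda r^{\alpha-\beta}\sin(\beta\pi)+\mu\rho_k^s\sin(\alpha\pi))^{-1}$ to get the two bounds $ct^{-\beta}/\lambda$ and $ct^{-\alpha}/\rho_k^s$ separately, then combines them with $u_k\le 1$; your rescaling $w=zt$ plus the sectorial lower bound on the symbol produces the combined bound $c_2/(1+\lambda t^\beta+\rho_k^s t^\alpha)$ in one stroke and, unlike the paper's Lemma \ref{es1} (which assumes $\lambda>0$), handles $\lambda=0$ without a separate case --- at the cost of having to justify the denominator estimate uniformly after rescaling, which your sector-projection argument ($\mathrm{Re}\sum w_j\ge\cos(\phi_0)\sum|w_j|$ for terms in a common sector of half-angle $\phi_0=\alpha(\pi-\theta)<\pi/2$) does correctly. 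For the increment in (iv), the paper inserts $|e^{zh}-1|\le ch^\xi|z|^\xi$ into the contour integral, whereas you interpolate the target between the endpoint cases $\xi=1$ (integrating $|u_k'|$ over $[t-h,t]$) and $\xi=0$ (triangle inequality with $|u_k(t)|\le c\rho_k^{-s\vartheta}t^{-\alpha\vartheta}$), writing $A\le B_1^{\xi}B_0^{1-\xi}$; this reproduces exactly $c\rho_k^{-s\vartheta}h^\xi(t-h)^{-\xi-\alpha\vartheta}$ and is arguably more elementary, though it yields the same (non-sharp in constants) result.
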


\begin{proof}
From the definition of $u_k(t)$ with Hankel path, ${u_k}(t)$ is completely monotone for $t> 0$ obviously, and then ${u_k}(t)$ is strictly monotonically decreasing for $t \geq 0$ and for each $k \in \mathbb{N}$, which means that $0 < {u_k}(t) \leq {u_k}(0).$ Hence,  the first assertion follows from the property of the Laplace transform.

The integrated  $K_k(r)$ is positive and bounded for all $r\geq0$. Indeed, according to
\begin{align*}
   a(r)\sin(\alpha\pi)+b(r)\cos(\alpha\pi)=\lambda r^{\alpha-\beta}\sin(\beta\pi)+\sin(\alpha\pi)\mu\rho_k^s>0,
\end{align*}
we can assert that
\begin{align*}
(\lambda r^{\alpha-\beta}\sin(\beta\pi)+\sin(\alpha\pi)\mu\rho_k^s)^2
\leq& a^2(r)+b^2(r),
\end{align*}
by applying to the definition of $K_k$. We thus deduce that
$$
0<K_k(r)\leq \frac{1}{\lambda r^{\alpha-\beta}\sin(\beta\pi)+\sin(\alpha\pi)\mu\rho_k^s}.
$$
For all $t > 0$, still by applying the above estimate we have
\begin{align*}
{u_k}(t) \leq &
\frac{1}{\pi }\int_0^\infty  e^{ - rt}r^{\alpha-1}
\frac{1}{ \lambda r^{\alpha-\beta} \sin(\beta\pi)+\sin(\alpha\pi)\mu\rho_k^s} d r\\
\leq &\min\left\{ \frac{\Gamma(\beta)}{ \lambda\pi \sin(\beta\pi)}t^{-\beta}, \frac{\Gamma(\alpha)}{\sin(\alpha\pi)\pi \mu\rho_k^s}t^{-\alpha}\right\}. 	
\end{align*}
Therefore, together with $0<u_k(t)\leq 1$ imply
$$
0<u_k(t)\leq \frac{c_2}{1+\lambda t^\beta+\rho_k^s t^\alpha}, \ \ t\geq0.
$$
 
On the other hand, we get
$$
a^2(r)+b^2(r) \leq 3(r^{2\alpha}+\lambda^2r^{2(\alpha-\beta)}+\mu^2\rho_k^{2s}).
$$
The left hand side estimate of $u_k$ can reads
\begin{align*}
u_k(t)\geq &\frac{1}{3\pi}\int_0^\infty e^{-rt}r^{\alpha-1}\frac{\mu\rho_k^s\sin(\alpha\pi)}{r^{2\alpha}+\lambda^2r^{2(\alpha-\beta)}+\mu^2\rho_j^2}dr\\
\geq & \frac{\sin(\alpha\pi)}{3\mu\rho_k^s\pi}\int_0^\infty e^{-rt}r^{\alpha-1}\frac{1}{\frac{r^{2\alpha}+\lambda^2r^{2(\alpha-\beta)}}{\mu^2\rho_1^{2s}}+1}dr\geq \frac{c_1}{\rho_k^s}e^{-t},
\end{align*}
where we have used
\begin{align*}
\int_0^\infty e^{-rt}r^{\alpha-1}\frac{1}{\frac{r^{2\alpha}+\lambda^2r^{2(\alpha-\beta)}}{\mu^2\rho_1^{2s}}+1}dr
\geq &e^{-t}\int_0^1 \frac{\mu^2\rho_1^2}{ r^{2\alpha}+\lambda^2r^{2(\alpha-\beta)} +\mu^2\rho_1^{2s}}dr 
=: \frac{3c_1\mu\pi\rho_k^s}{\sin(\alpha\pi)}e^{-t}.
\end{align*}
Then the third assertion follows. 
By the interpolation from (\ref{inter}), we get 
\begin{equation}\label{inter2}
|(h(z)+\rho_k^s)^{-1}|\leq c\rho_k^{-s\vartheta}|z|^{-\alpha(1-\vartheta)},~~z\in\Sigma_{\pi-\theta},~~\vartheta\in[0,1],
\end{equation}
this together with
\begin{equation*}
\begin{aligned}	
u_k'(t)
        =&  \frac{1}{2\pi i }\int_{\Gamma_{\kappa,\pi-\theta}} e^{z t} \mu^{-1}z^\alpha (h(z)+\rho_k^s)^{-1}dz, 
			\end{aligned}
		\end{equation*}
imply that for $t>0$
\begin{equation*}
			\begin{aligned}
				\left|u_k'(t)\right|  
			\leq c\rho_k^{-s\vartheta}\int_{\Gamma_{\kappa,\pi-\theta}} |e^{z t}| |z|^{\alpha\vartheta}  |dz|\leq c \rho_k^{-s\vartheta} t^{-\alpha\vartheta-1}.
			\end{aligned}
\end{equation*}
Moreover, by using the fact $|e^{zh}-1|\leq c h^\xi |z|^\xi$ for $z\in\Sigma_{\pi-\theta}$, $\xi\in[0,1]$, for $h\in (0,t)$ we get
\begin{equation*}
	\begin{aligned}
\left| {{u_k}(t) - {u_k}(t-h)} \right| \leq  
 & \frac{1}{2\pi \mu  }\int_{\Gamma_{\kappa,\theta}} |e^{z (t-h)}| |e^{zh}-1|  |z^{\alpha-1}(h(z)+\rho_k^s)^{-1}| |dz|\\ \leq & c\rho_k^{-s\vartheta} h^\xi \int_{\Gamma_{\kappa,\pi-\theta}} |e^{z(t-h)}| |z|^{\xi+\alpha\vartheta-1}  |dz| 
 \leq  c \rho_k^{-s\vartheta} h^\xi (t-h)^{-\xi-\alpha\vartheta}.
	\end{aligned}
\end{equation*}
Then  (iv) follows.
The proof is completed.
\end{proof}
 
By an abstraction process in $x$ for 
$$
S(t)u(x)=\int_D F(t,x,y)u(y)dy,
$$
 the solution can also be written as
$$
u( t)= S(t )g +\int_0^t S(t-\tau )f(u)d\tau+\int_0^t \int_D F(t-\tau,x,y)\gamma(\tau)\xi^{H_1,H_2} (dy,d\tau) .
$$
\subsection{Existence and uniqueness}
Suppose that nonlinear $f$ possesses the following assumptions
\begin{equation}\label{f}
\begin{aligned}
\|f( u)\|\leq c(1+\|u\|),\\
\|f(u)-f(v)\|\leq c\|u-v\|.
\end{aligned}
\end{equation}

The next lemma is the It\^{o} isometry of fractional Browian sheet noise on one dimension.
\begin{lemma}\cite{Bardina}
	Let $r_1(x,t)=r_{1,1}(x)r_{1,2}(t)$ and $r_ 2(x,t)=r_{2,1}(x)r_{2,2}(t)$ satisfying
	$r_{i,1}(x)\in H_0^{\frac{1-2H_1}{2}}(D)$ and $r_{i,2}(t)\in H_0^{\frac{1-2H_2}{2}}(0,T)$, $i=1,2$. Then,
$$
\mathbb E\left(\prod_{i=1}^2\int_0^T\int_D r_i(x,t)\xi^{H_1,H_2} (dx,dt)  \right)= (L_{H_{1},x}r_{1,1}(x),r_{2,1}(x))_{\mathbb R}  (L_{H_{2},t}r_{1,2}(t),r_{2,2}(t))_{\mathbb R},
$$
where
$$
L_{H_1,x}u(x)=\left\{ 
\begin{aligned}
	&c_{H_1} P.V. \int_{\mathbb R} \frac{u(x)-u(y)}{|x-y|^{2-2H_1}}d\tau ,&& H_1\in(0,\frac{1}{2}),\\
	&u(x), && H_1=\frac{1}{2},
\end{aligned}
\right.
$$
and
$$
L_{H_2,t}u(t)=\left\{ 
\begin{aligned}
	&c_{H_2} P.V. \int_{\mathbb R} \frac{u(t)-u(\tau)}{|t-\tau|^{2-2H_2}}d\tau ,&& H_2\in(0,\frac{1}{2}),\\
	&u(t), && H_2=\frac{1}{2},
\end{aligned}
\right.
$$
for $c_{H_i}=H_i(1-2H_i)$, $i=1,2$ and $P.V.$ stands the principal value integral.
\end{lemma}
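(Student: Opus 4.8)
The plan is to build the Wiener-type stochastic integral against $\xi^{H_1,H_2}$ from its covariance and then exploit the product form of that covariance to split the two-dimensional isometry into a spatial factor and a temporal factor, each of which is a one-dimensional fractional Brownian isometry. First I would recall that for indicator integrands the integral reproduces the sheet, $\int_0^T\int_D \mathbf 1_{[0,x]\times[0,t]}\,\xi^{H_1,H_2}(dy,ds)=W^{H_1,H_2}(x,t)$, so that the prescribed covariance $\mathbb E(W^{H_1,H_2}(x,t)W^{H_1,H_2}(y,s))=R_{H_1}(x,y)R_{H_2}(s,t)$ defines, by bilinear extension, an inner product on the linear span of such indicators. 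The completion of this span is the reproducing kernel Hilbert space $\mathcal H$ of the sheet, and because the kernel factorizes into a purely spatial part $R_{H_1}$ and a purely temporal part $R_{H_2}$, $\mathcal H$ is isometric to the tensor product $\mathcal H_{H_1}\otimes\mathcal H_{H_2}$ of the one-dimensional fractional Brownian spaces. Consequently, for separable integrands $r_i=r_{i,1}\otimes r_{i,2}$ the expectation of the product of integrals factorizes as $\langle r_{1,1},r_{2,1}\rangle_{\mathcal H_{H_1}}\,\langle r_{1,2},r_{2,2}\rangle_{\mathcal H_{H_2}}$.

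It then remains to identify each one-dimensional inner product with the bilinear form $(L_H\,\cdot\,,\cdot)_{\mathbb R}$. For $H=1/2$ the kernel reduces to $R_{1/2}(x,y)=\min\{x,y\}$ on the positive axis, $\mathcal H_{1/2}=L^2$, and $L_{1/2}=\mathrm{Id}$, so the claim is the classical It\^o isometry. For $H\in(0,1/2)$ I would pass to the covariance representation and symmetrize the singular kernel: starting from $\langle f,g\rangle_{\mathcal H_H}$ written through $R_H$, an integration by parts (equivalently, the distributional identity $\partial_x\partial_y R_H(x,y)=-H(1-2H)|x-y|^{2H-2}$) turns the inner product into $\frac{c_H}{2}\int_{\mathbb R}\int_{\mathbb R}\frac{(f(x)-f(y))(g(x)-g(y))}{|x-y|^{2-2H}}\,dx\,dy$, which after swapping $x\leftrightarrow y$ in one half is exactly $(L_{H,x}f,g)_{\mathbb R}$ with $c_H=H(1-2H)$. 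The hypotheses $r_{i,1}\in H_0^{(1-2H_1)/2}(D)$ and $r_{i,2}\in H_0^{(1-2H_2)/2}(0,T)$ are precisely the Gagliardo-seminorm finiteness conditions guaranteeing that these double integrals converge and that the separable integrands lie in $\mathcal H$.

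The main obstacle is the rough regime $H<1/2$: the mixed second derivative of $R_H$ is the non-integrable kernel $|x-y|^{2H-2}$, so the passage from the covariance form to the principal value form cannot be a naive differentiation but must be justified distributionally, and the resulting double integral converges only because of the cancellation in $(f(x)-f(y))(g(x)-g(y))$ supplied by the fractional Sobolev regularity of the integrands. Equivalently, one may argue on the Fourier side, where $L_H$ is the multiplier $c|\xi|^{1-2H}$ and the isometry reads $\langle f,g\rangle_{\mathcal H_H}=c\int_{\mathbb R}|\xi|^{1-2H}\hat f(\xi)\overline{\hat g(\xi)}\,d\xi$; this reformulation makes convergence transparent but shifts the difficulty to matching the spectral constant with $c_H=H(1-2H)$. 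Either way, the crux is the careful handling of the principal value together with the verification that the domain conditions furnish exactly the integrability needed, after which the tensor factorization in the first step delivers the stated product formula. This is the route taken in \cite{Bardina}.
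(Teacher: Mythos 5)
The paper does not prove this lemma at all: it is imported verbatim with the citation \cite{Bardina}, so there is no in-paper argument to compare against. Your sketch --- tensorizing the reproducing-kernel Hilbert space of the sheet via the product covariance $R_{H_1}R_{H_2}$, then identifying each one-dimensional inner product with $(L_H\,\cdot\,,\cdot)_{\mathbb R}$ through the distributional identity $\partial_x\partial_y R_H(x,y)=-H(1-2H)|x-y|^{2H-2}$ and a symmetrization of the singular kernel --- is exactly the standard route of that reference, and I see no gap beyond the technical points you already flag (justifying the principal value in the rough regime and checking that the $H_0^{(1-2H_1)/2}(D)$, $H_0^{(1-2H_2)/2}(0,T)$ hypotheses, with extension by zero, give convergence of the Gagliardo-type double integrals).
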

\begin{lemma}\label{xi-H}\cite{Nie25}
Let $r_1(x,t)=r_{1,1}(x)r_{1,2}(t)$ and $r_ 2(x,t)=r_{2,1}(x)r_{2,2}(t)$ satisfying
$r_{i,1}(x)\in H_0^{\frac{1-2H_1}{2}}(D)$ and $r_{i,2}(t)\in H_0^{\frac{1-2H_2}{2}}(0,T)$, $i=1,2$.. Then,
$$
\mathbb E\left(\prod_{i=1}^2\int_0^T\int_D r_i(x,t)\xi^{H_1,H_2} (dx,dt)  \right)\leq c \prod_{i=1}^2 \|\partial_t^{\frac{1-2H_2}{2}}r_{i,2}(t)\|_{L^2(0,T)}\|r_{i,1}(x)\|_{H_0^\frac{1-2H_1}{2}(D)}.$$
\end{lemma}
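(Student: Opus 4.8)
The plan is to start from the exact It\^o isometry established in the preceding lemma (the Bardina formula), which identifies the expectation with a product of two scalar factors,
$$
\mathbb E\Big(\prod_{i=1}^2\int_0^T\int_D r_i\,\xi^{H_1,H_2}(dx,dt)\Big)=(L_{H_1,x}r_{1,1},r_{2,1})_{\mathbb R}\,(L_{H_2,t}r_{1,2},r_{2,2})_{\mathbb R},
$$
and then to bound each factor separately by a product of fractional Sobolev norms. It therefore suffices to prove two one-variable estimates, one in space and one in time, and to multiply them.

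First I would treat the generic range $H_1,H_2\in(0,1/2)$ and set $\sigma_i:=\tfrac{1-2H_i}{2}$. Observe that the kernel exponent obeys $2-2H_i=1+2\sigma_i$, so each operator $L_{H_i,\cdot}$ is, up to the fixed constant $c_{H_i}$, the one-dimensional fractional Laplacian of order $\sigma_i$. The key step is to symmetrize the principal-value integral inside each inner product: swapping the names of the integration variables and averaging, together with Fubini's theorem, gives
$$
(L_{H_1,x}r_{1,1},r_{2,1})_{\mathbb R}=\frac{c_{H_1}}{2}\int_{\mathbb R}\int_{\mathbb R}\frac{\big(r_{1,1}(x)-r_{1,1}(y)\big)\big(r_{2,1}(x)-r_{2,1}(y)\big)}{|x-y|^{2-2H_1}}\,dx\,dy,
$$
which is precisely the Gagliardo bilinear form associated with the seminorm $|\cdot|_{H_0^{\sigma_1}(D)}$ defined in Section \ref{sect2}; the identical manipulation in the time variable produces the Gagliardo form of order $\sigma_2$ on $(0,T)$.

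Since this bilinear form is symmetric and positive semidefinite, hence a genuine inner product, I would then apply the Cauchy--Schwarz inequality to each factor, obtaining
$$
(L_{H_1,x}r_{1,1},r_{2,1})_{\mathbb R}\le c\,|r_{1,1}|_{H_0^{\sigma_1}(D)}\,|r_{2,1}|_{H_0^{\sigma_1}(D)}\le c\,\|r_{1,1}\|_{H_0^{\sigma_1}(D)}\,\|r_{2,1}\|_{H_0^{\sigma_1}(D)},
$$
and analogously in time, where I would additionally invoke the equivalence $|\cdot|_{H_0^{\sigma_2}(0,T)}\simeq\|\partial_t^{\sigma_2}\cdot\|_{L^2(0,T)}$ between the Gagliardo seminorm and the $L^2$-norm of the fractional derivative. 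Multiplying the space and time estimates and regrouping the four resulting factors into the product over $i=1,2$ yields exactly the claimed bound. The boundary cases $H_i=1/2$ are disposed of directly: there $\sigma_i=0$, $L_{H_i,\cdot}$ reduces to the identity, every norm collapses to the $L^2$-norm, and the estimate is immediate from Cauchy--Schwarz. I expect the main obstacle to be the rigorous treatment of the singular principal-value integral during symmetrization, namely justifying the interchange of integration order and the cancellation of the non-integrable singularity on the diagonal, and then confirming that the two quadratic forms so obtained agree, up to fixed constants, with the fractional-seminorm characterizations appearing on the right-hand side; the hypotheses $r_{i,1}\in H_0^{\sigma_1}(D)$ and $r_{i,2}\in H_0^{\sigma_2}(0,T)$ are exactly what guarantee the finiteness of these seminorms and thus the validity of each step.
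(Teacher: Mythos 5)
The paper itself offers no proof of this lemma: it is imported verbatim from \cite{Nie25}, so there is no in-paper argument to compare yours against. Judged on its own terms, your proposal is correct and follows the natural route one would expect the cited reference to take. Starting from the exact isometry of the preceding lemma, symmetrizing each principal-value inner product into the Gagliardo bilinear form of order $\sigma_i=\tfrac{1-2H_i}{2}$ (the exponent bookkeeping $2-2H_i=1+2\sigma_i$ in dimension one is right), applying Cauchy--Schwarz to that positive semidefinite form, and then invoking the equivalence $\|\partial_t^{\sigma_2}\cdot\|_{L^2(0,T)}\simeq\|\cdot\|_{H_0^{\sigma_2}(0,T)}$ for $\sigma_2<1/2$ (which the paper itself uses via \cite[Lemma 2.5]{Li-Xie} in Remark \ref{Re-2}) yields exactly the stated bound; the degenerate cases $H_i=1/2$ are indeed immediate. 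The only points needing care are the ones you already flag: justifying the symmetrization of the singular integral (standard by density of smooth compactly supported functions and continuity of both sides on $H_0^{\sigma_i}$, which the hypotheses guarantee), and the fact that the claimed inequality has no absolute value on the left, so your two-sided Cauchy--Schwarz bound is in fact slightly stronger than what is asserted. I see no gap.
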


\begin{lemma}\label{deri}
	Let $\vartheta\in[0,1]$ and $\varsigma\in [0,1]$ such that $\varsigma+ \alpha\vartheta<1/2$, then for every $t\in(0,T]$
	$$
	\left\|\partial_\upsilon^{\varsigma} \rho_k^{s\vartheta} u_k(t-\upsilon) \gamma(\upsilon)\right\| _{L^2(0,t)} \leq c.
	$$
\end{lemma}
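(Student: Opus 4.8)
The plan is to reduce the claim to an estimate in the fractional Sobolev (Sobolev--Slobodeckij) norm and then exploit the pointwise bounds on $u_k$ from Lemma~\ref{lemma-3.1}. Write $f(\upsilon)=\rho_k^{s\vartheta}u_k(t-\upsilon)\gamma(\upsilon)$ for $\upsilon\in(0,t)$. Since $\varsigma<1/2$ (recall from the Preliminaries that $H^\varsigma(0,t)=H_0^\varsigma(0,t)$ in this range, so no boundary correction is needed), the $L^2$ norm of the Riemann--Liouville derivative is controlled by the full $H^\varsigma$ norm,
\[
\|\partial_\upsilon^\varsigma f\|_{L^2(0,t)}\le c\Big(\|f\|_{L^2(0,t)}+\Big(\int_0^t\!\!\int_0^t\frac{|f(\upsilon)-f(w)|^2}{|\upsilon-w|^{1+2\varsigma}}\,d\upsilon\,dw\Big)^{1/2}\Big).
\]
It therefore suffices to bound the $L^2$ norm and the Gagliardo seminorm of $f$ by a constant independent of $k$ and of $t\in(0,T]$; the $\rho_k$-dependence will cancel throughout because every estimate on $u_k$ carries the factor $\rho_k^{-s\vartheta}$.

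For the $L^2$ part, I would first interpolate the bound in Lemma~\ref{lemma-3.1}(iii): from $u_k(t)\le c_2/(1+\rho_k^s t^\alpha)\le (\rho_k^s t^\alpha)^{-\vartheta}$ one gets $\rho_k^{s\vartheta}u_k(t-\upsilon)\le c\,(t-\upsilon)^{-\alpha\vartheta}$. Since $\gamma$ is bounded on $[0,T]$ and $2\alpha\vartheta<1$ (as $\alpha\vartheta\le\varsigma+\alpha\vartheta<1/2$), the integral $\int_0^t(t-\upsilon)^{-2\alpha\vartheta}\,d\upsilon$ converges and is bounded uniformly on $(0,T]$, which yields $\|f\|_{L^2(0,t)}\le c$.

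For the Gagliardo seminorm I would split, for $\upsilon<w$,
\[
f(\upsilon)-f(w)=\rho_k^{s\vartheta}\big[u_k(t-\upsilon)-u_k(t-w)\big]\gamma(\upsilon)+\rho_k^{s\vartheta}u_k(t-w)\big[\gamma(\upsilon)-\gamma(w)\big].
\]
The second term is handled by $\gamma\in C^1$, giving $|\gamma(\upsilon)-\gamma(w)|\le c|\upsilon-w|$ together with $\rho_k^{s\vartheta}u_k(t-w)\le c(t-w)^{-\alpha\vartheta}$; the resulting integrand is $\lesssim(t-w)^{-2\alpha\vartheta}|\upsilon-w|^{1-2\varsigma}$, which is integrable since $1-2\varsigma>0$ and $2\alpha\vartheta<1$. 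For the first term I would apply the Hölder-type estimate in Lemma~\ref{lemma-3.1}(iv) with $\tau=t-\upsilon$ and $h=w-\upsilon$, namely $|u_k(t-\upsilon)-u_k(t-w)|\le c\rho_k^{-s\vartheta}(w-\upsilon)^\xi(t-w)^{-\xi-\alpha\vartheta}$ for a free exponent $\xi\in[0,1]$. The corresponding integrand is $\lesssim(w-\upsilon)^{2\xi-1-2\varsigma}(t-w)^{-2\xi-2\alpha\vartheta}$; integrating in $\upsilon$ near the diagonal requires $2\xi>2\varsigma$, and integrating in $w$ near the endpoint $w=t$ requires $2\xi+2\alpha\vartheta<1$.

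The crux --- and the only place the hypothesis $\varsigma+\alpha\vartheta<1/2$ is really used --- is that these two requirements are simultaneously satisfiable: one must choose the intermediate Hölder exponent $\xi$ with $\varsigma<\xi<1/2-\alpha\vartheta$, and such a $\xi$ exists precisely because $\varsigma+\alpha\vartheta<1/2$. With this choice both iterated integrals converge to quantities bounded by fixed powers of $T$, uniformly in $k$ and in $t\in(0,T]$. Collecting the $L^2$ bound and the two seminorm contributions then gives $\|\partial_\upsilon^\varsigma f\|_{L^2(0,t)}\le c$. I expect the delicate point in the write-up to be the careful bookkeeping of the two competing singularities (the diagonal $\upsilon=w$ and the endpoint $w=t$) in the first term, and verifying that the resulting constants are genuinely independent of $k$, which hinges on the exact cancellation of the $\rho_k^{\pm s\vartheta}$ factors.
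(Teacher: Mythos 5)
Your proof is correct, but it takes a genuinely different route from the paper. The paper exploits $\gamma\in C^1$ with $\gamma(0)=0$ to rewrite the Riemann--Liouville derivative as a fractional integral of the time derivative, $\partial_\tau^{\varsigma}\bigl(u_k(t-\tau)\gamma(\tau)\bigr)=I_\tau^{1-\varsigma}\bigl(-u_k'(t-\tau)\gamma(\tau)+u_k(t-\tau)\gamma'(\tau)\bigr)$, bounds the bracket pointwise by $c\rho_k^{-s\vartheta}(t-\tau)^{-\alpha\vartheta-1}$ using Lemma~\ref{lemma-3.1}(iv) and interpolation, and then estimates the resulting convolution directly by Cauchy--Schwarz and Fubini, which forces $\varsigma+\alpha\vartheta<1/2$ through the integrability of $k_{1-\varsigma}^2(\tau-\upsilon)(t-\upsilon)^{-\alpha\vartheta-1}(t-\tau)^{-\alpha\vartheta}$ and yields the explicit rate $t^{1-2(\varsigma+\alpha\vartheta)}$. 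You instead pass to the Gagliardo seminorm via the equivalence $\|\partial_\upsilon^\varsigma f\|_{L^2(0,t)}\lesssim\|f\|_{H^\varsigma(0,t)}$ (valid for $\varsigma<1/2$, and consistent with the identification in Remark~\ref{Re-2}) and use the H\"older estimate of Lemma~\ref{lemma-3.1}(iv) with a free exponent $\xi$; your condition $\varsigma<\xi<1/2-\alpha\vartheta$ is exactly equivalent to the paper's hypothesis, and the two competing singularities you track (diagonal and endpoint $w=t$) are the mirror image of the two factors $(t-\upsilon)^{-\alpha\vartheta-1}$ and $(t-\tau)^{-\alpha\vartheta}$ in the paper's Fubini step. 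Your route has the advantage of not needing $\gamma(0)=0$ and of connecting directly to the Sobolev-space formulation used later in the error analysis; the paper's route is more elementary and gives the sharp $t$-dependence for free. One small point you should make explicit: the constant in the equivalence $\|\partial_\upsilon^\varsigma f\|_{L^2(0,t)}\lesssim\|f\|_{L^2(0,t)}+[f]_{H^\varsigma(0,t)}$ depends on the interval length, and a scaling argument puts a factor $t^{-2\varsigma}$ in front of $\|f\|_{L^2(0,t)}^2$; this is harmless because $\|f\|_{L^2(0,t)}^2\lesssim t^{1-2\alpha\vartheta}$ and $1-2\varsigma-2\alpha\vartheta>0$, but the uniformity in $t\in(0,T]$ claimed in the lemma does require this extra line.
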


\begin{proof}
	The  $\gamma\in C^1(I)$ with  $\gamma(0)=0$ implies that
	\begin{align*}
		\int_0^t \left|\partial_\tau^{\varsigma} u_k(t-\tau)\gamma(\tau)\right|^2 ds= \int_0^t \left| I_\tau^{1-\varsigma } (-u_k'(t-\tau)\gamma(\tau)+u_k(t-\tau)\gamma'(\tau))\right|^2 d\tau .
	\end{align*}
	By (\ref{inter2}), $|u_k'(t)|\leq c\rho_k^{-s\vartheta}(t)^{-\alpha\vartheta-1}$ for all $\vartheta\in[0,1]$, $t>0$, 
	let $$ u_{tk}(\tau)=-u_k'(t-\tau)\gamma(\tau)+u_k(t-\tau)\gamma'(\tau),$$ a.e. $\tau\in[0,t]$,
	together with $|u_k(t)|\leq c\rho_k^{-s\vartheta}(t)^{-\alpha\vartheta}$ by interpolation, and taking $\gamma_\infty= \max_{t\in I}(|\gamma(t)|+|\gamma'(t)|)$, 
	imply 
	$$| u_{tk}|\leq c\rho_k^{-s\vartheta}(t-\tau)^{-\alpha\vartheta-1}, ~~~{\rm for}\ t>\tau.$$
It follows that
 \begin{align*}
 	& \int_0^t \left| I_\tau^{1-\varsigma } u_{tk}(\tau)\right|^2 d\tau 
 	\leq  c^2\rho_k^{-2s\vartheta}\int_0^t \left( \int_0^\tau k_{1-\varsigma }(\tau-\upsilon)   (t-\upsilon)^{-\alpha\vartheta-1}  d\upsilon\right)^2 d\tau ,
 \end{align*}
therefore the Cauchy-Schwarz inequality and Fubini theorem show that  
%
	\begin{align*}
	\int_0^t \left( \int_0^\tau k_{1-\varsigma }(\tau-\upsilon)  (t-\upsilon)^{-\alpha\vartheta-1} d\upsilon\right)^2 d\tau 
	\leq & c\int_0^t  \int_\upsilon^t k_{1-\varsigma}^2(\tau-\upsilon)  (t-\upsilon)^{-\alpha\vartheta-1}   (t-\tau)^{-\alpha\vartheta} d\tau d\upsilon\\
	\leq &c t^{1-2(\varsigma-\alpha\vartheta)} ,
\end{align*}
in view of $\varsigma+\alpha\vartheta<1/2$ for some $\vartheta\in [0,1]$, where the constant $c$ is independent of $k$. Therefore, the desired estimate holds. The proof is complete.
\end{proof}
\begin{remark}\label{Re-2}
	From Lemma \ref{deri}, we know that for every 
$t\in(0,T]$ and $\varsigma\in(0,1/2)$, $I_\tau^{1-\varsigma } u_{tk}(\tau)$ belongs to $L^2(0,t)$. According to  \cite[Lemma 2.5]{Li-Xie}, since 
$y_{tk}(\tau):=u_{ k}(t-\tau)\gamma(\tau)$ for $\tau\in[0,t]$ and $y_{tk}(\tau)\in L^2(0,t)$, it follows that $\partial_\tau^{ \varsigma  } y_{tk}(\tau) $
	is an element of $H^{ \varsigma}(0,t)$. Additionally, the norm in the space 
$H^\varsigma(0,t)$ is given by
	$$
\|y_{tk}(\tau)\|_{H^\varsigma(0,t)}= \left\|\partial_\tau^{\varsigma}  y_{t,k}(\tau)\right\| _{L^2(0,t)} .
$$
\end{remark}
%
%

\begin{theorem}\label{thm2}
Let $f$ satisfy the assumptions (\ref{f}), and   $2s H_2/\alpha+H_1-1>0 $, then there is a unique solution $u$ on $C(I;L^2(D,H))$.
\end{theorem}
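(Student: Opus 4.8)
The plan is to realise the solution as a fixed point of the Duhamel map associated with the mild formulation derived in Section~\ref{sect3}. Working in the Banach space $X=C(I;L^2(\Omega;H))$ equipped with $\|v\|_X=\sup_{t\in I}(\mathbb E\|v(t)\|^2)^{1/2}$, I would define
\[
(\Phi v)(t)=S(t)g+\int_0^t S(t-\tau)f(v(\tau))\,d\tau+Z(t),
\]
where $Z(t)$ denotes the stochastic convolution $\int_0^t\!\int_D F(t-\tau,x,y)\gamma(\tau)\,\xi^{H_1,H_2}(dy,d\tau)$, which does not depend on $v$. A fixed point of $\Phi$ is exactly a mild solution, so the theorem reduces to showing that $\Phi$ is a well-defined self-map of $X$ and admits a unique fixed point.

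The two deterministic terms are routine. From Lemma~\ref{lemma-3.1}(i), $0<u_k(t)\le1$, hence $\|S(t)h\|^2=\sum_k u_k(t)^2(h,e_k)^2\le\|h\|^2$, so $S(t)$ is uniformly bounded on $H$ with $\|S(t)g\|\le\|g\|$; combined with the linear growth bound in (\ref{f}) this gives
\[
\mathbb E\Big\|\int_0^t S(t-\tau)f(v(\tau))\,d\tau\Big\|^2\le c\,t\int_0^t\big(1+\mathbb E\|v(\tau)\|^2\big)\,d\tau,
\]
so both terms stay in $X$. Continuity in $t$ of $S(t)g$ follows from Theorem~\ref{Thm1}, and of the Duhamel term by dominated convergence. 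The entire difficulty is therefore concentrated in the stochastic convolution $Z$.

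For $Z$ I would expand in the eigenbasis and use orthonormality of $\{e_k\}$ to obtain
\[
\mathbb E\|Z(t)\|^2=\sum_{k=1}^\infty \mathbb E\Big(\int_0^t\!\int_D u_k(t-\tau)\gamma(\tau)e_k(y)\,\xi^{H_1,H_2}(dy,d\tau)\Big)^2.
\]
Applying the It\^o isometry bound of Lemma~\ref{xi-H} with $r_{1,2}=r_{2,2}=u_k(t-\cdot)\gamma(\cdot)$ and $r_{1,1}=r_{2,1}=e_k$, then using $\|e_k\|_{H_0^{(1-2H_1)/2}(D)}^2\le c\,\rho_k^{(1-2H_1)/2}$ (by the equivalence $\dot H^q=H_0^q$ for $q\in[0,3/2)$) together with the temporal estimate of Lemma~\ref{deri} with $\varsigma=(1-2H_2)/2$, which yields $\|\partial_\tau^{\varsigma}(u_k(t-\tau)\gamma(\tau))\|_{L^2(0,t)}\le c\,\rho_k^{-s\vartheta}$ whenever $\varsigma+\alpha\vartheta<1/2$, I arrive at
\[
\mathbb E\|Z(t)\|^2\le c\sum_{k=1}^\infty \rho_k^{(1-2H_1)/2-2s\vartheta}.
\]
Since $\rho_k\asymp k^{2}$ in dimension one (Weyl asymptotics), this series converges precisely when $2s\vartheta-(1-2H_1)/2>1/2$, i.e. $\vartheta>(1-H_1)/(2s)$, whereas Lemma~\ref{deri} imposes $\vartheta<H_2/\alpha$. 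Such a $\vartheta$ exists if and only if $(1-H_1)/(2s)<H_2/\alpha$, which is exactly the hypothesis $2sH_2/\alpha+H_1-1>0$. This balancing of the spatial and temporal roughness against the fractional orders is the main obstacle, and it is what pins down the stated condition; the resulting bound on $\mathbb E\|Z(t)\|^2$ is uniform in $t\in I$.

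Time-continuity of $Z$ would follow by the same computation applied to $Z(t+h)-Z(t)$, splitting into the increment $u_k(t+h-\tau)-u_k(t-\tau)$ on $[0,t]$ and the fresh contribution on $[t,t+h]$, and invoking the H\"older bound $|u_k(t)-u_k(t-h)|\le c\,\rho_k^{-s\vartheta}h^\xi(t-h)^{-\xi-\alpha\vartheta}$ from Lemma~\ref{lemma-3.1}(iv) to let $h\to0$; hence $Z\in X$ and $\Phi$ maps $X$ into itself. Finally, since $Z$ cancels in the difference $(\Phi v_1)(t)-(\Phi v_2)(t)=\int_0^t S(t-\tau)(f(v_1)-f(v_2))\,d\tau$, the Lipschitz assumption in (\ref{f}) and $\|S(t)\|\le1$ give
\[
\mathbb E\|(\Phi v_1)(t)-(\Phi v_2)(t)\|^2\le c\,t\int_0^t\mathbb E\|v_1(\tau)-v_2(\tau)\|^2\,d\tau.
\]
Iterating this inequality shows that some power $\Phi^n$ is a contraction on $X$, so the Banach fixed point theorem yields a unique fixed point, that is, a unique $u\in C(I;L^2(\Omega;H))$; uniqueness over the whole interval also follows directly from Gronwall's inequality. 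This completes the plan.
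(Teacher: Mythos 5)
Your proposal is correct and follows essentially the same route as the paper: the same Picard/fixed-point map on $C(I;L^2(\Omega;H))$, the same use of Lemma \ref{xi-H} and Lemma \ref{deri} to bound the stochastic convolution, the same balancing of $\vartheta$ between $(1-H_1)/(2s)$ and $H_2/\alpha$ that produces the hypothesis $2sH_2/\alpha+H_1-1>0$, and the same iterated-contraction conclusion. The only cosmetic difference is that you make the time-continuity of the stochastic convolution via Lemma \ref{lemma-3.1}(iv) explicit, which the paper leaves implicit.
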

\begin{proof}
Let us introduce an operator $P$ on $C(I;L^2(D,H))$ with norm
$$\|u\|_c=\sup_{t\in I}( \mathbb E\|u(t)\|^2)^{1/2},$$
defined by
$$
Pu(t)=S(t )g +\int_0^t S(t-s )f(u)ds+\int_0^t \int_D F(t-s,x,y)\gamma(s)\xi^{H_1,H_2} (dy,ds).
$$
Theorem \ref{Thm1} shows that $Pu\in C(I;L^2(D,H))$. By the Cauchy-Schwarz inequality, 
\begin{align*}
 \mathbb E \|Pu(t)\|^2\leq &  3 \mathbb E \|S(t )g\|^2+ 3\mathbb E \left\| \int_0^t S(t-\tau )f(u)d\tau\right\|^2\\
 &+3\mathbb E \left\|\int_0^t \int_D F(t-\tau,x,y)\gamma(\tau)\xi^{H_1,H_2} (dy,d\tau)\right\|^2\\
 \leq& 3c^2\mathbb E \|g\|^2+3c^2t \int_0^t (1+\mathbb E \left\| u(\tau)\right\|^2) d\tau+3Q.
\end{align*}

As for the estimate $Q$, by Lemma \ref{xi-H}, we have
\begin{align*}
Q\leq& c   \sum_{k=1}^\infty \int_0^t \left|\partial_\tau^{\frac{1-2H_2}{2} } u_k(t-\tau)\gamma(\tau)\right|^2 d\tau\|e_k(y)\|_{H_0^{\frac{1-2H_1}{2}}(D)}^2\|e_k(x)\|^2.
\end{align*}
Together Lemma \ref{deri} and (\ref{inter2}) imply
\begin{align*}
	\int_0^t \left|\partial_\tau^{\frac{1-2H_2}{2} } u_k(t-\tau)\gamma(\tau)\right|^2 d\tau\leq c\rho_k^{-2s\vartheta} ,
\end{align*}
for replacing $\varsigma=\frac{1-2H_2}{2}$ and requiring $H_2>\alpha\vartheta$.
Since the $k$-th eigenvalue of the Dirichlet boundary problem for the Laplace operator $A=-\Delta$  in bounded  domain $D=(0,l)$ is $\rho_k\simeq k^2$,~~$k=1,2,\cdots,$ it follows that 
$$	Q\leq  c   \sum_{k=1}^\infty \rho_k^{-2s\vartheta+\frac{1-2H_1}{2}} <\infty,$$
where we need to require $2s\vartheta>1-H_1$ such that $2s\vartheta=1-H_1+\epsilon$ with small $\epsilon\in(0,2s\vartheta+H_1-1)$ for some $\vartheta\in(\frac{1-H_1}{2s},\frac{H_2}{\alpha})\cap [0,1]$. 

Based on the above estimates, we have verified that \(\sup_{t \in I} \mathbb{E} \|Pu(t)\| < +\infty\) for any \(u \in C(I, L^2(D, H))\). We now proceed to verify that \(P\) has a fixed point. For any \(u_1, u_2 \in C(I, L^2(D, H))\), we have
\begin{align*}
	\mathbb E \|Pu_1(t)-Pu_2(t)\|^2\leq &  t \int_0^t  \mathbb E\left\|S(t-\tau )(f(u_1)-f(u_2))\right\|^2d\tau\\
\leq	& c^2t \int_0^t  \mathbb E\left\| u_1(\tau)-u_2(\tau)\right\|^2d\tau\\
\leq& c^2 t^2 \|u_1-u_2\|^2_c.
\end{align*}
By induction, it is easy to get
$$
\mathbb E \|P^nu_1(t)-P^nu_2(t)\|^2\leq \frac{c^{2n} t^{2n}}{(2n+1)!!} \|u_1-u_2\|^2_c.
$$
Since $(2n+1)!!$ is faster growth than $c^{2n} T^{2n}$ for $1\ll n$, there is a $\hat{n}$ such that
$\frac{c^{2\hat{n}}T^{2\hat{n}}}{(2\hat{n}+1)!!}:=\sigma^2<1$, it holds $ \|P^nu_1 -P^nu_2 \|_c\leq \sigma \|u_1-u_2\| _c. $ The contractility of $P^{\hat{n}}$ follows, and then  $P^{\hat{n}}$ has a unique fixed point $u_*$ belonging to $C(I,L^2(D,H))$. By virtue of $P^{\hat{n}}P=P^{\hat{n}+1}=PP^{\hat{n}}$, one see that
$P^{\hat{n}}(Pu_*)=P(P^{\hat{n}}u_*)=Pu_*$, which deduces that $Pu_*$ is a fixed point of $P^{\hat{n}}$, thus the uniqueness of fixed point implies that $Pu_*=u_*$, this shows that $u_*$ ia also a fixed point of $P$, which is the solution to problem (1.1). The proof is complete. 
\end{proof}

\subsection{Spatial and temporal regularities}
We now show the spatial regularity estimate of solution.
\begin{theorem}\label{thm3}
Let $u$ be the solution to problem (1.1), $g\in \dot{H}^\sigma(D)$ and $f$ satisfy the assumptions (\ref{f}). Let $s>\frac{1- H_1}{2}$, $2sH_2/\alpha+ H_1-1 >0$ and $2\sigma\in [0, \min\{2s+H_1-1,{2sH_2 }/{\alpha}+H_1-1\})$, then
$$
\mathbb E\|  u(t)\|^2_{\dot{H}^\sigma(D)}\leq c.
$$
\end{theorem}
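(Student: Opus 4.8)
The plan is to start from the mild-solution representation
\[
u(t)=S(t)g+\int_0^t S(t-\tau)f(u)\,d\tau+\int_0^t\int_D F(t-\tau,x,y)\gamma(\tau)\xi^{H_1,H_2}(dy,d\tau),
\]
and to estimate $\mathbb E\|A^{\sigma/2}u(t)\|^2$ by treating the three summands separately, since $\|\cdot\|_{\dot H^\sigma(D)}=\|A^{\sigma/2}\cdot\|$. Throughout I would exploit that $A^{\sigma/2}$ acts diagonally on the eigenbasis $\{e_k\}$, so each operator reduces to the scalar multiplier $\rho_k^{\sigma/2}$ and the analysis collapses to weighted sums of the modes $u_k$.

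For the deterministic data term, the diagonal representation $S(t)g=\sum_k u_k(t)(g,e_k)e_k$ together with $0<u_k(t)\le 1$ from Lemma \ref{lemma-3.1}(i) gives $\|S(t)g\|_{\dot H^\sigma(D)}^2=\sum_k\rho_k^\sigma u_k(t)^2(g,e_k)^2\le\|g\|_{\dot H^\sigma(D)}^2$, which is finite by the hypothesis $g\in\dot H^\sigma(D)$. For the nonlinear term I would first upgrade Lemma \ref{lemma-3.1}(iii) by interpolation to the smoothing estimate $\|A^{\sigma/2}S(\tau)v\|\le c\,\tau^{-\alpha\sigma/(2s)}\|v\|$, choosing the interpolation index $\vartheta'=\sigma/(2s)\in[0,1]$ so that the $\rho_k$-weights cancel, and then combine the triangle inequality, the linear growth of $f$ in (\ref{f}), the Cauchy--Schwarz inequality in $\tau$, and the a priori bound $\sup_{\tau\in I}\mathbb E\|u(\tau)\|^2<\infty$ furnished by Theorem \ref{thm2}. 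The only requirement here is $\alpha\sigma/(2s)<1$, which is comfortably implied by $2\sigma<2s+H_1-1$, so the singularity at $\tau=t$ stays integrable.

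The stochastic convolution is the crux, and the main obstacle. Applying $A^{\sigma/2}$ mode by mode and using orthonormality of $\{e_k\}$ in $L^2(D)$, I would reduce its squared $\dot H^\sigma$ norm to $\sum_k\rho_k^\sigma\,\mathbb E[\zeta_k^2]$, where $\zeta_k=\int_0^t\int_D u_k(t-\tau)\gamma(\tau)e_k(y)\xi^{H_1,H_2}(dy,d\tau)$. The It\^o isometry bound of Lemma \ref{xi-H} factorises $\mathbb E[\zeta_k^2]$ into a temporal factor $\|\partial_\tau^{(1-2H_2)/2}u_k(t-\tau)\gamma(\tau)\|_{L^2(0,t)}^2$ and a spatial factor $\|e_k\|_{H_0^{(1-2H_1)/2}(D)}^2$. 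For the temporal factor I would invoke Lemma \ref{deri} with $\varsigma=(1-2H_2)/2$ and an interpolation index $\vartheta$ subject to $\varsigma+\alpha\vartheta<1/2$, i.e. $\vartheta<H_2/\alpha$ and $\vartheta\le1$, yielding the decay $c\,\rho_k^{-2s\vartheta}$; for the spatial factor I would use $\|e_k\|_{H_0^{(1-2H_1)/2}(D)}^2\simeq\rho_k^{(1-2H_1)/2}$ via the identification $\dot H^q(D)=H_0^q(D)$. This leaves the series $\sum_k\rho_k^{\sigma-2s\vartheta+(1-2H_1)/2}$, and since $\rho_k\simeq k^2$ it converges once $\sigma-2s\vartheta+(1-2H_1)/2<-1/2$, i.e. $\sigma<2s\vartheta+H_1-1$.

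The hard part is the balancing act in the last step, exactly the one met in Theorem \ref{thm2}: roughness in time forces $\vartheta<H_2/\alpha$ through Lemma \ref{deri}, while roughness in space demands a large enough negative power of $\rho_k$ to beat the growth $\rho_k^{(1-2H_1)/2}$. Pushing $\vartheta$ up to $\min\{1,H_2/\alpha\}$ converts the convergence condition into the admissibility of some $\vartheta\in\big(\tfrac{\sigma+1-H_1}{2s},\min\{1,H_2/\alpha\}\big)\cap[0,1]$, whose nonemptiness is precisely the smallness of $\sigma$ guaranteed by the hypothesis $2\sigma<\min\{2s+H_1-1,\,2sH_2/\alpha+H_1-1\}$ together with $s>(1-H_1)/2$ and $2sH_2/\alpha+H_1-1>0$. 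Collecting the three bounds then yields $\mathbb E\|u(t)\|_{\dot H^\sigma(D)}^2\le c$ uniformly in $t\in(0,T]$.
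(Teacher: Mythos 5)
Your proposal is correct and follows essentially the same route as the paper: the same three-term mild-solution decomposition, the bound $0<u_k(t)\le 1$ for the data term, the smoothing estimate from the interpolation bound for the nonlinear term, and for the stochastic convolution the combination of Lemma \ref{xi-H}, Lemma \ref{deri} with $\varsigma=(1-2H_2)/2$, the asymptotics $\|e_k\|^2_{H_0^{(1-2H_1)/2}(D)}\simeq\rho_k^{(1-2H_1)/2}$ and $\rho_k\simeq k^2$, together with the same balancing of the interpolation index $\vartheta$ between the constraints $\alpha\vartheta<H_2$ and $2s\vartheta>\sigma+1-H_1$. The only differences are cosmetic: you invoke the a priori bound from Theorem \ref{thm2} directly where the paper closes with a Gronwall--Henry argument, and your power-of-$\rho_k$ bookkeeping (using $\rho_k^{\sigma}$ for the squared $\dot H^\sigma$ norm) is in fact the cleaner accounting, yielding a condition on $\sigma$ that is implied by the stated hypothesis on $2\sigma$.
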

 
\begin{proof}
	
From Theorem \ref{thm2}, we have
\begin{align*}
\mathbb E\|  u(t)\|^2_{\dot{H}^\sigma(D)}\leq& 3 \mathbb E \| S(t )g\|^2_{\dot{H}^\sigma(D)}+ 3\mathbb E \left\| \int_0^t  S(t-\tau )f(u)d\tau\right\|^2_{\dot{H}^\sigma(D)}\\
&+3\mathbb E \left\|\int_0^t \int_D   F(t-\tau,x,y)\gamma(\tau)\xi^{H_1,H_2} (dy,d\tau)\right\|^2_{\dot{H}^\sigma(D)}\\
\leq& 3Q_1+3Q_2+3Q_3.
\end{align*}
To estimate $Q_1$, we note that  $|u_k(t )|\leq 1 $ and then
\begin{align*}
Q_1 =\mathbb E\left(  \sum_{k=1}^\infty \rho_k^{2 \sigma}|u_k(t)|^2|g_k|^2\right)\leq  \mathbb E\|g\|^2_{\dot{H}^\sigma(D)}.  
\end{align*}

By (\ref{inter2}) and the assumption of $f$, the Cauchy-Schwarz inequality implies
\begin{align*}
Q_2 \leq&  \mathbb E \left\| \int_{\Gamma_{\kappa,\pi-\theta}} e^{zt} \rho_k^\sigma \hat{u}_k(z) \hat{f}(u)dz\right\|^2  
\leq	 c \left (\int_0^t (t-\tau)^{-\frac{\alpha\sigma}{s}}\mathbb E  \| f(u(\tau))\|d\tau  \right)^2\\
\leq	&c  \int_0^t (t-\tau)^{-\frac{2\alpha\sigma}{s}+1-\epsilon}(1+\mathbb E  \| u(\tau)\|^2)d\tau  \\
\leq &c \left(1+  \int_0^t (t-\tau)^{-\frac{2\alpha\sigma}{s}+1-\epsilon} \mathbb E  \| u(\tau)\|^2 d\tau\right),
\end{align*}
where $\hat{f}$ is the Laplace transform of $f$, we also require $-2\alpha\sigma/s+1>-1$ for $\sigma\leq s$, that is $\sigma<s/\alpha$, $\epsilon\in(0,2(1-\alpha\sigma)/s )$.

To estimate $Q_3$, we first have for any $\vartheta\in[0,1]$
\begin{align*}
Q_3 \leq&  c\sum_{k=1}^\infty \int_0^t \left|\partial_\tau^{\frac{1-2H_2}{2} } \rho_k^\sigma u_k(t-\tau)\gamma(\tau)\right|^2 d\tau\|e_k(y)\|_{H_0^{\frac{1-2H_1}{2}}(D)}^2\|e_k(x)\|^2\\
\leq &  c\sum_{k=1}^\infty \int_0^t \left|\partial_\tau^{\frac{1-2H_2}{2} } \rho_k^{\sigma+\frac{1-2H_1}{4}} u_k(t-\tau)\gamma(\tau)\right|^2 d\tau\\
\leq &  c\sum_{k=1}^\infty\rho_k^{-2s\vartheta+2\sigma+\frac{1-2H_1}{2}} \int_0^t \left|\partial_\tau^{\frac{1-2H_2}{2} } \rho_k^{s\vartheta}   u_k(t-\tau)\gamma(\tau)\right|^2 d\tau.
\end{align*}
According to Lemma \ref{deri} by replacing $\varsigma=\frac{1-2H_2}{2}$, we have
\begin{align*}
 \sum_{k=1}^\infty\rho_k^{-2s\vartheta+2\sigma+\frac{1-2H_1}{2}} <\infty ,
\end{align*}
for $\vartheta =(\sigma+\frac{1-H_1}{2}+\epsilon)/2s\in (0,1]$, where we require $\alpha (\sigma+\frac{1-H_1}{2} )/s<H_2$ and $\sigma+\frac{1-H_1}{2}<s$, that is $2\sigma<\min\{2s+H_1-1,\frac{2sH_2 }{\alpha}+H_1-1\}=:\sigma_M$, $\epsilon\in(0,\sigma_M/2-\sigma)$. This implies that $Q_3\leq c$. Together the estimates $Q_1,Q_2,Q_3$ and the Gronwall-Henry inequality,  the desired result follows from the embedding $\dot{H}^\sigma(D)\subset H$. The proof is complete. 
 
\end{proof}

\begin{remark}
	Under the assumptions of Theorem \ref{thm3} and by replacing the condition on \( g \) with \( g \in H \), the estimate from Lemma \ref{lemma-3.1} can imply that
	\( Q_1 \leq c t^{-2\alpha} \mathbb{E} \|g\|^2. \)
	This estimate further indicates that there exists a power decay property for nonsmooth data, satisfying
	\( \| u(t) \|_{\dot{H}^\sigma(D)} \leq c t^{-\alpha}, \)
	for all \( t > 0 \).
	
\end{remark}

We next prove the temporal H\"{o}lder regularity estimate of solution.
\begin{theorem}\label{thm3}
	Let $u$ be the solution to problem (1.1), $g\in H$ and $f$ satisfy the assumptions (\ref{f}). Let $s>\frac{1-H_1}{2}$, $2sH_2/\alpha+ H_1-1 >0$ and $2\xi\in [0,   2 H_2 +(H_1-1)\alpha/s)$, then
	$$
	\mathbb E\left \|\frac{ u(t)-u(t-h)}{h^\xi}\right\|^2\leq c(t-h)^{-2\xi}\mathbb E\|g\|^2+c.
	$$
\end{theorem}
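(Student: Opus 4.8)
The plan is to differentiate the mild representation of $u$ and split the increment as $u(t)-u(t-h)=I_1+I_2+I_3$ into the initial-data part $I_1=(S(t)-S(t-h))g$, the drift part $I_2=\int_0^tS(t-\tau)f(u)\,d\tau-\int_0^{t-h}S(t-h-\tau)f(u)\,d\tau$, and $I_3$ the analogous difference of the two stochastic convolutions, and then to bound $\mathbb E\|u(t)-u(t-h)\|^2\le 3\sum_{j}\mathbb E\|I_j\|^2$. For $I_1$ I expand in the eigenbasis, $\|I_1\|^2=\sum_k|u_k(t)-u_k(t-h)|^2|g_k|^2$, and invoke Lemma \ref{lemma-3.1}(iv) with $\vartheta=0$, namely $|u_k(t)-u_k(t-h)|\le ch^\xi(t-h)^{-\xi}$; this gives $\mathbb E\|I_1\|^2\le ch^{2\xi}(t-h)^{-2\xi}\mathbb E\|g\|^2$, which after dividing by $h^{2\xi}$ produces exactly the term $c(t-h)^{-2\xi}\mathbb E\|g\|^2$.

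For $I_2$ I write it as $\int_{t-h}^tS(t-\tau)f(u)\,d\tau+\int_0^{t-h}\big(S(t-\tau)-S(t-h-\tau)\big)f(u)\,d\tau$. The first (near-diagonal) piece is controlled by the uniform bound $\|S(\cdot)\|\le c$ (from $|u_k|\le1$), the growth condition in (\ref{f}), and the a priori bound $\mathbb E\|u\|^2\le c$ of Theorem \ref{thm2}, yielding $\mathbb E\|\cdot\|^2\le ch^2$; the second piece is handled with Lemma \ref{lemma-3.1}(iv) ($\vartheta=0$), the Cauchy--Schwarz inequality and again $\mathbb E\|u\|^2\le c$, yielding $\mathbb E\|\cdot\|^2\le ch^{2\xi}$. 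Since the hypothesis forces $2\xi<2H_2\le1$, both are $\le ch^{2\xi}$, so $\mathbb E\|I_2\|^2/h^{2\xi}\le c$.

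The heart of the proof is $I_3$. After expanding $F$ in the eigenbasis and using orthonormality of $\{e_k\}$, I reduce to $\mathbb E\|I_3\|^2=\sum_k\mathbb E[J_k^2]$. Here the two convolutions carry noise on the different intervals $(0,t)$ and $(0,t-h)$, so rather than forming a single kernel on $(0,t)$ I split $J_k=J_k^{(1)}+J_k^{(2)}$ faithfully to the noise supports: the near-diagonal part with kernel $u_k(t-\tau)\gamma(\tau)$ on $(t-h,t)$, and the part with the difference kernel $\big(u_k(t-\tau)-u_k(t-h-\tau)\big)\gamma(\tau)$ on $(0,t-h)$. Applying the It\^o isometry of Lemma \ref{xi-H} to each and using $\|e_k\|_{H_0^{(1-2H_1)/2}(D)}^2\simeq\rho_k^{(1-2H_1)/2}$, the task becomes the mode estimate (with $\varsigma=(1-2H_2)/2$)
\[
\big\|\partial_\tau^{\varsigma}(\cdot)\big\|_{L^2}^2\le c\,\rho_k^{-2s\vartheta}h^{2\xi},\qquad \varsigma+\xi+\alpha\vartheta<\tfrac12 .
\]
For the difference part I use $\partial_\tau^\varsigma(\cdot)=I_\tau^{1-\varsigma}(\cdot)'$ (valid since $\gamma(0)=0$), together with the increment bound $|u_k(t-\tau)-u_k(t-h-\tau)|\le c\rho_k^{-s\vartheta}h^\xi(t-h-\tau)^{-\xi-\alpha\vartheta}$ of Lemma \ref{lemma-3.1}(iv) and the companion bound $|u_k'(t-\tau)-u_k'(t-h-\tau)|\le c\rho_k^{-s\vartheta}h^\xi(t-h-\tau)^{-\xi-\alpha\vartheta-1}$, obtained exactly as in Lemma \ref{lemma-3.1}(iv) from (\ref{inter2}) and $|e^{zh}-1|\le ch^\xi|z|^\xi$; the Cauchy--Schwarz and Fubini computation of Lemma \ref{deri}, with $\alpha\vartheta$ replaced by $\xi+\alpha\vartheta$, then yields the bound on $(0,t-h)$. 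For the near-diagonal part the boundary value $u_k(h)\gamma(t-h)$ at $\tau=t-h$ contributes a term $\le c(u_k(h))^2h^{1-2\varsigma}\le c\rho_k^{-2s\vartheta}h^{2(H_2-\alpha\vartheta)}\le c\rho_k^{-2s\vartheta}h^{2\xi}$, using $|u_k(h)|\le c\rho_k^{-s\vartheta}h^{-\alpha\vartheta}$ and $1-2\varsigma=2H_2$, so it also carries the spectral decay $\rho_k^{-2s\vartheta}$.

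Summing the mode estimates gives $\mathbb E\|I_3\|^2\le c\,h^{2\xi}\sum_k\rho_k^{-2s\vartheta+(1-2H_1)/2}$, which converges precisely when $2s\vartheta>1-H_1$. A parameter $\vartheta$ with $\tfrac{1-H_1}{2s}<\vartheta<\tfrac{H_2-\xi}{\alpha}$ exists exactly under $2\xi<2H_2+(H_1-1)\alpha/s$, the lower bound being admissible ($<1$) because $s>\tfrac{1-H_1}{2}$; this is where the stated hypotheses enter. Hence $\mathbb E\|I_3\|^2\le ch^{2\xi}$, and collecting the three bounds and dividing by $h^{2\xi}$ gives the claim. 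The main obstacle is the near-diagonal piece $J_k^{(1)}$: one must resist combining the two convolutions into one kernel on $(0,t)$, since that would create a spurious $k$-independent jump of size $\gamma(t-h)$ at $\tau=t-h$ whose sum over $k$ diverges; keeping the split faithful to the noise supports is what makes the boundary term at $t-h$ carry the decay $\rho_k^{-2s\vartheta}$ instead, and the delicate interplay between $\xi$, $\vartheta$, $H_1$, $H_2$, $s$, $\alpha$ is exactly what pins down the admissible range of $\xi$.
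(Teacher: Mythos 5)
Your proposal follows essentially the same route as the paper: the same three-way splitting of the increment, the same use of Lemma \ref{lemma-3.1}(iv) for the initial-data and drift parts, and for the stochastic term the same decomposition into a difference-kernel integral on $(0,t-h)$ plus a near-diagonal integral on $(t-h,t)$, with the identical parameter window $\tfrac{1-H_1}{2s}<\vartheta<\tfrac{H_2-\xi}{\alpha}$ pinning down the admissible $\xi$. The only divergence is cosmetic: where you estimate the near-diagonal piece via the boundary value $u_k(h)\gamma(t-h)$, the paper carries out the full $L^2$ norm of the fractional derivative by a change of variables and the Beta-function identity, arriving at the same bound $c\,h^{2(H_2-\alpha\vartheta-\xi)}$.
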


\begin{proof}
From Theorem \ref{thm2}, we have
\begin{align*}
	\mathbb E \left \|\frac{ u(t)-u(t-h)}{h^\xi}\right\|^2\leq& 3 \mathbb E \left \|\frac{1}{h^\xi}( S(t )g-S(t -h)g)\right\|^2\\
	& + 3\mathbb E \left\| \frac{1}{h^\xi}\bigg(\int_0^t  S(t-\tau )f(u)d\tau-\int_0^{t-h}  S(t-h-\tau )f(u)d\tau\bigg)\right\|^2\\
	&+3\mathbb E \bigg\|\frac{1}{h^\xi}\bigg(\int_0^t \int_D  F(t-\tau,x,y)\gamma(\tau)\xi^{H_1,H_2} (dy,d\tau)\\ 
	&-\int_0^{t-h} \int_D  F(t-h-\tau,x,y)\gamma(\tau)\xi^{H_1,H_2} (dy,d\tau)\bigg)\bigg\|^2\\
	\leq& 3J_1+3J_2+3J_3.
\end{align*}
By Lemma \ref{lemma-3.1} (iv) and Theorem \ref{Thm1}, it follows by Cauchy-Schwarz inequality that
\begin{align*}
J_1 =&\frac{1}{h^{2\xi}}\mathbb E \left (\sum_{k=1}^\infty |u_k(t)-u_k(t-h)|^2|g_k|^2 \right) 
\leq  c (t-h)^{-2\xi}  \mathbb E \|g\|^2,\\
J_2\leq &\frac{1}{h^{2\xi}}\mathbb E \left\|  \int_0^{t-h}  (S(t-\tau )-S(t-h-\tau ))f(u)d\tau+\int_{t-h}^t  S(t -\tau )f(u)d\tau \right\|^2\\
\leq & 2c^2  (t-h )^{1-\xi} \int_0^{t-h}   (t-h-\tau )^{-\xi}\mathbb E \|f(u)\|^2d\tau   +\frac{2c^2h}{h^{2\xi}} \int_{t-h}^t  \mathbb E \|f(u)\|^2d\tau  \\
\leq & c   \int_0^{t-h}   (t-h-\tau )^{-\xi}(1+\mathbb E \| u(\tau)\|^2)d\tau  +c h^{1-2\xi}  \int_{t-h}^t  (1+\mathbb E \| u(\tau)\|^2)d\tau  \leq c,
\end{align*}
where $\mathbb E \| u(t)\|\leq c$ for all $t\in I$ from Theorem \ref{thm3}.
As for $J_3$, splitting the proof into two parts
 \begin{align*}
J_3 \leq&  2\mathbb E \bigg\|\frac{1}{h^\xi} \int_0^{t-h}  \int_D  (F(t-\tau,x,y)-F(t-h-\tau,x,y))\gamma(\tau)\xi^{H_1,H_2} (dy,d\tau)\bigg\|^2\\ 
 	&+2\mathbb E \bigg\|\frac{1}{h^\xi}\int_ {t-h}^t \int_D  F(t -\tau,x,y)\gamma(\tau)\xi^{H_1,H_2} (dy,d\tau) \bigg\|^2 
 	\leq  2J_{31}+2J_{32}.
 \end{align*}
By applying the similar proof in Theorem \ref{thm3} and (\ref{inter2}) yield
\begin{align*}
J_{31}\leq&   \mathbb E \bigg\|\frac{1}{h^\xi} \int_0^{t-h}  \int_D  (F(t-\tau,x,y)-F(t-h-\tau,x,y))\gamma(\tau)\xi^{H_1,H_2} (dy,d\tau)\bigg\|^2\\ 
	\leq &  \frac{c}{h^{2\xi}}\sum_{k=1}^\infty \int_0^{t-h} \left|\partial_\tau^{\frac{1-2H_2}{2} }   (u_k(t-\tau)-u_k(t-h-\tau))\gamma(\tau)\right|^2 d\tau\|e_k(y)\|_{H_0^{\frac{1-2H_1}{2}}(D)}^2\|e_k(x)\|^2\\
	\leq &  c\sum_{k=1}^\infty \rho_k^{ \frac{1-2H_1}{2}}\int_0^{t-h} \left|\partial_\tau^{\frac{1-2H_2}{2} }  (u_k(t-\tau)-u_k(t-h-\tau))\gamma(\tau)\right|^2 d\tau\\
	\leq &  c\sum_{k=1}^\infty\rho_k^{-\frac{1}{2}-2\epsilon} \int_0^{t-h} \left|\partial_\tau^{\frac{1-2H_2}{2} } \rho_k^{ \frac{1- H_1}{2}+\epsilon} (u_k(t-\tau)-u_k(t-h-\tau))\gamma(\tau)\right|^2 d\tau.
\end{align*}
By using Lemma \ref{lemma-3.1}, as the analogous proof in Theorem \ref{thm3}, the following estimate holds
$$
\int_0^{t-h} \left|\partial_\tau^{\frac{1-2H_2}{2} } \rho_k^{ \frac{1- H_1}{2}+\epsilon} (u_k(t-\tau)-u_k(t-h-\tau))\gamma(\tau)\right|^2 d\tau\leq ch^{2\xi}  ,
$$
for $\vartheta :=(\frac{1- H_1}{2}+\epsilon)/s $ and we require $2\xi<2H_2+(H_1-1)\alpha/s:=\alpha_s$ and $\frac{1- H_1}{2} < s$ with small $\epsilon\in (0,s(\alpha_s-2\xi)/(2s))\cap [0,1]$. Thus, we have $J_{31}\leq c$ and $\xi+\alpha\vartheta<H_2$.  
Similarly, it follows from the variable substitutions that 
\begin{align*}
\int_{t-h}^t \left|\partial_\tau^{\frac{1-2H_2}{2} } \rho_k^{s\vartheta } u_k(t-\tau) \gamma(\tau)\right|^2 d\tau\leq &  c \int_{t-h}^t  \int_0^\tau k_{H_2+\frac{1 }{2} }^2(\tau-\upsilon)  (t-\upsilon)^{-\alpha\vartheta -1}   (t-\tau)^{-\alpha\vartheta } d\upsilon d\tau\\
=&c \int_{0}^h  \int_\tau^{t} k_{H_2+\frac{1 }{2} }^2( \upsilon-\tau)   \upsilon ^{-\alpha\vartheta -1}    \tau ^{-\alpha\vartheta}d\upsilon d\tau \\
=&c \int_{0}^h  \int_1^{t/\tau} k_{H_2+\frac{1 }{2} }^2( \upsilon-1)   \upsilon ^{-\alpha\vartheta -1}    \tau ^{2(H_2-\alpha\vartheta )-1}d\upsilon d\tau \\
=&c \int_{0}^h  \int_0^{t/\tau-1} k_{H_2+\frac{1 }{2} }^2( \upsilon )   (\upsilon+1) ^{-\alpha\vartheta -1}    \tau ^{2(H_2-\alpha\vartheta )-1}d\upsilon d\tau, 
\end{align*}
By using the identity
$$
\int_0^\infty \upsilon^{p-1}(1+\upsilon)^{-(p+q)}d\upsilon=\frac{\Gamma(p)\Gamma(q)}{\Gamma(p+q)},~~p,q>0,
$$
 with replaced by $p=2H_2+1$, $q=1-\alpha\vartheta$, we thus get
\begin{align*}
J_{32} \leq&    \frac{c}{h^{2\xi}}\sum_{k=1}^\infty \int_{t-h}^t  \left|\partial_\tau^{\frac{1-2H_2}{2} }    \rho_k^{ \frac{1-2H_1}{4}} u_k(t-\tau) \gamma(\tau)\right|^2 d\tau \\
	\leq &  \frac{c}{h^{2\xi}}\sum_{k=1}^\infty\rho_k^{-\frac{1}{2}-2\epsilon} \int_{t-h}^t \left|\partial_\tau^{\frac{1-2H_2}{2} } \rho_k^{ \frac{1- H_1}{2}+\epsilon}  u_k(t-\tau) \gamma(\tau)\right|^2 d\tau 
	\leq   c h^{2(H_2-\alpha\vartheta -\xi) }.
\end{align*} 
Together above estimates, the desired results are obtained. The proof is complete.
\end{proof}

\section{Wong-Zakai approximation}\label{sect4}

We next consider the case of initial condition $g\equiv 0$ in problem (1.1).
Let $\tau=T/m$ and $h=l/m'$, $I_i=(t_i,t_{i+1}]$ and $D_j=(x_j,x_{j+1}]$ with $t_i=i\tau$ for $i=0,1,\cdots,m$, $x_j=jh$  for $j=0,1,\cdots,m'$. We denote the Wong-Zakai approximation of $\xi^{H_1,H_2}(x,t)$ by
$$
\xi_W^{H_1,H_2}(x,t)=\sum_{i=0}^{m-1}\sum_{j=0}^{m'-1} \left(
\frac{1}{\tau h}\int_{I_i}\int_{D_j} \xi^{H_1,H_2}(dy,d\upsilon)\right) \chi_{i,j}(x,t),
$$
where $\chi_{i,j}(x,t)$ is the characteristic function on $I_i\times D_j$. Let $u_W(x,t) $ be the solution of the regularized equation
\begin{equation}\label{regu}
	\left\{ 
	\begin{aligned}
  \partial_tu_W &+\lambda \partial^{1-\beta}_{t} u_W +\mu \partial^{1-\alpha} _{t}  A^s u_W =f(u_W)+\gamma(t) \xi_W^{H_1,H_{2}}(x,t) ,\qquad {\rm in}\ D\times (0,T],\\
	u_W &=0,\qquad  {\rm on}\  \partial D \times (0,T],\\
	u_W &=0 ,\qquad {\rm in}\   D \times\{t=0\},
	\end{aligned}
	\right.
\end{equation}
By a direct calculation, we have
$$
u_W(x,t)=\int_0^t S(t-\upsilon)f(u_W)d\upsilon+\int_0^t\int_D F_W(t,\upsilon,x,y) \xi_W^{H_1,H_{2}}(dy,d\upsilon),
$$
where
$$
F_W(t,\upsilon,x,y)=\sum_{k=1}^\infty u_{W,k}(t,\upsilon)e_k(x)e_{W,k}(y),
$$
and
$$
u_{W,k}(t,\upsilon)=\frac{1}{\tau} \sum_{i=0}^{m-1} \chi_{I_i}(\upsilon) \int_{I_i} u_k(t-r)\chi_{(0,t)}(t-r)dr,\quad e_{W,k}(y)=\frac{1}{h} \sum_{j=0}^{m'-1} \chi_{I_i}(y) \int_{D_j} e_k(y) dy.
$$

\begin{theorem}\label{thm4}
	Let $w_W$ be the solution to problem (\ref{regu}), and $f$ satisfies (\ref{f}).  Let $s>\frac{1-H_1}{2}$, $2H_2+(H_1-1)\alpha>0$ and $2\sigma\in [0,\min\{2s+H_1-1, {2H_2}/{\alpha}+H_1-1\})$, then
	$$
	\mathbb E\| u_W(t)\|^2_{\dot{H}^\sigma(D)}\leq c.
	$$
\end{theorem}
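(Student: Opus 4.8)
The plan is to adapt the proof of the spatial-regularity estimate (Theorem~\ref{thm3}) to the regularized kernel $F_W$. Since $g\equiv0$ in~(\ref{regu}), the mild representation of $u_W$ consists only of a nonlinear Duhamel term and a stochastic convolution against $F_W$, so I would first split
$$
\mathbb E\|u_W(t)\|^2_{\dot H^\sigma(D)}\leq 2Q_2^W+2Q_3^W,
$$
where $Q_2^W$ comes from $\int_0^t S(t-\upsilon)f(u_W)\,d\upsilon$ and $Q_3^W$ from the stochastic integral. The term $Q_2^W$ is handled verbatim as $Q_2$ in Theorem~\ref{thm3}: the interpolation bound~(\ref{inter2}), the growth condition~(\ref{f}) on $f$, and the Cauchy--Schwarz inequality give
$$
Q_2^W\leq c\Big(1+\int_0^t(t-\tau)^{-2\alpha\sigma/s+1-\epsilon}\,\mathbb E\|u_W(\tau)\|^2\,d\tau\Big),
$$
which is absorbed at the end by the Gronwall--Henry inequality.

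The heart of the proof is $Q_3^W$. Writing $F_W(t,\upsilon,x,y)=\sum_k u_{W,k}(t,\upsilon)e_k(x)e_{W,k}(y)$, I would apply the It\^o isometry of Lemma~\ref{xi-H} to the regularized stochastic convolution: the $\dot H^\sigma$-norm contributes $\rho_k^{2\sigma}$ through $e_k(x)$, the spatial integration contributes the $H_0^{(1-2H_1)/2}(D)$-norm of the averaged eigenfunction $e_{W,k}$, and the temporal integration contributes the $H^{(1-2H_2)/2}$-seminorm of $u_{W,k}(t,\cdot)\gamma(\cdot)$ (cf.\ Remark~\ref{Re-2}), giving
$$
Q_3^W\leq c\sum_{k=1}^\infty \rho_k^{2\sigma}\,\big\|\partial_\tau^{(1-2H_2)/2}u_{W,k}(t,\cdot)\gamma(\cdot)\big\|^2_{L^2(0,t)}\,\|e_{W,k}\|^2_{H_0^{(1-2H_1)/2}(D)}.
$$
To recover the summable series of Theorem~\ref{thm3}, I would prove two bounds for the averaged objects that are uniform in the mesh sizes $\tau$ and $h$: a spatial stability estimate $\|e_{W,k}\|^2_{H_0^{(1-2H_1)/2}(D)}\leq c\,\rho_k^{(1-2H_1)/2}$, and the averaged analogue of Lemma~\ref{deri}, namely $\|\partial_\tau^{(1-2H_2)/2}\rho_k^{s\vartheta}u_{W,k}(t,\cdot)\gamma(\cdot)\|_{L^2(0,t)}\leq c$ for admissible $\vartheta$.

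I expect the main obstacle to be these two mesh-uniform bounds, and especially the temporal one. The difficulty is that $u_{W,k}(t,\cdot)$ is the piecewise-constant cell average of $u_k(t-\cdot)$, so the Riemann--Liouville derivative $\partial_\tau^{(1-2H_2)/2}$ must be controlled against both the box-kernel smoothing and the boundary singularity of $u_k$ near $\upsilon=t$; the derivative and H\"older bounds of Lemma~\ref{lemma-3.1}(iv) have to survive the averaging independently of $\tau$. I would handle this by using that the cell average is the $L^2$-projection onto piecewise constants, which is bounded, uniformly in the mesh, on the fractional space of order $(1-2H_2)/2<1/2$, thereby reducing the estimate to the already-established bound of Lemma~\ref{deri} for $u_k$ after pulling out the $C^1$ factor $\gamma$; the spatial bound on $e_{W,k}$ follows the same way, since $(1-2H_1)/2<1/2$ makes the projection onto piecewise constants uniformly bounded on $H_0^{(1-2H_1)/2}(D)$. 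Once both estimates hold, $Q_3^W$ reduces to a convergent series of the form $\sum_k\rho_k^{2\sigma+(1-2H_1)/2-2s\vartheta}$ with $\vartheta$ chosen as in Theorem~\ref{thm3}, the convergence holding throughout the admissible range of $\sigma$ declared in the statement; combining $Q_3^W\leq c$ with $Q_2^W$ and the Gronwall--Henry inequality then yields the asserted bound.
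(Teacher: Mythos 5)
Your proposal follows essentially the same route as the paper: the same splitting into a Duhamel term handled verbatim as $Q_2$ in Theorem \ref{thm3} and a stochastic term reduced via the It\^o isometry, followed by exactly the two mesh-uniform stability bounds the paper uses — a triangle-inequality/projection argument showing $\|\partial_r^{(1-2H_2)/2}u_{W,k}(t,\cdot)\gamma(\cdot)\|\leq c\|\partial_r^{(1-2H_2)/2}u_k(t-\cdot)\gamma(\cdot)\|$ and $\|e_{W,k}\|_{H_0^{(1-2H_1)/2}}\leq c\|e_k\|_{H_0^{(1-2H_1)/2}}$ — before concluding with the series argument of Theorem \ref{thm3} and the Gronwall--Henry inequality. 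Your identification of the boundedness of the piecewise-constant averaging on fractional spaces of order below $1/2$ as the key point is precisely what the paper's (terser) estimates rely on.
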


\begin{proof}
	From the similar proof in Theorem \ref{thm3}, we have
\begin{align*}
	\mathbb E\| u_W(t)\|^2_{\dot{H}^\sigma(D)}\leq & 2	\mathbb E\left \|\int_0^t  S(t-\upsilon)f(u_W)d\upsilon\right\|^2_{\dot{H}^\sigma(D)}\\ &+ 2	\mathbb E\left \|\int_0^t\int_D F_W(t,\upsilon,x,y) \xi ^{H_1,H_{2}}(dy,d\upsilon)\right\|^2_{\dot{H}^\sigma(D)}\\
		\leq& c\left( 1+\int_0^t (t-\upsilon)^{-2\alpha\sigma/s+1-\epsilon} \mathbb E  \| u_W(\upsilon)\|^2 d\upsilon\right)+Q.
\end{align*}	
To estimate $R$, we note that
\begin{align*}
 \left\|\partial_r^{\frac{1-2H_2}{2} } u_{W,k}(t,r)\gamma(r)\right\|^2\leq &\left\|\partial_r^{\frac{1-2H_2}{2} } (u_{W,k}(t,r)\gamma(r)-u_k(t-r)\gamma(r)\chi_{(0,t)}(r))\right\|^2\\
 &+ c \left\|\partial_r^{\frac{1-2H_2}{2} }   u_k(t-r)\gamma(r)\chi_{(0,t)}(r) \right\|^2 
 \leq \left\|\partial_r^{\frac{1-2H_2}{2} }   u_k(t-r)\gamma(r)  \right\|^2.
\end{align*}	
In particular, we have
\begin{align*}
	\|e_{W,k}(y)\|_{H_0^{\frac{1-2H_1}{2}}}^2\leq 2	\|e_{W,k}(y)-e_k(y)\|_{H_0^{\frac{1-2H_1}{2}}}^2+
	2	\| e_k(y)\|_{H_0^{\frac{1-2H_1}{2}}}^2\leq c	\| e_k(y)\|_{H_0^{\frac{1-2H_1}{2}}}^2.
\end{align*}
Consequently, the previous proof in Theorem \ref{thm3} shows that
$Q\leq c$. And then the Gronwall-Henry inequality implies the desired estimate. The proof is complete.

\end{proof}

In what follows,  an error estimate for the Wong-Zakai
approximation of $\xi_W$ are obtained, and the convergence rate of the solution $u_W$ of (\ref{regu}) to the solution $u$ of (1.1) also established in terms of $\tau$ and $h$.
\begin{theorem}\label{thm5}
Let $u$ and $u_W$ be the solution to problem (1.1) and (\ref{regu}), respectively. Let $f$ satisfies (\ref{f}), and let $s>\frac{1-H_1}{2}$, $2H_2+(H_1-1)\alpha>0$ and $2\sigma\in [0,\min\{1-2H_1, \textsc{ }{4sH_2}/{\alpha}-1,(1+\epsilon)/2 )$, then
$$
\mathbb E\| u(t)- u_W(t)\|^2\leq c(h^{2\sigma+2H_1-1}+h^{2H_1-1}\tau^{2H_2-\frac{\alpha}{2s}-\epsilon }).
$$	
\end{theorem}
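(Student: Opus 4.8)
The plan is to start from the mild representations of $u$ and $u_W$ with $g\equiv 0$, subtract them, and exploit the fact that integrating the cell-averaged kernel $F_W$ against the Wong--Zakai noise $\xi_W^{H_1,H_2}$ coincides with integrating it against the genuine noise $\xi^{H_1,H_2}$ (this is exactly how $F_W$, through $u_{W,k}$ and $e_{W,k}$, was built from the cell averages). This yields
\begin{align*}
u(t)-u_W(t)={}&\int_0^t S(t-\tau)\big(f(u)-f(u_W)\big)\,d\tau\\
&+\int_0^t\!\!\int_D\big(F(t-\tau,x,y)-F_W(t,\tau,x,y)\big)\gamma(\tau)\,\xi^{H_1,H_2}(dy,d\tau).
\end{align*}
The first (deterministic) term is harmless: by the stability bound $\|S(t)g\|\le c\|g\|$ of Theorem \ref{Thm1}, the Cauchy--Schwarz inequality, and the Lipschitz assumption (\ref{f}), its mean square is dominated by $ct\int_0^t\mathbb E\|u-u_W\|^2\,d\tau$, which a Gronwall--Henry argument will absorb at the very end. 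All the real work lies in the stochastic term.

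For the stochastic term I would invoke the It\^o isometry bound of Lemma \ref{xi-H}. Expanding the kernel difference in the eigenbasis and splitting it as
$$u_k(t-\tau)e_k(y)-u_{W,k}(t,\tau)e_{W,k}(y)=\big(u_k(t-\tau)-u_{W,k}(t,\tau)\big)e_k(y)+u_{W,k}(t,\tau)\big(e_k(y)-e_{W,k}(y)\big)$$
isolates a pure temporal-averaging error from a pure spatial-averaging error. Lemma \ref{xi-H} then converts the mean square into a sum over $k$ of products of a time factor $\big\|\partial_\tau^{(1-2H_2)/2}[\,\cdot\,\gamma]\big\|_{L^2(0,t)}^2$ and a spatial factor $\|\cdot\|_{H_0^{(1-2H_1)/2}(D)}^2$, exactly in the manner of the proofs of Theorems \ref{thm3} and \ref{thm4}.

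The two genuinely new ingredients are the cell-averaging errors. For the temporal part I would estimate $|u_k(t-\tau)-u_{W,k}(t,\tau)|$, where $u_{W,k}$ is the average of $u_k(t-\cdot)$ over an interval of length $\tau$, using the H\"older continuity of $u_k$ from Lemma \ref{lemma-3.1}(iv), namely $|u_k(t)-u_k(t-h)|\le c\rho_k^{-s\vartheta}h^{\xi}(t-h)^{-\xi-\alpha\vartheta}$; this produces a factor $\tau^{\xi}$ together with the weight $\rho_k^{-s\vartheta}$, and running it through $\partial_\tau^{(1-2H_2)/2}$ and the $L^2$-norm as in Lemma \ref{deri} delivers the temporal rate after the frequency weight $\|e_k\|_{H_0^{(1-2H_1)/2}}^2\simeq\rho_k^{(1-2H_1)/2}$ is inserted. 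For the spatial part I would bound $\|e_k-e_{W,k}\|_{H_0^{(1-2H_1)/2}(D)}$ by the standard approximation property of the piecewise-constant ($L^2$-projection) cell average against the smooth eigenfunction $e_k\in H^2(D)\cap H^1_0(D)$, extracting a factor $h^{\sigma}$ (up to a frequency weight) which, combined with the time factor controlled as in Lemma \ref{deri} via (\ref{inter2}), gives the spatial rate $h^{2\sigma+2H_1-1}$.

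The summability over $k$ is where the hypotheses enter: with $\rho_k\simeq k^2$ one must choose the interpolation parameter $\vartheta\in[0,1]$ so that the resulting power of $\rho_k$ falls below $-\tfrac12$ while the powers of $\tau$ and $h$ stay positive, and one must respect the integrability constraint $\varsigma+\alpha\vartheta<\tfrac12$ of Lemma \ref{deri} with $\varsigma=(1-2H_2)/2$. This is precisely what $s>\tfrac{1-H_1}{2}$, $2H_2+(H_1-1)\alpha>0$, and the admissible range of $2\sigma$ encode. I expect the main obstacle to be this simultaneous balancing, together with tracking the frequency cutoff $k\lesssim m'=l/h$ inherited from the piecewise-constant spatial noise: it is this cutoff that couples the spatial mesh to the temporal-averaging term and explains the factor $h^{2H_1-1}$ appearing alongside the $\tau$-power. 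Arranging the H\"older exponent $\xi$, the weight $\vartheta$, and the spatial approximation order so that the $k$-series converges while yielding the stated joint power of $\tau$ and $h$ is the delicate bookkeeping; collecting the two pieces and applying Gronwall--Henry to the $f$-contribution then closes the estimate.
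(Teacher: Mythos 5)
Your plan follows the paper's proof essentially step for step: the mild formulation with the Wong--Zakai integral rewritten against the true noise, the isometry bound of Lemma \ref{xi-H}, a cross-splitting of the kernel difference into a spatial-averaging error (handled by the approximation property of $e_{W,k}$) and a temporal-averaging error (handled via the H\"older bounds of Lemma \ref{lemma-3.1} and Lemma \ref{deri}), the $\rho_k$-power balancing for summability, and a final Gronwall absorption of the drift term. The only cosmetic difference is the grouping of the cross terms: the paper attaches $e_{W,k}$ (rather than $e_k$) to the temporal error $u_k-u_{W,k}$ and extracts the factor $h^{2H_1-1}$ from the inverse estimate $\|e_{W,k}\|_{H_0^{(1-2H_1)/2}(D)}^2\leq ch^{2H_1-1}$, which is the precise form of the ``frequency cutoff'' mechanism you anticipate.
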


\begin{proof}
We first have
\begin{align*}
	\mathbb E\| u(t)- u_W(t)\|^2\leq & 2	\mathbb E\left \|\int_0^t   S(t-\upsilon)(f(u)-f(u_W)d\upsilon\right\|^2\\ &+ 2	\mathbb E\left \|\int_0^t\int_D (F(t,\upsilon,x,y)-F_W(t,\upsilon,x,y)) \xi ^{H_1,H_{2}}(dy,d\upsilon)\right\|^2\leq R_1+R_2 .
\end{align*}	
The assumptions of $f$ and $\| S(t)v\|\leq \|v\|$ imply 
\begin{align*}
R_1 \leq   t\int_0^t 	\mathbb E\left \|f(u)-f(u_W)\right\|^2 d\upsilon  \leq   c^2t \int_0^t 	\mathbb E\left \|u(\upsilon)-u_W(\upsilon)\right\|^2 d\upsilon.
\end{align*}	
To estimate $R_2$, we observe the identity
\begin{align*}
u_k(t-\upsilon)e_k(y)-u_{W,k}(t,\upsilon)e_{W,k}(y)=&
u_k(t-\upsilon)e_k(y)-u_{ k}(t-\upsilon)e_{W,k}(y)\\
&+u_k(t-\upsilon)e_{W,k}(y)-u_{W,k}(t,\upsilon)e_{W,k}(y),
\end{align*}
from Lemma \ref{es1}, it yields
\begin{align*}
 R_2\leq &  c\sum_{k=1}^\infty \left\|\partial_\upsilon^{\frac{1-2H_2}{2}}u_k(t-\upsilon) \gamma(\upsilon)\right\|^2_{L^2(0,t)}\| e_k(y)- e_{W,k}(y)\|_{H_0^{\frac{1-2H_1}{2}}}^2\\
 &+c\sum_{k=1}^\infty \left\|\partial_\upsilon^{\frac{1-2H_2}{2}}(u_k(t-\upsilon)-u_{W,k}(t,\upsilon) ) \gamma(\upsilon)\right\|^2_{L^2(0,t)}\|   e_{W,k}(y)\|_{H_0^{\frac{1-2H_1}{2}}}^2\leq R_{21}+R_{22}.
\end{align*}	
Since from the approximation theory
$$
\| e_k(y)- e_{W,k}(y)\|_{H_0^{\frac{1-2H_1}{2}}}^2\leq c h^{2\sigma+2H_1-1}\rho_k^{\sigma },
$$
for $2\sigma+2H_1-1>0$, it yields
$$
R_{21}\leq c h^{2\sigma+2H_1-1} \left\|\partial_\upsilon^{\frac{1-2H_2}{2}}\rho_k^{\frac{1+2\epsilon+2\sigma}{4} } u_k(t-\upsilon) \gamma(\upsilon)\right\|^2_{L^2(0,t)}\leq ch^{2\sigma+2H_1-1},
$$
where we need $2\sigma<\min\{4sH_2/\alpha-1:=\alpha_{s2},(1+\epsilon)/2\}$ and choosing small $\epsilon\in (0,\alpha_{s2}/2-\sigma)$. To estimate $R_{22}$, by using the inverse estimate  and the projection theorem, by $\gamma\in C^1(I)$ and Remark \ref{Re-2} we have
\begin{align*}
R_{22}\leq &  ch^{2H_1-1}\sum_{k=1}^\infty \rho_k^{-\frac{1-4\epsilon}{2} } \left\|\partial_\upsilon^{\frac{1-2H_2}{2}}\rho_k^{ \frac{1+4\epsilon }{4} }(u_k(t-\upsilon)-u_{W,k}(t,\upsilon) ) \gamma(\upsilon)\right\|^2_{L^2(0,t)} \\
\leq &  ch^{2H_1-1}\tau^{2H_2-\frac{\alpha}{2s}-\epsilon}\sum_{k=1}^\infty \rho_k^{-\frac{1-4\epsilon}{2} } \left\|\partial_\upsilon^{\frac{1-\epsilon  }{2}-\frac{\alpha}{4s}}\rho_k^{ \frac{1+4\epsilon }{4} } u_k( \upsilon)   \right\|^2  
\leq ch^{2H_1-1}\tau^{2H_2-\frac{\alpha}{2s}-\epsilon },
 \end{align*}	
for some small $\epsilon>0$. The proof is complete.

\end{proof}

\section{Spatial/temporal discretizations and error analysis}\label{sect5}

In this section, we delve into the spatial and temporal discretizations, as well as the associated error analysis.
\subsection{Spatial discretization}
To discretize the fractional Laplacian, we use a spectral
Galerkin method, 
now introducing a finite-dimensional subspace of $H$ by $H_N=\{e_1,e_2,\cdots,e_N\}$ for $N\in\mathbb N$ and the projection operator $P_N$ by
$$
(P_N\varphi,v_N)=(\varphi,v_N),~~\forall v_N\in H_N.
$$
By a simple calculation, it deduces that
$$
P_Nu=\sum_{k=1}^N (u,e_k)e_k,~~~\forall u\in H.
$$
Upon introducing the discrete fractional Laplacian $A_N^s:H_N\to H_N$ by
$$
(A_N^s u_N,v_N)=(A^s u,v_N),~~~\forall u_N,v_N\in H_N,
$$
which leads to
$$
A_N^su_N=A^s_NP_Nu_N=P_NA_N^su_N=\sum_{k=1}^N\rho_k^s (u_N,e_k)e_k,~~~\forall u\in H.
$$
We rewrite the spatially discrete  regularized problem (\ref{regu}) as to find $u\in H_N$ such that
\begin{equation}\label{approximation}
\partial_tu_N   +\lambda \partial^{1-\beta}_{t} u_N +\mu \partial^{1-\alpha} _{t}  A^s_N u_N =P_Nf(u_N)+\gamma(t) P_N\xi_W^{H_1,H_{2}}(x,t) ,
\end{equation}
with $u_N(0)=0$. Accordingly, we introduction an operator $S_N(t)$ by its Laplace transform
$$
\hat{S}_N(z)=\mu^{-1}z^{\alpha-1}(h(z)+A_N^s)^{-1},~~h(z)=\mu^{-1}z^{\alpha}(1+\lambda z^{-\beta}),
$$
with its inverse Laplace transform showing
\begin{equation}\label{SN}
S_N(t)=\frac{1}{2\pi i}\int_{\Gamma_{\kappa,\theta}} e^{zt} \mu^{-1}z^{\alpha-1}(h(z)+A_N^s)^{-1} dz.
\end{equation}
Furthermore, 
the discrete norm $\|\cdot\|_{*H_0^s}$ on the space $H_N$ for any $s\geq0$ 
$$
\|\varphi\|_{*H_0^s(D)}=\sum_{k=1}^N \rho_{k }^s (\varphi,e_{k })^2,~~\forall \varphi \in H_N.
$$
\begin{remark}
Similarly to that of
Theorem \ref{Thm1},  let $S_N(t)$ be defined in (\ref{SN}) and $v_N\in H_N$, then the stability of the operator $S_N(t)$ is given as follows
	$$
	\|S^{(m)}_N(t)v_N\|_{*\dot{H}^q(D)}\leq ct^{-m-\alpha(q-p)/2}\|v_N\|_{*\dot{H}^p(D)},
	$$
	for $1\leq p\leq q\leq 2$ whenever $m=0$ or $1\leq p,q\leq 2$ whenever $m\geq1$.
	
\end{remark}

By the definitions of $P_N$ and $A_N^s$, the solution $u_N$ of  regularized problem is given by
$$
u_N(t)=\int_0^t S_N(t-\upsilon)P_Nf(u_N)d\upsilon+\int_0^t S_N(t-\upsilon)\gamma(\upsilon)P_N  \xi_{W}^{H_1,H_{2}}( d\upsilon).
$$
Let
$$
F_{W,N}(t,x,y)=\sum_{k=1}^{N} F_{W,k}(t,x,y),
$$
for $F_{W, k}(t,x,y)=S_{ N}(t) P_Ne_k(x)e_{W,k}(y)$. Then, the solution $u_N$ can be written as
\begin{equation}\label{120}
\begin{aligned}
u_N(t)=&\int_0^t S_N(t-\upsilon)P_Nf(u_N)d\upsilon +\int_0^t\int_D  F_{W,N}(t-\upsilon,x,y)\gamma(\upsilon)  \xi^{H_1,H_{2}}(dy, d\upsilon).
\end{aligned}
\end{equation}

By virtue of the stability of $P_N$, $\|P_Nu\|\leq \|u\|$, being similar to the proofs of Theorems \ref{thm2} and \ref{thm3}, the following results are easy to obtained.
\begin{theorem}\label{thm6}
	Let $u_N$ be the solution to  problem (\ref{approximation}), and $f$ satisfy the assumptions (\ref{f}). Let $s>\frac{1-H_1}{2}$, $2sH_2+(H_1-1)\alpha>0$ and $2\sigma\in [0,\min\{2s+H_1-1,{2sH_2 }/{\alpha}+H_1-1\})$, then
	$$
	\mathbb E\|  u_N(t)\|^2_{*\dot{H}^\sigma(D)}\leq c.
	$$
\end{theorem}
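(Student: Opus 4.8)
The plan is to mirror the proof of Theorem \ref{thm3} almost verbatim, replacing the continuous solution operator $S(t)$ and kernel $F$ by the discrete objects $S_N(t)$ and $F_{W,N}$ and exploiting the stability $\|P_N\|\le 1$; the one genuinely new requirement is that every constant produced stay independent of $N$. Since $u_N(0)=0$, the representation (\ref{120}) contains only the nonlinear-source term and the stochastic convolution, so I would begin from the splitting
$$
\mathbb E\|u_N(t)\|^2_{*\dot H^\sigma(D)}\le 2Q_2+2Q_3,
$$
where $Q_2$ is the contribution of $\int_0^t S_N(t-\upsilon)P_Nf(u_N)\,d\upsilon$ and $Q_3$ that of the stochastic term built from $F_{W,N}$.

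For $Q_2$ I would use the discrete stability estimate for $S_N(t)$ stated in the Remark following (\ref{SN}). On the eigenbasis $S_N$ carries the same symbol $\hat u_k$ as $S$ for $k\le N$, so the bound $\rho_k^{\sigma}|\hat u_k(z)|\le c|z|^{-1+\alpha\sigma/s}$ obtained from (\ref{inter2}) with $\vartheta=\sigma/s$ holds verbatim, producing the weakly singular kernel $(t-\upsilon)^{-\alpha\sigma/s}$. Combined with $\|P_N\|\le1$, the growth bound in (\ref{f}), and the Cauchy--Schwarz inequality, this gives, exactly as for the term of the same name in Theorem \ref{thm4},
$$
Q_2\le c\Big(1+\int_0^t (t-\upsilon)^{-2\alpha\sigma/s+1-\epsilon}\,\mathbb E\|u_N(\upsilon)\|^2\,d\upsilon\Big),
$$
which is legitimate since $2\sigma<2s+H_1-1\le 2s$ forces $\sigma<s\le s/\alpha$, and $\epsilon>0$ can be taken small enough that the exponent $-2\alpha\sigma/s+1-\epsilon>-1$ remains integrable.

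For the stochastic term, the It\^o isometry bound of Lemma \ref{xi-H} applied to $F_{W,N}=\sum_{k=1}^N S_N(t)P_Ne_k\,e_{W,k}$ yields
$$
Q_3\le c\sum_{k=1}^N\int_0^t\Big|\partial_\tau^{\frac{1-2H_2}{2}}\rho_k^{\sigma}u_{W,k}(t,\tau)\gamma(\tau)\Big|^2 d\tau\,\|e_{W,k}(y)\|^2_{H_0^{\frac{1-2H_1}{2}}(D)}\|e_k(x)\|^2 .
$$
As in Theorem \ref{thm4} I would then dominate $u_{W,k}$ by $u_k$ and $\|e_{W,k}\|_{H_0^{(1-2H_1)/2}}$ by $c\|e_k\|_{H_0^{(1-2H_1)/2}}$, reducing $Q_3$ to precisely the series treated for the term $Q_3$ of Theorem \ref{thm3}: invoking Lemma \ref{deri} (with $\varsigma=(1-2H_2)/2$) and (\ref{inter2}), each summand contributes $\rho_k^{-2s\vartheta}$, and since $\rho_k\simeq k^2$ the surviving sum $\sum_k \rho_k^{-2s\vartheta+2\sigma+(1-2H_1)/2}$ converges for a suitable $\vartheta\in(0,1]$ with $\alpha\vartheta<H_2$, exactly under $2\sigma<\min\{2s+H_1-1,\,2sH_2/\alpha+H_1-1\}$. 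The point specific to the discrete setting is that the truncated sum $\sum_{k=1}^N$ is bounded by the full convergent series, so $Q_3\le c$ uniformly in $N$.

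Finally, I would close the argument as in Theorem \ref{thm3}: the $\sigma=0$ bound $\mathbb E\|u_N(t)\|^2\le c$ (the $H$-norm case, established exactly as in Theorem \ref{thm2}) inserted into the $Q_2$ integral gives $Q_2\le c$, whence $\mathbb E\|u_N(t)\|^2_{*\dot H^\sigma(D)}\le c$; alternatively one may invoke the generalized Gronwall--Henry inequality together with the embedding $\dot H^\sigma(D)\subset H$. The main obstacle is not any single estimate---each one is the discrete or truncated shadow of a bound already proved---but the bookkeeping ensuring $N$-uniformity, which is secured by the fact that $S_N$ and $S$ share the same symbol on $H_N$ (so the contour bounds are identical) and by majorizing the finite spectral sums in $Q_3$ by their convergent infinite counterparts.
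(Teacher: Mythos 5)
Your proposal is correct and follows essentially the same route the paper takes: the paper itself only remarks that the result follows from the stability $\|P_Nu\|\leq\|u\|$ by arguments "similar to the proofs of Theorems \ref{thm2} and \ref{thm3}", and your elaboration (same symbol for $S_N$ on $H_N$, domination of $u_{W,k}$ and $e_{W,k}$ as in Theorem \ref{thm4}, and majorizing the truncated spectral sum by the convergent infinite series to get $N$-uniform constants) is exactly the intended filling-in of those details.
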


\begin{theorem}\label{thm7}
	Let $u_N$ be the solution to  problem (\ref{approximation}),  and $f$ satisfy the assumptions (\ref{f}). Let $s>\frac{1-H_1}{2}$, $2sH_2/\alpha+ H_1-1 >0$ and $2\xi\in [0,   2 H_2 +(H_1-1)\alpha/s)$, then
	$$
	\mathbb E\left \|\frac{ u_N(t)-u_N(t-h)}{h^\xi}\right\|^2\leq c.
	$$
\end{theorem}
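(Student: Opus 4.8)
The plan is to mirror the decomposition already used for the continuous temporal H\"older estimate of $u$ (the $J_1+J_2+J_3$ splitting), exploiting the fact that because each $e_k$ remains an eigenfunction of $A_N^s$ with eigenvalue $\rho_k^s$ for $k\le N$, the operator $S_N(t)$ acts on the $k$-th mode exactly through the scalar function $u_k(t)$, i.e. $S_N(t)P_Ne_k=u_k(t)e_k$. Consequently the mild representation (\ref{120}) gives $F_{W,N}(t,x,y)=\sum_{k=1}^N u_k(t)e_k(x)e_{W,k}(y)$, so the only departures from the continuous setting are the replacement $e_k(y)\mapsto e_{W,k}(y)$, the truncation of the spectral sum at $N$, and the vanishing initial datum $u_N(0)=0$. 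Because of the last point there is no $J_1$-type term, which is precisely why the final bound is the uniform constant $c$ rather than the singular $(t-h)^{-2\xi}$ contribution that appeared for $u$.

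First I would write $\mathbb E\|u_N(t)-u_N(t-h)\|^2\le 2J_2+2J_3$, where $J_2$ collects the nonlinear Duhamel term and $J_3$ the stochastic integral. For $J_2$ I would split the time integral across $[0,t-h]$ and $[t-h,t]$, bound the kernel difference $S_N(t-\tau)-S_N(t-h-\tau)$ by the H\"older estimate of Lemma \ref{lemma-3.1} (iv) applied modewise, use $\|S_N(t)v\|\le\|v\|$ on the short interval, and then insert the Lipschitz bound from (\ref{f}) together with the a priori estimate $\mathbb E\|u_N(\tau)\|^2\le c$ furnished by Theorem \ref{thm6} (the case $\sigma=0$). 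This is a transcription of the $J_2$ argument already carried out for $u$ and yields $J_2\le c$.

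The substantive part is $J_3$. I would apply the It\^o isometry bound of Lemma \ref{xi-H}, reducing $J_3$ to a sum over $k\le N$ of $L^2(0,t)$ norms of the fractional derivative $\partial_\tau^{(1-2H_2)/2}$ of $(u_k(t-\tau)-u_k(t-h-\tau))\gamma(\tau)$, each weighted by $\|e_{W,k}(y)\|_{H_0^{(1-2H_1)/2}}^2\|e_k(x)\|^2$. The crucial point is that the stability bound $\|e_{W,k}\|_{H_0^{(1-2H_1)/2}}^2\le c\|e_k\|_{H_0^{(1-2H_1)/2}}^2$ established inside the proof of Theorem \ref{thm4} lets me discard the Wong--Zakai averaging and revert to exactly the quantities appearing in the continuous $J_3$. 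I would then split into a long-time contribution $J_{31}$ on $[0,t-h]$ and a boundary contribution $J_{32}$ on $[t-h,t]$, controlling $J_{31}$ through the H\"older estimate for $u_k(t-\tau)-u_k(t-h-\tau)$ and $J_{32}$ through the single-kernel estimate of Lemma \ref{deri} and the Beta-integral identity. Both reduce to the convergence of $\sum_{k}\rho_k^{-1/2-2\epsilon}<\infty$ under $\rho_k\simeq k^2$, with $\vartheta=(\tfrac{1-H_1}{2}+\epsilon)/s$ chosen so that $2\xi<2H_2+(H_1-1)\alpha/s$ guarantees $\xi+\alpha\vartheta<H_2$; since truncating at $N$ only removes terms, the resulting bounds are uniform in $N$.

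The main obstacle I anticipate is bookkeeping rather than conceptual: one must verify that the averaged eigenfunctions $e_{W,k}$ do not degrade the fractional Sobolev norm more than the stated stability bound permits, and that the parameter choices $\vartheta$ and $\epsilon$ remain simultaneously admissible so that the $J_{31}$ and $J_{32}$ sums are finite uniformly in $t$, $h$ and $N$. Once the $e_{W,k}$ stability bound is invoked, every remaining step is a transcription of the already-completed continuous estimate, so no new difficulty arises beyond confirming the admissibility of the exponent windows.
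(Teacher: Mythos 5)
Your proposal is correct and follows essentially the route the paper intends: the paper gives no detailed proof of this theorem, stating only that it follows "similar to the proofs of Theorems \ref{thm2} and \ref{thm3}" via the stability of $P_N$, and your decomposition (dropping the $J_1$ term because $u_N(0)=0$, reusing the $J_2$/$J_3$ arguments modewise with $S_N(t)e_k=u_k(t)e_k$, and invoking the $e_{W,k}$ stability bound from the proof of Theorem \ref{thm4}) is precisely that argument written out. The parameter bookkeeping ($\vartheta=(\tfrac{1-H_1}{2}+\epsilon)/s$ and $\xi+\alpha\vartheta<H_2$ forced by $2\xi<2H_2+(H_1-1)\alpha/s$) matches the paper's continuous-case proof, and the bounds are indeed uniform in $N$ since truncation only removes nonnegative terms.
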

The next lemma shows an error estimate between $(h(z) + A^s)^{-1}v$ and its discrete form $(h(z) + A_N^s)^{-1}P_Nv$.
\begin{lemma}\label{S-SN}
	Let $v\in H$, for any $\vartheta\in[0,1]$, $v\in H$,  then
	$$
\|S(t)v-S_N(t)P_Nv\|\leq c(N+1)^{-2s\vartheta}t^{-\alpha\vartheta}\|v\|.
	$$
\end{lemma}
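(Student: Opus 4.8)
The plan is to exploit the fact that both $S(t)$ and $S_N(t)$ are diagonal in the eigenbasis $\{e_k\}$ of $A$, which turns the operator difference into an exact tail sum over the modes discarded by the Galerkin projection. First I would expand $v=\sum_{k\ge1}v_ke_k$ with $v_k=(v,e_k)$. Since $A^se_k=\rho_k^se_k$ for all $k$ and $A_N^se_k=\rho_k^se_k$ for $k\le N$, the two resolvents act diagonally on the common eigenvectors, so the modes $k\le N$ cancel and
\begin{equation*}
(h(z)+A^s)^{-1}v-(h(z)+A_N^s)^{-1}P_Nv=\sum_{k=N+1}^\infty \frac{v_k}{h(z)+\rho_k^s}\,e_k .
\end{equation*}
Substituting this into the contour representations (\ref{St}) and (\ref{SN}) and recognizing the resulting scalar integral as $u_k(t)$, I obtain the clean identity
\begin{equation*}
S(t)v-S_N(t)P_Nv=\sum_{k=N+1}^\infty u_k(t)\,v_k\,e_k .
\end{equation*}

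Next, by Parseval and orthonormality of $\{e_k\}$ one has $\|S(t)v-S_N(t)P_Nv\|^2=\sum_{k>N}|u_k(t)|^2|v_k|^2$, so it suffices to bound $|u_k(t)|$ with the correct decay in $\rho_k$. Inserting the interpolated resolvent estimate (\ref{inter2}), namely $|(h(z)+\rho_k^s)^{-1}|\le c\rho_k^{-s\vartheta}|z|^{-\alpha(1-\vartheta)}$, into the contour integral for $u_k$, setting $\kappa=1/t$, and running the same elementary bound for the Hankel-type integral already used in Theorem \ref{Thm1} and Lemma \ref{lemma-3.1}(iv) gives
\begin{equation*}
|u_k(t)|\le c\rho_k^{-s\vartheta}\int_{\Gamma_{1/t,\pi-\theta}}|e^{zt}|\,|z|^{\alpha\vartheta-1}\,|dz|\le c\rho_k^{-s\vartheta}t^{-\alpha\vartheta}.
\end{equation*}

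Finally I would use that the eigenvalues are nondecreasing with $\rho_k\simeq k^2$: for every $k\ge N+1$ we have $\rho_k^{-s\vartheta}\le\rho_{N+1}^{-s\vartheta}\le c(N+1)^{-2s\vartheta}$, whence $|u_k(t)|\le c(N+1)^{-2s\vartheta}t^{-\alpha\vartheta}$ uniformly in $k>N$. Pulling this bound out of the tail sum and using $\sum_{k>N}|v_k|^2\le\|v\|^2$ yields the claim after taking square roots. I do not expect a genuine obstacle, since the spectral diagonalization makes the resolvent difference exact and the only quantitative input is the mode-wise decay, which is already furnished by (\ref{inter2}). The one point requiring care is the \emph{uniformity in $k$} of the contour estimate: the constant $c$ must be independent of $\rho_k$, which is transparent provided the factor $\rho_k^{-s\vartheta}$ is extracted \emph{before} performing the $|dz|$ integration.
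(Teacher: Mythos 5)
Your proposal is correct and follows essentially the same route as the paper: both reduce the difference to the tail sum $\sum_{k>N}|u_k(t)|^2|v_k|^2$ via the spectral diagonalization, invoke the interpolated bound $|u_k(t)|\leq c\rho_k^{-s\vartheta}t^{-\alpha\vartheta}$ from Lemma \ref{lemma-3.1} and (\ref{inter2}), and conclude by taking the supremum of $\rho_k^{-2s\vartheta}$ over $k\geq N+1$ together with $\rho_k\simeq k^2$. No gaps; your extra remark on extracting $\rho_k^{-s\vartheta}$ before the contour integration is exactly the uniformity the paper's constant relies on.
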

\begin{proof}
	By the inverse Laplace transform of $S(t)$ in (\ref{St}) and $S_N(t)$ in (\ref{SN}), for any given $v\in H$, we have
\begin{align*}
\|(S(t)-S_N(t)P_N)v\|^2\leq& c^2\sum_{k=N+1}^\infty  |u_k(t)|^2|v_k|^2.
\end{align*}
By using the estimate in Lemma \ref{lemma-3.1}, by interpolation we have
$$
\|(S(t)-S_N(t)P_N)v\|^2\leq c^2 \sum_{k=N+1}^\infty \rho_k^{-2s\vartheta} t^{-2\alpha\vartheta}|v_k|^2,
$$
this leads that
\begin{align*}
	\|S(t)-S_N(t)v\|^2\leq& c\sup_{k\geq N+1}\rho_k^{-2s\vartheta}  t^{-2\alpha\vartheta}\|v\|^2
\end{align*}
Hence the desired estimate is shown.

\end{proof}

We now state the error estimate.
\begin{theorem}\label{thm8}
	Let $u$ and $u_N$ be the solutions of problem (1.1) and (\ref{approximation}), respectively. Let $4sH_2>\alpha$, then for $\vartheta\in (\frac{1}{4s},\frac{H_2}{\alpha})$, there holds
	$$
	\mathbb E\|u(t)-u_N(t)\|^2\leq  c(N+1)^{-4s\vartheta} +c(N+1)^{-4\alpha\vartheta+1}h^{2H_1-1}.
	$$
\end{theorem}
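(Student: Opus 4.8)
The plan is to compare the two mild (variation-of-constants) representations. Since the initial datum is fixed to $g\equiv0$ from Section~\ref{sect4} on, I would write
$$u(t)=\int_0^t S(t-\tau)f(u)\,d\tau+\int_0^t\!\!\int_D F(t-\tau,x,y)\gamma(\tau)\xi^{H_1,H_2}(dy,d\tau)$$
together with the discrete representation (\ref{120}) for $u_N$, subtract, and use $\|a+b\|^2\le 2\|a\|^2+2\|b\|^2$ to split $\mathbb E\|u-u_N\|^2\le 2E_{\mathrm{det}}+2E_{\mathrm{sto}}$ into a drift part and a noise part. The two parts call on different tools, and a Gronwall step at the end absorbs the self-referential term.

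For the drift part I would insert $\pm\,S_N(t-\tau)P_Nf(u)$ so that
$$S(t-\tau)f(u)-S_N(t-\tau)P_Nf(u_N)=\bigl[S(t-\tau)-S_N(t-\tau)P_N\bigr]f(u)+S_N(t-\tau)P_N\bigl[f(u)-f(u_N)\bigr].$$
The first summand is exactly what Lemma~\ref{S-SN} controls, contributing the factor $(N+1)^{-2s\vartheta}(t-\tau)^{-\alpha\vartheta}$; combined with the growth bound in (\ref{f}), the a priori bound $\mathbb E\|u(\tau)\|^2\le c$ from Theorem~\ref{thm2}, and integrability of $(t-\tau)^{-2\alpha\vartheta}$ (valid since $\alpha\vartheta<H_2\le\tfrac12$), a Cauchy--Schwarz estimate delivers the first term $c(N+1)^{-4s\vartheta}$. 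The second summand is handled by the Lipschitz bound in (\ref{f}) and the contraction $\|S_N(t)P_Nv\|\le\|v\|$, yielding $ct\int_0^t\mathbb E\|u-u_N\|^2\,d\tau$, which feeds the Gronwall argument.

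The stochastic part is the crux. The key algebraic observation is that $S_N(t)P_Ne_k=u_k(t)e_k$ for $k\le N$, so $F_{W,N}(t,x,y)=\sum_{k=1}^N u_k(t)e_k(x)e_{W,k}(y)$ and
$$F-F_{W,N}=\sum_{k=N+1}^\infty u_k\,e_k(x)e_k(y)+\sum_{k=1}^N u_k\,e_k(x)\bigl(e_k(y)-e_{W,k}(y)\bigr),$$
i.e. a spectral-truncation tail plus a spatial Wong--Zakai defect. Applying the It\^o isometry of Lemma~\ref{xi-H} and the orthonormality of $\{e_k\}$ in $x$ reduces each piece to a $k$-series whose summands factor as $\|\partial_\tau^{(1-2H_2)/2}u_k(t-\cdot)\gamma\|_{L^2(0,t)}^2$ times a spatial norm. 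For the temporal factor I would invoke Lemma~\ref{deri} with $\varsigma=(1-2H_2)/2$, which gives $\|\partial_\tau^{(1-2H_2)/2}\rho_k^{s\vartheta}u_k(t-\cdot)\gamma\|_{L^2}\le c$ precisely under $\alpha\vartheta<H_2$; for the spatial factors I would use $\|e_k\|_{H_0^{(1-2H_1)/2}}^2=\rho_k^{(1-2H_1)/2}$ on the tail, and on the defect the inverse estimate on the piecewise-constant space $\|e_{W,k}\|_{H_0^{(1-2H_1)/2}}^2\le ch^{2H_1-1}\|e_{W,k}\|^2\le ch^{2H_1-1}$ (which dominates $\|e_k\|_{H_0^{(1-2H_1)/2}}^2$ for $k\le N$ under the natural coupling $h\lesssim N^{-1}$). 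With $\rho_k\simeq k^2$, the tail series sums to an $N$-decaying quantity and the defect series, summed over $k\le N$, produces the factor $(N+1)^{-4\alpha\vartheta+1}h^{2H_1-1}$; the hypothesis $4sH_2>\alpha$ is exactly what makes the window $\vartheta\in(\tfrac1{4s},\tfrac{H_2}{\alpha})$ nonempty, so all exponent constraints hold at once.

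I expect the hard part to be this balancing inside the noise term: the Riemann--Liouville smoothing of Lemma~\ref{deri} buys decay in $\rho_k$ only subject to the ceiling $\alpha\vartheta<H_2$, while the rough spatial noise forces the growing weight $\rho_k^{(1-2H_1)/2}$ and the inverse-estimate factor $h^{2H_1-1}$, so pinning the series down to the claimed powers of $N$ and $h$ hinges on choosing $\vartheta$ (and the auxiliary $\epsilon$) in the narrow admissible range and on using $\rho_k\simeq k^2$ sharply. Once both parts are bounded, I would collect everything and apply the Gronwall--Henry inequality to the residual $\int_0^t\mathbb E\|u-u_N\|^2\,d\tau$, arriving at $\mathbb E\|u(t)-u_N(t)\|^2\le c(N+1)^{-4s\vartheta}+c(N+1)^{-4\alpha\vartheta+1}h^{2H_1-1}$.
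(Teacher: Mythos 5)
Your architecture coincides with the paper's: the same drift/noise splitting, the same insertion of $\pm S_N(t-\tau)P_N$ handled by Lemma \ref{S-SN} (giving the $(N+1)^{-4s\vartheta}$ term), the same reduction of the stochastic term via the It\^{o} isometry of Lemma \ref{xi-H} to a $k$-series with temporal factor controlled by Lemma \ref{deri} under $\alpha\vartheta<H_2$, and a Gronwall step at the end. Your decomposition $F-F_{W,N}=\sum_{k>N}u_k e_k\otimes e_k+\sum_{k\le N}u_k e_k\otimes(e_k-e_{W,k})$ is in fact more explicit than the paper's, which writes only a tail $\sum_{k=N+1}^\infty$ weighted by $\|e_{W,k}\|^2_{H_0^{(1-2H_1)/2}}\le ch^{2H_1-1}$ and obtains $Q_2\le ch^{2H_1-1}\sum_{k>N}\rho_k^{-2s\vartheta-\epsilon}\le c(N+1)^{-4s\vartheta+1}h^{2H_1-1}$ directly.

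There is, however, a concrete gap in your noise estimate: you attribute the factor $(N+1)^{-4\alpha\vartheta+1}h^{2H_1-1}$ to the defect series $\sum_{k\le N}$, controlled only by the inverse estimate $\|e_{W,k}\|^2_{H_0^{(1-2H_1)/2}}\le ch^{2H_1-1}$. Summing $\rho_k^{-2s\vartheta}\cdot ch^{2H_1-1}$ over $k\le N$ yields at best $ch^{2H_1-1}$ with no negative power of $N$ whatsoever, and since $2H_1-1\le 0$ this quantity \emph{diverges} as $h\to0$; a partial sum of a decaying sequence cannot manufacture $N$-decay. The negative power of $N$ in the second term of the statement comes from the \emph{tail}: $\sum_{k=N+1}^\infty\rho_k^{-2s\vartheta}\simeq(N+1)^{1-4s\vartheta}$, which is precisely where the hypothesis $\vartheta>\frac{1}{4s}$ is consumed, multiplied by the $h^{2H_1-1}$ weight (either via $\|e_{W,k}\|^2_{H_0^{(1-2H_1)/2}}\le ch^{2H_1-1}$ as in the paper, or via $\rho_k^{(1-2H_1)/2}\lesssim h^{2H_1-1}$ under the coupling you mention). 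For the genuine defect piece $e_k-e_{W,k}$ the inverse estimate is not enough: one must use the approximation property $\|e_k-e_{W,k}\|^2_{H_0^{(1-2H_1)/2}}\le ch^{2\sigma+2H_1-1}\rho_k^{\sigma}$ with $2\sigma+2H_1-1>0$, exactly as in the bound for $R_{21}$ in the proof of Theorem \ref{thm5}, so that a positive power of $h$ appears and the sum over $k\le N$ closes under $2\sigma-4s\vartheta<-1$. With that repair (and the coupling $h\lesssim N^{-1}$ to convert $h^{2\sigma}$ into the stated power of $N+1$) your argument goes through and recovers the paper's rate.
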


\begin{proof}
To estimate the error $e(t) := u(t)- u_N(t)$, we have
\begin{align*}
\mathbb E\|e(t)\|^2\leq & 2\mathbb E\left\| \int_0^t S(t-\upsilon)f(u)d\upsilon-\int_0^t S_N(t-\upsilon)P_Nf(u_N)d\upsilon\right\|^2\\
&+2\mathbb E\left\|\int_0^t\int_D (F(t-\upsilon,x,y)-F_{W,N}(t-\upsilon,x,y))\gamma(\upsilon)\xi^{H_1,H_2}(dy,d\upsilon)\right\|^2\\ \leq& 2Q_1+2Q_2.
\end{align*}
Accordingly Lemma \ref{S-SN}, to estimate $Q_1$,  we have
\begin{align*}
Q_1\leq  & 2 \mathbb E\left\| \int_0^t( S(t-\upsilon)-S_N(t-\upsilon)P_N)f(u_N)d\upsilon\right\|^2+2\mathbb E\left\|\int_0^t S (t-\upsilon)( f(u)- f(u_N))d\upsilon\right\|^2\\
\leq & 2c^2(N+1)^{-4s\vartheta}  \mathbb E\left(\int_0^t(t-\upsilon)^{-2\alpha\vartheta} \left\|f(u)\right\| d\upsilon\right)^2+2c^2  \mathbb E\left(\int_0^t \mathbb E\left\|f(u)- f(u_N)\right\| d\upsilon\right)^2\\
\leq &2c^2(N+1)^{-4s\vartheta}t \int_0^t(t-\upsilon)^{-\alpha\vartheta}(1+\mathbb E\left\|u(\upsilon)\right\|^2)d\upsilon+2c^2 t\int_0^t \mathbb E\left\|e(\upsilon)\right\|^2d\upsilon
\end{align*}
To estimate $Q_2$, we have
\begin{align*}
Q_2\leq   &  c\sum_{k=N+1}^\infty  \int_0^t \left|\partial_\tau^{\frac{1-2H_2}{2} }  u_k(t-\tau)\gamma(\tau)\right|^2 d\tau\|e_{W,k}(y)\|_{H_0^{\frac{1-2H_1}{2}}(D)}^2\|e_k(x)\|^2\\
\leq&   ch^{2H_1-1}\sum_{k=N+1}^\infty\rho_{k }^{-2s\vartheta-\epsilon}  \int_0^t \left|\partial_\tau^{\frac{1-2H_2}{2} } \rho_k^{ s\vartheta+\epsilon/2} u_k(t-\tau)\gamma(\tau)\right|^2 d\tau \\
\leq &c(N+1)^{-4s\vartheta+1}h^{2H_1-1},
\end{align*}
for some small $\epsilon\in(0,2s\vartheta-1/2).$ We thus get the main result.
\end{proof}

\subsection{Temporal discretization}
In this subsection, we use the backward Euler (BE) convolution
quadrature to discretize the Riemann-Liouville fractional derivative. 
%
$$
\partial_t^{1-\alpha } v(t_n)\approx\sum_{i=0}^{n-1} d^{(1-\alpha)}_iv(t_{n-i}),
$$
where
$$
\sum_{i=0}^{\infty} d_i^{(1-\alpha)}\zeta^i=(\delta_\tau(\zeta))^{1-\alpha},~~~\delta_\tau(\zeta)=\frac{1-\zeta}{\tau}.
$$
The fully discrete scheme of problem (\ref{regu}) can be written as
\begin{equation}\label{approximation2}
\frac{u_N^n-u_N^{n-1}}{\tau}+\lambda\sum_{i=0}^{n-1} d^{(1-\beta)}_i  u^{n-i}+\mu\sum_{i=0}^{n-1} d^{(1-\beta)}_iA_N^s u^{n-i}= P_Nf(u_N^{n-1})+P_N\xi^{H_1,H_2}_{W,n},
\end{equation}
where $
\xi^{H_1,H_2}_{W,n}=\xi^{H_1,H_2}_{ n}(t_n).$
Let $\bar{\mathscr F}(t)$ be defined by $\bar{\mathscr F}(t):=f(u_N^{j-1}(t))$ for $t\in (t_j,t_{j+1}]$ and $\bar{\mathscr F}(t):=0$ for $t=t_0$, and $\mathscr F(t):=f(u_N (t))$. Let $\mathscr F_N:=P_N\mathscr F$ and $\bar{\mathscr F}_N:=P_N\bar{\mathscr F}$, by a transformation, 
$$
\sum_{n=1}^{\infty}\bar{\mathscr F}_N(t_n)e^{-zt_n}=\frac{z}{e^{z\tau}-1}\widehat{\bar{\mathscr F}}_N(z),~~~\sum_{n=1}^{\infty}\xi_{W,n}^{H_1,H_2}e^{-zt_n}=\frac{z}{e^{z\tau}-1}\widehat{\xi}_{W }^{H_1,H_2}(z),
$$
where $\widehat{v}_N$ is also given by
$$
\widehat{v}_N(z)=\sum_{n=0}^{\infty} v_N(t_n)z^n.
$$

Multiplying $\zeta^n$ on both sides and summing it from $n=1$ to infinity, we have
\begin{align*}
\sum_{n=1}^\infty\frac{u_N^n-u_N^{n-1}}{\tau}\zeta^n+&\lambda\sum_{n=1}^\infty\sum_{i=0}^{n-1} d^{(1-\beta)}_i  u^{n-i}\zeta^n+\mu\sum_{n=1}^\infty\sum_{i=0}^{n-1} d^{(1-\beta)}_iA_N^s u^{n-i}\zeta^n\\ =&\sum_{n=1}^\infty P_Nf(u_N^{n-1})\zeta^n+\sum_{n=1}^\infty P_N\xi^{H_1,H_2}_{W,n}\zeta^n, 
\end{align*}
 by using the definition of $d^{(1-\varsigma)}_i$ for $\varsigma=\beta,\alpha$, it follows that
\begin{align*}
\delta_\tau(\zeta)\sum_{n=1}^\infty u_N^n\zeta^n&+\lambda (\delta_\tau(\zeta))^{1-\beta}\sum_{n=1}^\infty u_N^n\zeta^n+\mu(\delta_\tau(\zeta))^{1-\alpha}A_N^s\sum_{n=1}^\infty u_N^n\zeta^n\\
=&\sum_{n=1}^\infty P_Nf(u_N^{n-1})\zeta^n+\sum_{n=1}^\infty P_N\xi^{H_1,H_2}_{W,n}\zeta^n.
\end{align*}
By a calculation, we get
\begin{equation}\label{121}
\begin{aligned}
	u_N^n 
	=&\mu^{-1}(\delta_\tau(\zeta))^{\alpha-1}(h(\delta_\tau(\zeta)+A_N^s))^{-1}\sum_{n=1}^\infty P_Nf(u_N^{n-1})\zeta^n\\
	&+\mu^{-1}(\delta_\tau(\zeta))^{\alpha-1}(h(\delta_\tau(\zeta)+A_N^s))^{-1}\sum_{n=1}^\infty P_N\xi^{H_1,H_2}_{W,n}\zeta^n.
\end{aligned}
\end{equation}
For any $\theta\in(0,\pi)$,  let
$$
\Gamma_{\kappa, \theta}^\tau=\{z\in\mathbb C:~|z|=\kappa,~|{\rm arg} z|\leq  \theta\}\cup\Big\{z\in\mathbb C:~\kappa\leq | z|\leq \frac{\pi}{\tau\sin\theta},~|{\rm arg} z|=  \theta\Big\},
$$
it is clear that $\Gamma_{\kappa, \pi-\theta}^\tau\subset \Gamma_{\kappa, \pi-\theta}\in \Sigma_{\kappa,\theta}$, and then
$$
u_N^n=\int_0^{t_n}  \mathcal S _N(t_n-\upsilon)\bar{\mathscr F}_N(\upsilon)d\upsilon+\int_0^{t_n}\int_D \mathcal{F}_{W,N}(t_n-\upsilon,x,y)\xi_{W}^{H_1,H_2}(dy,d\upsilon),
$$
where
$$
\mathcal{F}_{W,N}(t ,x,y)=\sum_{k=1}^N \mathcal S _N(t )P_Ne_k(x)e_{W,k}(y).
$$

Now let $\theta=\pi/2-\alpha\omega\in(0,\pi/2)$, it follows that
$$
\mathscr S _N(t)=\frac{1}{2\pi i}\int_{\Gamma_{\kappa,\pi- \theta}^\tau}e^{z t}\mu^{-1}(\delta_\tau(e^{-z\tau}))^{\alpha-1}(h(\delta_\tau(e^{-z\tau}))+A_N^s))^{-1}\frac{z}{e^{z\tau}-1}dz.
$$
In particular, for $u_N\in H_N$, we have
$$
\mathscr S _N(t)u_N=\sum_{k=1}^{N} u_{k,1}(t)(u_N,e_k)e_k,
$$
where
$$
u_{k,1}(t)=\frac{1}{2\pi i}\int_{\Gamma_{\kappa,\pi- \theta}^\tau}e^{z t}\mu^{-1}(\delta_\tau(e^{-z\tau}))^{\alpha-1}(h(\delta_\tau(e^{-z\tau}))+\lambda_k^s))^{-1}\frac{z}{e^{z\tau}-1}dz.
$$
\begin{lemma}\cite{Gunzburger18}\label{Gun}
	Let $\alpha  \in  (0, 1)$ and $\theta  \in 
	\bigl(  \pi /	2 , arccot	\bigl( -2/\pi	\bigr) \bigr) $,
	where arccot means the inverse function of cot, and a fixed $\xi  \in  (0, 1)$. Then, when
	$z$ lies in the region enclosed by $\Gamma^\tau_\xi
	= \{ z = -\ln (\xi )/\tau  + iy : y \in  \mathbb R,~ {\rm and}\ | y|  \leq  \pi /\tau \},$
	$\Gamma^\tau_{\theta,\kappa}$, and the two lines $\mathbb R \pm  i\pi /\tau$, whenever $0 < \kappa  \leq  \min(1/T,-\ln(\xi )/\tau )$, $\delta_\tau  (e^{-z\tau})$
	and $(\delta_\tau  (e^{-z\tau}) + A)^{-1}$ are both analytic. Additionally, there hold
	\begin{align*}
&\delta_\tau  (e^{-z\tau})\in \Sigma_\theta, \quad
c_0|z|\leq |\delta_\tau  (e^{-z\tau})|\leq c_1|z|,\\
&|\delta_\tau  (e^{-z\tau})-z|\leq c_2\tau|z|^2,\quad 
|(\delta_\tau  (e^{-z\tau}))^\alpha-z^\alpha|\leq c_3\tau|z|^{\alpha+1},
	\end{align*}
	for all $z\in  \Gamma^\tau_{\theta,\kappa},$ where $\kappa  \in  (0, \min(1/T, -\ln(\xi )/\tau ))$ and the constants $c_i$, $i=0,\cdots,3$ are independent
	of $\tau$.
\end{lemma}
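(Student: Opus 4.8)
The plan is to reduce all four assertions to pointwise estimates for the single scalar function $g(w) := (1-e^{-w})/w$ evaluated at $w = z\tau$, since $\delta_\tau(e^{-z\tau}) = \tau^{-1}(1-e^{-z\tau}) = z\,g(z\tau)$. The geometric heart of the argument is the observation that the constraint ${|\rm Im}\,z| \le \pi/\tau$ built into the region — via the horizontal lines $\mathbb R \pm i\pi/\tau$ and the truncation $|z| \le \pi/(\tau\sin\theta)$ of the rays of $\Gamma^\tau_{\theta,\kappa}$ — forces $|w| = |z|\tau$ to remain bounded by a constant $C_\theta$ depending only on $\theta$. On such a bounded set $g$ is analytic, and every estimate becomes a statement about the Taylor remainder of $g$ near $w=0$, uniform in $\tau$ precisely because the $w$-region is $\tau$-independent.

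First I would establish the sector inclusion $\delta_\tau(e^{-z\tau}) \in \Sigma_\theta$, which simultaneously yields the analyticity of $(\delta_\tau(e^{-z\tau}) + A)^{-1}$: because $-A$ generates a bounded analytic semigroup, its resolvent is analytic and bounded on a sector of opening larger than $\theta$, so it suffices that $\delta_\tau(e^{-z\tau})$ never leaves $\Sigma_\theta$ as $z$ ranges over the region. Writing $z = \rho e^{\pm i\theta}$ on the bounding rays and using $1 - e^{-w} = \int_0^1 w\, e^{-sw}\,ds$, I would track how $\arg(1-e^{-z\tau})$ deviates from $\arg w = \pm\theta$; the precise upper cutoff $\theta < {\rm arccot}(-2/\pi)$ is exactly what keeps $\arg(\delta_\tau(e^{-z\tau}))$ within $[-\theta,\theta]$ over the whole region (the constant $2/\pi$ entering through the control of the imaginary part over $|{\rm Im}\,w|\le\pi$). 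I expect this argument-tracking to be the main obstacle, as it is the only step that is not a soft compactness or Taylor argument.

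The two-sided bound $c_0|z| \le |\delta_\tau(e^{-z\tau})| \le c_1|z|$ then follows from $\delta_\tau(e^{-z\tau}) = z\,g(w)$ together with $0 < \min|g| \le |g(w)| \le \max|g| < \infty$ on the compact $w$-region: $g$ is continuous and nonvanishing there, since its only zeros sit at $w = 2\pi i k$ with $k \ne 0$, which are excluded by $|{\rm Im}\,w| \le \pi$ and $g(0)=1$, so $|g|$ attains positive extrema. For the consistency estimate I would expand $1 - e^{-w} - w = O(w^2)$, giving $g(w) - 1 = O(w)$ uniformly on the bounded region, whence $|\delta_\tau(e^{-z\tau}) - z| = |z|\,|g(w)-1| \le c_2\,\tau|z|^2$.

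Finally, for the fractional consistency bound I would write $(\delta_\tau(e^{-z\tau}))^\alpha - z^\alpha = z^\alpha\bigl(g(w)^\alpha - 1\bigr)$; since the sector inclusion keeps $g(w)$ away from the branch cut of $\zeta \mapsto \zeta^\alpha$ (and, by the two-sided bound, away from $0$), this map is Lipschitz near $g=1$, so $|g(w)^\alpha - 1| \le c\,|g(w)-1| \le c\,\tau|z|$, and multiplying by $|z^\alpha|$ gives $c_3\,\tau|z|^{\alpha+1}$. All constants $c_i$ depend only on $\theta$ and $\alpha$ because they arise as extrema of fixed analytic functions on a bounded $w$-domain independent of $\tau$, which is exactly the uniformity in $\tau$ asserted in the statement.
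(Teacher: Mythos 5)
This lemma is imported verbatim from \cite{Gunzburger18}; the paper gives no proof of its own, so there is nothing internal to compare against. Your outline is essentially the standard argument used in that reference and its antecedents: writing $\delta_\tau(e^{-z\tau})=z\,g(z\tau)$ with $g(w)=(1-e^{-w})/w$ and exploiting that the truncation $|z|\le \pi/(\tau\sin\theta)$ confines $w=z\tau$ to a fixed compact set on which $g$ is analytic, nonvanishing (its zeros $2\pi i k$, $k\neq 0$, lie outside $|\operatorname{Im}w|\le\pi$), and equal to $1+O(w)$, which yields the two-sided bound and the two consistency estimates with $\tau$-independent constants. You correctly single out the only genuinely delicate step — the sector inclusion $\delta_\tau(e^{-z\tau})\in\Sigma_\theta$, where the hypothesis $\theta<\operatorname{arccot}(-2/\pi)$ enters through explicit tracking of $\arg(1-e^{-w})$ — but you only sketch it; carrying that computation out (and, for the last estimate, justifying the branch identity $(zg)^\alpha=z^\alpha g^\alpha$ or replacing it by the integral bound $|a^\alpha-b^\alpha|\le \alpha|a-b|\sup|\zeta|^{\alpha-1}$ along a path avoiding the origin) is what a complete proof would still require.
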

Let
$$
U_{k,1}(t)=\frac{1}{\tau}\int_{t_{j}-1}^{t_{j }} u_{k,1}(r)dr,\quad t\in [ t_{j-1 },t_{j}),
$$
and $U_{k,1}(t_o)=u_{k,1,o}$, $o=0,1,\cdots$. Thus, it follows that
\begin{align*}
	U_{k,1,o}=&\frac{1}{\tau}\frac{1}{2\pi i}\int_{t_o}^{t_{o+1}} \int_{\Gamma_{\kappa, \pi-\theta}^\tau} e^{zr}\widehat{u}_{k,1}(z)dzdr 
	=  \frac{1}{2\pi i} \int_{\Gamma_{\kappa, \pi-\theta}^\tau} \frac{e^{zt_{o+1}-e^{zt_o}}}{z\tau} \widehat{u}_{k,1}(z)dz \\
	=&\frac{1}{2\pi i} \int_{\Gamma_{\kappa, \pi-\theta}^\tau}  e^{zt_o}  \mu^{-1}(\delta_\tau(e^{-z\tau}))^{\alpha-1}(h(\delta_\tau(e^{-z\tau}))+\lambda_k^s))^{-1} dz.
\end{align*}
By the same manner in \cite[Proposition 3.2]{Gunzburger18}, we have
$$
\sum_{o=1}^\infty 	U_{k,1,o} \zeta^o=\frac{1}{\tau}\mu^{-1}(\delta_\tau(\zeta))^{\alpha-1}(h(\delta_\tau(\zeta))+\lambda_k^s))^{-1}. 
$$
Moreover, the Laplace transform shows that
\begin{align*}
	\widetilde{U}_{k,1 }(z)=& \sum_{o=1}^\infty U_{k,1,o}\int_{t_o}^{t_{o+1}}e^{-zt} dt\\
=& \sum_{o=1}^\infty U_{k,1,o}e^{-zt_o}\frac{1-e^{-z\tau}}{z}= (\mu z)^{-1}(\delta_\tau(e^{-z\tau}))^{\alpha }(h(\delta_\tau(e^{-z\tau}))+\lambda_k^s))^{-1} .
\end{align*}

\begin{lemma}\label{bouned}
Let
$$
h_\delta(z,\rho_k^s):= z^{\alpha-1}(h(z)+\rho_k^s)^{-1}-     (\delta_\tau(e^{-z\tau}))^{\alpha-1}(h(\delta_\tau(e^{-z\tau}))+\rho_k^s)^{-1}\frac{z\tau }{e^{z\tau}-1} ,
$$
then, we have
$$
\left| h_\delta(z,\rho_k^s) \right|\leq   \frac{c\mu |z|^{\alpha }\tau }{ |z|^\alpha+\rho_k^s} ,~~z\in \Gamma_{\kappa, \pi-\theta}^\tau.
$$
\end{lemma}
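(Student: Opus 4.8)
The plan is to write the discrete kernel as a perturbation of the continuous one and to peel off one factor of $\tau|z|$ in each error contribution. Set $\delta:=\delta_\tau(e^{-z\tau})$ and $G(w):=w^{\alpha-1}(h(w)+\rho_k^s)^{-1}$, so that $h_\delta(z,\rho_k^s)=G(z)-G(\delta)\,\frac{z\tau}{e^{z\tau}-1}$. I would first record the baseline resolvent estimate used throughout: since Lemma \ref{es1} gives $h(w)\in\Sigma_{\pi-\theta}$ together with $|h(w)|\ge c\mu^{-1}|w|^\alpha$, and since $\rho_k^s\ge 0$ is real, the sectorial bound $|h(w)+\rho_k^s|\ge c(|h(w)|+\rho_k^s)$ yields
\[
|(h(w)+\rho_k^s)^{-1}|\le \frac{c\mu}{|w|^\alpha+\rho_k^s},\qquad w\in\Sigma_{\pi-\theta}.
\]
Combined with Lemma \ref{Gun}, which guarantees $\delta\in\Sigma_\theta$ and $c_0|z|\le|\delta|\le c_1|z|$ on $\Gamma_{\kappa,\pi-\theta}^\tau$, this gives the uniform size estimate $|G(w)|\le c\mu\,|z|^{\alpha-1}/(|z|^\alpha+\rho_k^s)$ for $w\in\{z,\delta\}$. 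The target bound is exactly this baseline size multiplied by $|z|\tau$, which is the guiding principle for the two-term split below.

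Next I would split $h_\delta=\bigl[G(z)-G(\delta)\bigr]+G(\delta)\bigl[1-\tfrac{z\tau}{e^{z\tau}-1}\bigr]$. For the second bracket I would invoke the standard convolution-quadrature estimate $|1-\frac{z\tau}{e^{z\tau}-1}|\le c|z|\tau$, valid because $z\tau$ stays in a bounded region avoiding the poles of $(e^{z\tau}-1)^{-1}$ on $\Gamma_{\kappa,\pi-\theta}^\tau$; multiplying by the baseline size of $G(\delta)$ produces exactly $c\mu|z|^\alpha\tau/(|z|^\alpha+\rho_k^s)$. For the first bracket I would further decompose
\[
G(z)-G(\delta)=\bigl(z^{\alpha-1}-\delta^{\alpha-1}\bigr)(h(z)+\rho_k^s)^{-1}+\delta^{\alpha-1}\bigl[(h(z)+\rho_k^s)^{-1}-(h(\delta)+\rho_k^s)^{-1}\bigr].
\]
For the first summand, the mean value theorem together with $|\delta-z|\le c\tau|z|^2$ (Lemma \ref{Gun}) gives $|z^{\alpha-1}-\delta^{\alpha-1}|\le c\tau|z|^\alpha$, and pairing this with the resolvent bound again reproduces the target order.

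The remaining summand is where the resolvent identity does the work: writing $(h(z)+\rho_k^s)^{-1}-(h(\delta)+\rho_k^s)^{-1}=(h(\delta)+\rho_k^s)^{-1}\bigl(h(\delta)-h(z)\bigr)(h(z)+\rho_k^s)^{-1}$ and using $h(w)=\mu^{-1}w^\alpha+\mu^{-1}\lambda w^{\alpha-\beta}$ with $|\delta^\alpha-z^\alpha|\le c\tau|z|^{\alpha+1}$ and the analogous $|\delta^{\alpha-\beta}-z^{\alpha-\beta}|\le c\tau|z|^{\alpha-\beta+1}$ from Lemma \ref{Gun} and the mean value theorem, I get $|h(\delta)-h(z)|\le c\mu^{-1}\tau|z|^{\alpha+1}$. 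The two resolvent factors each contribute $c\mu/(|z|^\alpha+\rho_k^s)$, and multiplying out gives $c\mu\tau|z|^{2\alpha}/(|z|^\alpha+\rho_k^s)^2$; since $|z|^{2\alpha}/(|z|^\alpha+\rho_k^s)^2\le |z|^\alpha/(|z|^\alpha+\rho_k^s)$, this again collapses to the claimed bound, and summing the three contributions finishes the proof. I expect the main obstacle to be the bookkeeping of the angle conventions, so that $\delta$ and $h(\delta)$ genuinely lie in sectors where the resolvent estimate applies uniformly along the truncated contour $\Gamma_{\kappa,\pi-\theta}^\tau$, together with verifying that the quadrature factor $z\tau/(e^{z\tau}-1)$ is analytic and $O(|z|\tau)$-close to $1$ there; once these are in place, the powers of $|z|$ and $\tau$ match by design.
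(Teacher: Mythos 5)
Your proposal is correct and follows essentially the same route as the paper's proof: the same three-term splitting into the quadrature-factor error $|1-\tfrac{z\tau}{e^{z\tau}-1}|\lesssim\tau|z|$, the resolvent difference handled via the resolvent identity with $|h(z)-h(\delta)|\lesssim\tau|z|\,|h(z)|$, and the power difference $|z^{\alpha-1}-\delta^{\alpha-1}|\lesssim\tau|z|^{\alpha}$, each paired with the baseline bound $|(h(w)+\rho_k^s)^{-1}|\le c\mu/(|w|^{\alpha}+\rho_k^s)$ for $w\in\{z,\delta\}$. The only cosmetic difference is which factor the bounded weight $z\tau/(e^{z\tau}-1)$ multiplies in the initial split, which does not affect any estimate.
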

\begin{proof}
By the triangle inequality, we have
\begin{align*}
 	\left| h_\delta(z,\rho_k^s)\right| 
\leq & \left| 1-\frac{z\tau }{e^{z\tau}-1}\right|	\left| z^{\alpha-1}(h(z)+\rho_k^s)^{-1}\right|\\
&+\left| \frac{z\tau }{e^{z\tau}-1}\right||z|^{\alpha-1} \left| (h(z)+\rho_k^s)^{-1}-(h(\delta_\tau(e^{-z\tau}))+\rho_k^s)^{-1}\right|\\
&+\left| \frac{z\tau }{e^{z\tau}-1}\right||z^{\alpha-1}-(\delta_\tau(e^{-z\tau}))^{\alpha-1}| \left|  (h(\delta_\tau(e^{-z\tau}))+\rho_k^s)^{-1}\right|.
\end{align*}
The Taylor expansion $\left| 1-\frac{z\tau }{e^{z\tau}-1}\right|\leq c|z\tau|$ and $\left| h(z)+\rho_k^s \right|\geq c\mu^{-1}(|z|^\alpha+\rho_k^s)$, it follows that
\begin{align*}
 \left| 1-\frac{z\tau }{e^{z\tau}-1}\right|	\left| z^{\alpha-1}(h(z)+\rho_k^s)^{-1}\right|\leq \frac{\mu |z|^{\alpha }\tau }{ |z|^\alpha+\rho_k^s}.
\end{align*}
In view of Lemma \ref{Gun}, form $\left|  \frac{z\tau }{e^{z\tau}-1}\right|\leq c $ for $z\in  \Gamma_{\kappa, \pi-\theta}^\tau$, the second term of above inequality is estimated as
\begin{align*}
& \left| \frac{z\tau }{e^{z\tau}-1}\right||z|^{\alpha-1} \left|h(z)-(h(\delta_\tau(e^{-z\tau}))\right| \left|(h(z)+\rho_k^s)^{-1} (h(\delta_\tau(e^{-z\tau}))+\rho_k^s)^{-1}\right|\\
\leq & c\tau |z|^{\alpha}(|h(z)|+|h(\delta_\tau(e^{-z\tau}))|)\left|(h(z)+\rho_k^s)^{-1} (h(\delta_\tau(e^{-z\tau}))+\rho_k^s)^{-1}\right|\\
\leq& c\tau |z|^{\alpha}
(|(h(\delta_\tau(e^{-z\tau}))+\rho_k^s)^{-1}|+ |(h(z)+\rho_k^s)^{-1} |)\\
\leq& \frac{c\mu |z|^{\alpha }\tau }{ |z|^\alpha+\rho_k^s}.
\end{align*}
Sine the angle condition
arg($z^\alpha) \leq \alpha(\pi-\theta)  <\pi$ and arg($\delta_\tau(e^{-z\tau})^\alpha  )\leq \alpha(\pi-\theta)  <\pi$ satisfy the requirements in Lemme \ref{Gun}, it follows that
\begin{align*}
 &\left| \frac{z\tau }{e^{z\tau}-1}\right||z^{\alpha-1}-(\delta_\tau(e^{-z\tau}))^{\alpha-1}| \left|  (h(\delta_\tau(e^{-z\tau}))+\rho_k^s)^{-1}\right|\\
 \leq& c(|z^{\alpha }-(\delta_\tau(e^{-z\tau}))^{\alpha }| |z|^{-1}+|z^{-1}-\delta_\tau(e^{-z\tau}))^{-1}| |\delta_\tau(e^{-z\tau})) |^\alpha) \left|  (h(\delta_\tau(e^{-z\tau}))+\rho_k^s)^{-1}\right|\\
 \leq &c \tau|z|^{\alpha} \left|  (h(\delta_\tau(e^{-z\tau}))+\rho_k^s)^{-1}\right|\\
 \leq& \frac{c\mu |z|^{\alpha }\tau }{ |z|^\alpha+\rho_k^s}.
\end{align*}
Together with above arguments, the proof is complete.

\end{proof}

\begin{lemma}\label{12121}
Let  . For any $\tau  < \tau^\ast$  (the value of $\tau^\ast$  depends on $\rho_k^s$), there holds
$$
|\rho_k^{s\vartheta}  (\mu z)^{-1}(\delta_\tau(e^{-z\tau}))^{\alpha }(h(\delta_\tau(e^{-z\tau}))+\rho_k^s))^{-1}|\leq c|z|^{\alpha\vartheta-1}  e^{\vartheta \alpha|z|\tau},~~z\in \Gamma_{\kappa, \pi-\theta}\setminus \Gamma_{\kappa, \pi-\theta}^\tau,
$$
for $\vartheta\in[0,1]$, $s\in(0,1)$.
\end{lemma}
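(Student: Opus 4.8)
The plan is to abbreviate $w:=\delta_\tau(e^{-z\tau})$ and to reduce the claim to two facts: a two‑sided control of $|w|$ in terms of $|z|$ on the tail $\Gamma_{\kappa,\pi-\theta}\setminus\Gamma^\tau_{\kappa,\pi-\theta}$, and a lower bound on the modulus of the symbol $w^\alpha+\lambda w^{\alpha-\beta}+\mu\rho_k^s$. Writing $h(w)+\rho_k^s=\mu^{-1}(w^\alpha+\lambda w^{\alpha-\beta}+\mu\rho_k^s)$, the quantity to be estimated equals
$$
\frac{1}{|z|}\,\rho_k^{s\vartheta}\,\frac{|w|^\alpha}{\bigl|w^\alpha+\lambda w^{\alpha-\beta}+\mu\rho_k^s\bigr|},
$$
so everything reduces to estimating this last fraction.

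First I would record the modulus bounds on the tail. For $z=re^{\pm i(\pi-\theta)}$ with $\theta\in(0,\pi/2)$ one has $\mathrm{Re}(z\tau)=-|z|\tau\cos\theta$, hence $|e^{-z\tau}|=e^{|z|\tau\cos\theta}$. The reverse triangle inequality gives $|1-e^{-z\tau}|\ge e^{|z|\tau\cos\theta}-1$, while $|1-e^{-z\tau}|\le 1+e^{|z|\tau\cos\theta}$; together with $1/\tau\le|z|\sin\theta/\pi$, valid on $\{|z|>\pi/(\tau\sin\theta)\}$, these yield
$$
\frac{e^{|z|\tau\cos\theta}-1}{\tau}\le|w|\le c\,|z|\,e^{|z|\tau\cos\theta}.
$$
The lower bound is the crucial one: since $|z|\tau\ge\pi/\sin\theta$ on the tail, it forces $|w|\ge(e^{\pi\cot\theta}-1)/\tau\to\infty$ as $\tau\to0$. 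This is exactly where $\tau^\ast$ enters — I would choose $\tau^\ast=\tau^\ast(\rho_k^s)$ so small that $|w|\ge W_0:=\max\{(4\lambda)^{1/\beta},(4\mu\rho_k^s)^{1/\alpha}\}$ holds on the whole tail.

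With $|w|\ge W_0$ at hand, the reverse triangle inequality gives $|w^\alpha+\lambda w^{\alpha-\beta}+\mu\rho_k^s|\ge|w|^\alpha-\lambda|w|^{\alpha-\beta}-\mu\rho_k^s\ge c(|w|^\alpha+\rho_k^s)$, because the threshold $W_0$ makes each lower‑order term at most $\tfrac14|w|^\alpha$. Then the elementary interpolation inequality $\tfrac{x}{x+y}\le(x/y)^\vartheta$ (valid for $x,y>0$, $\vartheta\in[0,1]$) applied with $x=|w|^\alpha$, $y=\rho_k^s$ produces
$$
\rho_k^{s\vartheta}\,\frac{|w|^\alpha}{|w^\alpha+\lambda w^{\alpha-\beta}+\mu\rho_k^s|}\le c\,\rho_k^{s\vartheta}\Bigl(\frac{|w|^\alpha}{\rho_k^s}\Bigr)^\vartheta=c\,|w|^{\alpha\vartheta}.
$$
Inserting the upper bound $|w|^{\alpha\vartheta}\le c|z|^{\alpha\vartheta}e^{\alpha\vartheta|z|\tau\cos\theta}\le c|z|^{\alpha\vartheta}e^{\vartheta\alpha|z|\tau}$ and dividing by $|z|$ gives the asserted estimate $c|z|^{\alpha\vartheta-1}e^{\vartheta\alpha|z|\tau}$, with a constant independent of $k$ and of $\tau$.

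The main obstacle is conceptual rather than computational: on the tail $\Gamma_{\kappa,\pi-\theta}\setminus\Gamma^\tau_{\kappa,\pi-\theta}$ one has $\mathrm{Im}(z\tau)>\pi$, so $w=\delta_\tau(e^{-z\tau})$ leaves the sector $\Sigma_{\pi-\theta}$ — its argument winds around as $|z|$ grows — and the sectorial resolvent bound of Lemma \ref{Gun} (and hence the interpolation estimate (\ref{inter2})) is no longer available. The whole point is therefore to replace the sectorial argument by a largeness‑of‑$|w|$ argument: one exploits that the truncation $\tau<\tau^\ast$ pushes $|w|$ past the threshold $W_0$, so that the leading term $w^\alpha$ dominates the symbol and the reverse triangle inequality substitutes for the missing angle condition. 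Care is needed to let $\tau^\ast$ depend on $\rho_k^s$ precisely as $W_0$ requires, while keeping the final constant $c$ uniform in $k$.
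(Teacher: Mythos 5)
Your proof is correct and rests on the same mechanism as the paper's: on the tail $\Gamma_{\kappa,\pi-\theta}\setminus\Gamma_{\kappa,\pi-\theta}^\tau$ one has $|\delta_\tau(e^{-z\tau})|\geq c/\tau$ and $|\delta_\tau(e^{-z\tau})|\leq c|z|e^{|z|\tau}$, so that for $\tau<\tau^\ast(\rho_k^s)$ the leading power $w^\alpha$ dominates the symbol and the upper modulus bound supplies the factor $|z|^{\alpha\vartheta}e^{\vartheta\alpha|z|\tau}$. The difference is one of self-containedness: the paper imports both modulus bounds and the $\lambda=0$ case of the estimate from Lemma 4.2 of \cite{Nie} and then simply asserts $|h(\delta_\tau(e^{-z\tau}))|\geq|\delta_\tau(e^{-z\tau})|^{\alpha}$, whereas you derive the modulus bounds directly from $\mathrm{Re}(z\tau)=-|z|\tau\cos\theta$ together with $|z|\tau\geq\pi/\sin\theta$, and you justify the domination of $w^\alpha$ over $\lambda w^{\alpha-\beta}$ and $\mu\rho_k^s$ quantitatively through the threshold $W_0$ and the reverse triangle inequality. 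This is a genuine gain in rigor, because on this portion of the contour $w=\delta_\tau(e^{-z\tau})$ need not lie in the sector $\Sigma_{\pi-\theta}$, so the sectorial lower bound on $|h(w)|$ that the paper states without proof is precisely the point that requires your largeness-of-$|w|$ argument; your explicit use of $x/(x+y)\leq(x/y)^{\vartheta}$ also makes the appearance of $\rho_k^{s\vartheta}$ and the exponent $\alpha\vartheta$ transparent, with a final constant visibly uniform in $k$.
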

\begin{proof}
Let $z\in \Gamma_{\kappa, \pi-\theta}\setminus \Gamma_{\kappa, \pi-\theta}^\tau$, the Lemma 4.2 in \cite{Nie}, we have
$$
|\rho_k^s  z ^{-1}(\delta_\tau(e^{-z\tau}))^{\alpha }( (\delta_\tau(e^{-z\tau}))^\alpha+\rho_k^s))^{-1}|\leq c|z|^{\alpha\vartheta-1}  e^{\alpha\vartheta|z|\tau}.
$$
Moreover, since $|\delta_\tau(e^{-z\tau})|\geq\frac{e-1}{\tau}$  and $|\delta_\tau(e^{-z\tau})|\leq |z|e^{|z|\tau}$ by \cite{Nie}, let $\tau$  be small enough to satisfy $(\frac{e-1}{\tau})^\alpha>2\rho_k^s$, from $|h(\delta_\tau(e^{-z\tau}))|\geq 
 |\delta_\tau(e^{-z\tau})|^\alpha$ it follows that 
$$
\big|\rho_k^{s\vartheta} \delta_\tau(e^{-z\tau})(h(\delta_\tau(e^{-z\tau}))+\rho_k^s)^{-1} \big|\leq  c |z|^{\alpha\vartheta} e^{\alpha\vartheta|z|\tau}.
$$
We thus obtain the desired result.
\end{proof}

\begin{theorem}\label{thm8}
	Let $u_N(t_n)$ and $u_N^n$ be the solutions of (\ref{approximation}) and (\ref{approximation2}), respectively.  Let $s>\frac{1-H_1}{2}$, $2sH_2/\alpha+ H_1-1 >0$, then for small $\epsilon\in (0,2H_2- {\alpha(1-H_1 )}/{s})$ there
	holds
$$ \mathbb E\| u_N(t_n)- u^n_N\|^2
	\leq c \tau^{ 2H_2+{\alpha( H_1-1 )}/{s}-\epsilon}.
	$$
\end{theorem}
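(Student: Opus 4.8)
The plan is to compare the integral representation (\ref{120}) of the semidiscrete solution $u_N(t_n)$ with the analogous representation of the fully discrete iterate $u_N^n$ in terms of the operator $\mathscr S_N$ and the kernel $\mathcal F_{W,N}$. Writing $e^n:=u_N(t_n)-u_N^n$ and observing that both representations carry the same spatial Wong--Zakai factor $e_{W,k}(y)$, the error splits as $\mathbb E\|e^n\|^2\le 2\mathbb E\|e_1^n\|^2+2\mathbb E\|e_2^n\|^2$, where $e_1^n$ collects the deterministic source terms and $e_2^n$ the stochastic terms. In the eigenbasis the comparison therefore reduces, for $k\le N$, to the difference between the continuous modal kernel $u_k(t)$ (the time component of $S_N$) and its convolution–quadrature counterpart $U_{k,1}(t)$ (the time component of $\mathscr S_N$), whose frequency-domain discrepancy is exactly $h_\delta(z,\rho_k^s)$ of Lemma \ref{bouned}, complemented by the tail bound of Lemma \ref{12121} off the truncated contour.

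For the deterministic part I would further decompose
$$e_1^n=\int_0^{t_n}\big(S_N(t_n-\upsilon)-\mathscr S_N(t_n-\upsilon)\big)P_Nf(u_N(\upsilon))\,d\upsilon+\int_0^{t_n}\mathscr S_N(t_n-\upsilon)\big(P_Nf(u_N(\upsilon))-\bar{\mathscr F}_N(\upsilon)\big)\,d\upsilon.$$
The first integral is the quadrature consistency error: its $k$-th modal kernel is governed by $h_\delta(z,\rho_k^s)$, which carries an explicit factor $\tau$ by Lemma \ref{bouned}, so the linear-growth bound in (\ref{f}) together with the a priori estimate of Theorem \ref{thm6} controls it by a power of $\tau$. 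The second integral I would split, using $\bar{\mathscr F}_N(\upsilon)=P_Nf(u_N^{j-1})$ on $(t_j,t_{j+1}]$, into the time-quadrature error $f(u_N(\upsilon))-f(u_N(t_{j-1}))$ (handled by the Lipschitz assumption (\ref{f}) and the temporal H\"older regularity of $u_N$ from Theorem \ref{thm7}) and the genuinely recursive term $f(u_N(t_{j-1}))-f(u_N^{j-1})$, which by Lipschitz continuity produces a contribution bounded by $c\tau\sum_{j<n}\mathbb E\|e^{j}\|^2$ and is absorbed at the end by the Gronwall--Henry inequality.

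For the stochastic part I would invoke the It\^o isometry of Lemma \ref{xi-H}, bounding $\mathbb E\|e_2^n\|^2$ by $c\sum_{k\le N}\big\|\partial_\upsilon^{(1-2H_2)/2}(u_k(t_n-\upsilon)-U_{k,1}(t_n-\upsilon))\gamma(\upsilon)\big\|_{L^2(0,t_n)}^2\|e_{W,k}\|_{H_0^{(1-2H_1)/2}}^2$. Inserting an interpolation weight $\rho_k^{s\vartheta}$ and using $\|e_{W,k}\|_{H_0^{(1-2H_1)/2}}^2\le c\rho_k^{(1-2H_1)/2}$, it remains to estimate the weighted fractional-time norm of the kernel difference. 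I would represent this difference as a contour integral over $\Gamma_{\kappa,\pi-\theta}$, splitting into the truncated part $\Gamma_{\kappa,\pi-\theta}^\tau$, where Lemma \ref{bouned} and the interpolation bound $|z|^\alpha/(|z|^\alpha+\rho_k^s)\le |z|^{\alpha\vartheta}\rho_k^{-s\vartheta}$ give the factor $\tau$, and its complement, where the continuous kernel decays exponentially and the discrete kernel is controlled by Lemma \ref{12121}. The derivative $\partial^{(1-2H_2)/2}_\upsilon$ contributes an extra $|z|^{(1-2H_2)/2}$; cutting the contour at $|z|\sim1/\tau$ yields a modal bound of the form $c\,\tau^{2(H_2-\alpha\vartheta)}\rho_k^{-2s\vartheta}$ (valid when $\alpha\vartheta<H_2$, in analogy with Lemma \ref{deri}). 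Summing, $\mathbb E\|e_2^n\|^2\le c\,\tau^{2(H_2-\alpha\vartheta)}\sum_k\rho_k^{-2s\vartheta+(1-2H_1)/2}$, and since $\rho_k\simeq k^2$ the series converges iff $\vartheta>(1-H_1)/(2s)$; the existence of an admissible $\vartheta$ with $(1-H_1)/(2s)<\vartheta<H_2/\alpha$ is precisely guaranteed by the hypothesis $2sH_2/\alpha+H_1-1>0$. Letting $\vartheta\downarrow(1-H_1)/(2s)$ then produces the exponent $2H_2+\alpha(H_1-1)/s-\epsilon$.

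Combining the two parts and applying the Gronwall--Henry inequality yields the stated rate, the stochastic term being the dominant (slowest) contribution. The main obstacle is the stochastic estimate of the third paragraph: matching the consistency factor $\tau$ of Lemma \ref{bouned} to the sharp rate requires a careful contour split in the fractional-in-time $L^2$ norm, tracking the endpoint contribution at $|z|\sim1/\tau$ and the exponentially small tail from Lemma \ref{12121}, and then optimizing the interpolation parameter $\vartheta$ against the summability constraint that forces exactly the combination $2H_2+\alpha(H_1-1)/s$.
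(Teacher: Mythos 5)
Your proposal is correct and follows essentially the same route as the paper: the same three-way split of the deterministic part (temporal H\"older regularity of $f(u_N)$ via Theorem \ref{thm7}, the consistency error $S_N-\mathscr S_N$ via Lemma \ref{bouned}, and the recursive Lipschitz term absorbed by the discrete Gronwall inequality), and the same treatment of the stochastic part via the It\^o isometry, the contour split between $\Gamma_{\kappa,\pi-\theta}^\tau$ and its complement using Lemmas \ref{bouned} and \ref{12121}, and optimization of the interpolation weight $\rho_k^{s\vartheta}$ against the summability constraint $\vartheta>(1-H_1)/(2s)$. The only cosmetic difference is that you compare $u_k$ directly with $U_{k,1}$, whereas the paper inserts the intermediate kernel $u_{k,1}$ and treats the two resulting pieces as separate terms $F_2$ and $F_3$; the estimates needed are identical.
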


\begin{proof}
	Let $e_N^n =u_N(t_n)- u^n_N$, from (\ref{120}) and (\ref{121}), we get
\begin{align*}
\mathbb E\| e_N ^n\|^2
  \leq& 3\mathbb E\left\| \int_0^{t_n}S_N(t_n-\upsilon)\mathscr{F}_N(\upsilon)-\mathscr{S}_N(t_n-\upsilon)\bar{\mathscr F}_N(\upsilon)d\upsilon\right\|^2\\
  &+3\mathbb E\bigg\| \int_0^{t_n} \int_D  (F_{W,N}(t_n-\upsilon,x,y)-\mathcal{F}_{W,N}(t_n-\upsilon,x,y))\gamma(\upsilon)  \xi^{H_1,H_{2}}(dy, d\upsilon)\bigg\|^2\\
  &+3\mathbb E\bigg\| \int_0^{t_n}\int_D \mathcal{F}_{W,N}(t_n-\upsilon,x,y)\gamma(\upsilon)(\xi^{H_1,H_{2}}(dy, d\upsilon)-\xi_{W}^{H_1,H  _2}(dy,d\upsilon)) \bigg\|^2\\
  \leq &3 F_1+3F_2+3F_3.
\end{align*}	

To estimate $F_1$, we have
\begin{align*}
F_1\leq &3\mathbb E\left\|\sum_{i=1}^n \int_{t_{i-1}}^{t_i}S_N(t_n-\upsilon)(\mathscr{F}_N(\upsilon)-  {\mathscr F}_N(t_{i-1}))d\upsilon\right\|^2\\
&+3\mathbb E\left\|\sum_{i=1}^n \int_{t_{i-1}}^{t_i}(S_N(t_n-\upsilon) -\mathscr{S}_N(t_n-\upsilon)) {\mathscr F}_N(t_{i-1})d\upsilon\right\|^2\\
&+3\mathbb E\left\|\sum_{i=1}^n \int_{t_{i-1}}^{t_i} \mathscr{S}_N(t_n-\upsilon)({\mathscr{F}}_N(t_{i-1})-\bar{\mathscr F}_N(t_i))d\upsilon\right\|^2\\
\leq &3 F_{11}+ 3F_{12}+ 3F_{13}.
\end{align*}	
Theorem \ref{thm3} shows that
\begin{align*}
	F_{11}\leq & c\sum_{i=1}^n \int_{t_{i-1}}^{t_i}\mathbb E \|u_N(\upsilon)-  u_N(t_{i-1})\|^2d\upsilon  
	\leq  c \tau^{2H_2+(H_1-1)\alpha/s}.
\end{align*}
From Lemma \ref{bouned}, we note that for any $v\in H_N$,
\begin{align*}
&	\| (S_N( \upsilon) -\mathcal{S}_N( \upsilon))P_Nv \|^2 \\ \leq &c\sum_{k=1}^N \bigg|\int_{\Gamma_{\kappa, \pi-\theta}\setminus \Gamma_{\kappa, \pi-\theta}^\tau} e^{z\upsilon}\mu^{-1}z^{\alpha-1}(h(z)+\rho_k^s)^{-1}dz\bigg |^2v_k^2 
 +	c\sum_{k=1}^N \bigg| \int_{\Gamma_{\kappa, \pi-\theta}^\tau} e^{z\upsilon} \mu^{-1} h_\delta(z,\rho_k^s) \bigg |^2v_k^2\\
\leq& c\sum_{k=1}^N \bigg(\int_{\Gamma_{\kappa, \pi-\theta}\setminus \Gamma_{\kappa, \pi-\theta}^\tau}  |e^{z\upsilon} z^{\alpha-1}|(|z|^\alpha+\rho_k^s)^{-1} | dz|\bigg )^2v_k^2\\
&+	c\tau^2 \sum_{k=1}^N \bigg( \int_{\Gamma_{\kappa, \pi-\theta}^\tau} |e^{z\upsilon}| |z|^{\alpha }(|z|^\alpha+\rho_k^s)^{-1} ||dz|\bigg )^2v_k^2\\
\leq& c\sum_{k=1}^N  \bigg(\int_{\Gamma_{\kappa, \pi-\theta}\setminus \Gamma_{\kappa, \pi-\theta}^\tau}  |e^{z\upsilon}|    |z|^{  -1 }  | dz| \bigg)^2 v_k^2 
 +	c\tau^2 \sum_{k=1}^N  \bigg( \int_{\Gamma_{\kappa, \pi-\theta}^\tau} |e^{ z\upsilon}|  |dz|  \bigg)^2 v_k^2\\
\leq & c\left( \tau^{2-2\epsilon} \bigg(\int_{\Gamma_{\kappa, \pi-\theta}\setminus \Gamma_{\kappa, \pi-\theta}^\tau}  |e^{z\upsilon}|    |z|^{  -\epsilon }  | dz| \bigg)^2+	c\tau^{2-2\epsilon }   \int_{\Gamma_{\kappa, \pi-\theta}^\tau} |e^{ z\upsilon}|^2 |z|^{1-2\epsilon }  |dz|   \right) \| P_Nv\|,
\end{align*}	
where we used the inequalities $
|z|^\alpha(|z|^\alpha+\rho_k^s)^{-1}\leq  1$, 
and  
$$
\int_{\Gamma_{\kappa, \pi-\theta}^\tau}   |z|^{2\epsilon -1} ||dz|\leq c\int_\kappa^{\frac{\pi}{\tau\sin(\theta)}} r^{2\epsilon -1 } dr+\int_{-(\pi-\theta)}^{\pi-\theta} \kappa^{ 2\epsilon }d\psi\leq c\tau^{-2\epsilon }.
$$
Therefore, by Cauchy-Schwarz inequality, for $\epsilon<1/2$, 
the estimate of $F_{12}$ is given by
\begin{align*}
F_{12}\leq &\mathbb E\sum_{i=1}^{n}\int_{t_{i-1}}^{t_i} (t_n-\upsilon)^{1-\epsilon} \|  S_N(t_n-\upsilon) -\mathcal{S}_N(t_n-\upsilon)\|^2\| {\mathscr F}_N(t_{i-1})\|^2d\upsilon \\
\leq &c \int_{0}^{t_n} (t_n-\upsilon)^{1-\epsilon} \|  S_N(t_n-\upsilon) -\mathcal{S}_N(t_n-\upsilon)\|^2\mathbb E\| u(\upsilon)\|^2d\upsilon \\
\leq& c\tau^{2-2\epsilon} \int_0^{t_n}  \upsilon ^{ 1-\epsilon }  \bigg( \int_{\Gamma_{\kappa, \pi-\theta}\setminus \Gamma_{\kappa, \pi-\theta}^\tau}    |e^{z\upsilon}|    |z|^{  -\epsilon }  | dz| \bigg)^{ 2} d\upsilon    \\
&+ 	c\tau^{2-2\epsilon} \int_0^{t_n}  \upsilon ^{1-\epsilon} \int_{\Gamma_{\kappa, \pi-\theta}^\tau} |e^{ z \upsilon  }|^2 |z|^{1-2\epsilon } ||dz|  \\
\leq& c\tau^{2-2\epsilon}.
\end{align*}	
The estimate of $F_{13}$ is given by
$$
F_{13}\leq c\tau \sum_{i=1}^{n-1}\mathbb E\|u_N(t_{i})-u_N^i\|^2=c\tau \sum_{i=1}^{n-1}\mathbb E\|e_N ^i\|^2 .
$$

To estimate $F_2$, from Remark \ref{Re-2}, we first note that
\begin{align*}
\int_0^{t_n} \bigg | \partial_\upsilon^{\frac{1-2H_2}{2}} \rho_k^{sw}( u_k(t_n-\upsilon)& -u_{k,1}(t_n-\upsilon))\gamma(\upsilon) \bigg |^2d\upsilon  
  \leq  c\|    u_{k,2}(t_n-\upsilon)  \gamma(\upsilon) \|_{H^{\frac{1-2H_2}{2}} (0,t_n)}^2 \\
 &+c\|   u_{k,2}(t_n-\upsilon)-u_{k,1}(t_n-\upsilon) )\gamma(\upsilon) \|_{H^{\frac{1-2H_2}{2}} (0,t_n)}^2\\
  \leq & c\left \|   \int_{\Gamma_{\kappa,\pi- \theta}\setminus\Gamma_{\kappa,\pi- \theta}^\tau}|e^{z \upsilon}|  |z|^{\frac{1-2H_2}{2} }\widetilde{u}_{k,2}(z) |dz| \right\| ^2 _{L^2(0,t_n)}\\
&+c\left \|  \int_{ \Gamma_{\kappa,\pi- \theta}^\tau}|e^{z \upsilon}|  |z|^{\frac{1-2H_2}{2} } |\widetilde{u}_{k,2}(z)-\widetilde{u}_{k,1}(z) | |dz|\right\| ^2 _{L^2(0,t_n)}\leq F_{21}+F_{22},
\end{align*}
where
\begin{align*}
u_{k,2}(t)=&\frac{1}{2\pi i}\int_{\Gamma_{\kappa,\pi- \theta}\setminus\Gamma_{\kappa,\pi- \theta}^\tau}e^{z t} \rho_k^{sw}\mu^{-1} z^{\alpha-1}(h(z)+\rho_k^s))^{-1} dz,\\
u_{k,2}(t )-u_{k,1}(t ) =& \frac{1}{2\pi i}\int_{ \Gamma_{\kappa,\pi- \theta}^\tau}e^{z t}\rho_k^{sw}\mu^{-1}  h_\delta(z,\rho_k^s) dz.
\end{align*}
Therefore, by $\rho_k^{sw}|z|^\alpha(|z|^\alpha+\rho_k^s)^{-1}\leq  |z|^{\alpha w}$ for $w\in[0,1]$, since $u_{k,2}(0)$ is finite, we have
\begin{align*}
F_{21}\leq &  c\int_0^{t_n} \left( \int_{\Gamma_{\kappa,\pi- \theta}\setminus\Gamma_{\kappa,\pi- \theta}^\tau}|e^{z \upsilon}|  |z|^{\frac{1-2H_2}{2}+\alpha w-1} |dz|\right)^2d\upsilon\\
\leq & c\int_0^{t_n} \left(\int_{1/\tau}^\infty e^{ r\cos(\pi-\theta)\upsilon} r^{\frac{1-2H_2}{2}+\alpha w-1}dr\right)^2d\upsilon\\ 
\leq & c\int_0^{t_n} \left(\tau^{H_2-\alpha w-\frac{\epsilon}{2}}  \upsilon^{\frac{\epsilon-1}{2}} \int_{0}^\infty e^{- 2r\cos(\theta) } r^{   -\frac{\epsilon+1}{2}}dr\right)^2d\upsilon\\
\leq & c\tau^{2(H_2-\alpha w)- \epsilon  } ,
\end{align*}
for requiring $\alpha w<H_2$ and $\epsilon/2\in(0,H_2-\alpha w).$ Similarly, by using 
\begin{equation}\label{Gamma}
\begin{aligned}
	\int_{\Gamma_{\kappa, \pi-\theta}^\tau}   |e^{z\upsilon}|^{2}|z|^{ \zeta} ||dz|
	\leq &  c \int_{\kappa}^{\frac{\pi}{\tau\sin(\theta)}} e^{-2r \cos(\theta)\upsilon}r^{\zeta}dr+c\int_{-(\pi-\theta)}^{\pi-\theta} e^{2\kappa \cos(\psi)\upsilon} \kappa^{\zeta+1}d\psi\\
	\leq &  c\tau^{- \zeta-\epsilon }\left( \int_{\kappa\upsilon}^{\frac{\pi\upsilon}{\tau\sin(\theta)}} e^{-2r \cos(\theta)\upsilon }r^{-\epsilon}dr+c  e^{2\kappa  \upsilon} \kappa^{ 1-\epsilon}\right) \\
	\leq &  c\tau^{- \zeta-\epsilon } \upsilon^{ \epsilon-1} ,
\end{aligned}
\end{equation}
for $\epsilon\in(0,1-2(H_2-\alpha w))$, $\kappa t_n<\pi/\sin(\theta)$ and $\upsilon\in(0,t_n]$ implying $\upsilon\kappa< \pi/\sin(\theta)$, we have
\begin{align*}
F_{22}\leq &  c\int_0^{t_n}  \tau^2\left(\int_{ \Gamma_{\kappa,\pi- \theta}^\tau}|e^{z \upsilon}|  |z|^{\frac{1-2H_2}{2}+\alpha w } |dz|\right)^2d\upsilon\\
	\leq & c\tau^2 \int_0^{t_n} \int_{ \Gamma_{\kappa,\pi- \theta}^\tau}|e^{z \upsilon}| ^2 |z|^{ {1-2H_2}+2\alpha w } |dz| \int_{ \Gamma_{\kappa,\pi- \theta}^\tau}  |dz|d\upsilon\\
\leq &  c\tau \int_0^{t_n}   \int_{ \Gamma_{\kappa,\pi- \theta}^\tau}|e^{z \upsilon}| ^2 |z|^{ {1-2H_2}+2\alpha w } |dz|d\upsilon \leq c\tau^{2(H_2-\alpha w)-\epsilon } .
\end{align*}
This leads to
\begin{align*}
	F_2\leq & c\sum_{k=1}^N \int_0^{t_n}  \bigg | \partial_\upsilon^{\frac{1-2H_2}{2}} (u_{k}(t_n-\upsilon)-u_{k,1}(t_n-\upsilon))\gamma(\upsilon) \bigg |^2d\upsilon\|e_k\|_{H_0^{\frac{1-2H_1}{2}}(D)} \\
	\leq& c\sum_{k=1}^N \rho_k^{-\frac{1}{2}-\epsilon'}\int_0^{t_n}  \bigg | \partial_\upsilon^{\frac{1-2H_2}{2}} \rho_k^{\frac{1-H_1}{2}+\frac{\epsilon'}{2} } (u_{k}(t_n-\upsilon)-u_{k,1}(t_n-\upsilon))\gamma(\upsilon) \bigg |^2d\upsilon \\
	\leq&c \sup_{1\leq k\leq N} \int_0^{t_n}  \bigg | \partial_\upsilon^{\frac{1-2H_2}{2}} \rho_k^{\frac{1-H_1}{2}+\frac{\epsilon'}{2} } (u_{k}(t_n-\upsilon)-u_{k,1}(t_n-\upsilon))\gamma(\upsilon) \bigg |^2d\upsilon\\
	\leq& c \tau^{2 H_2- {\alpha(1-H_1)}/{ s}  -\alpha\epsilon'/s- \epsilon  }	\leq  c \tau^{2 H_2- {\alpha(1-H_1)}/{ s}   - 2\epsilon  },
\end{align*}
for some $\epsilon'\in (s\epsilon/\alpha,1)$.

Note that by Remark \ref{Re-2}, 
\begin{align*}
\int_0^{t_n}\big |\partial_\upsilon^{\frac{1-2H_2}{2}} l_{t_n}(\upsilon) \big |^2d\upsilon 
	=  \int_0^{t_n}\big \|  l_{t_n}(\upsilon)\big \|^2_{H_0^\frac{1-2H_2}{2}(0,t_n)}  ,
\end{align*}	
where
$$
l_{t_n}(\upsilon):=u_{k,1} (t_n-\upsilon )  -\frac{1}{\tau}\sum_{j=1}^n\chi_{(t_{j-1},t_j]}(\upsilon)\int_{t_{j-1}}^{t_j}u_{k,1} (t_n-\xi ) d\xi \bigg)\gamma(\upsilon),~~\upsilon\in [0,t_n).
$$
To estimate $F_3$, we have
\begin{align*}
F_3 
	\leq &   \mathbb E\bigg\| \int_0^{t_n}\sum_{k=1}^N \bigg(\mathcal{S}_N (t_n-\upsilon )-\frac{1}{\tau}\sum_{j=1}^n\chi_{(t_{j-1},t_j]}(\upsilon)\int_{t_{j-1}}^{t_j}\mathcal{S}_N (t_n-\xi ) d\xi \bigg)\\ 
	&\cdot \gamma(\upsilon) P_Ne_k(x)e_{W,k}(y)\xi^{H_1,H_2}(dy,d\upsilon) \bigg\|^2\\
	\leq& c\sum_{k=1}^N\rho_k^{-\frac{1}{2}-\epsilon''} \int_0^{t_n}\bigg|\partial_\upsilon^{\frac{1-2H_2}{2}} \rho_k^{\frac{1-H_1+\epsilon''}{2}}l_{t_n}(\upsilon)\bigg|^2d\upsilon  \\
\leq& c \sup_{1\leq k\leq N} \int_0^{t_n}\bigg|\partial_\upsilon^{\frac{1-2H_2}{2}} \rho_k^{\frac{1-H_1+\epsilon''}{2}} \big (u_{k,1} ( \upsilon )  -U_{k,1} (\upsilon  )  \big ) \bigg|^2d\upsilon\\
\leq& c \sup_{1\leq k\leq N}\bigg( \int_0^{t_1}\bigg|\partial_\upsilon^{\frac{1-2H_2}{2}} \rho_k^{\frac{1-H_1+\epsilon''}{2}} \big (u_{k,1} ( \upsilon )  -U_{k,1} (\upsilon  )  \big ) \bigg|^2d\upsilon\\
&+\int_ {t_1}^{t_n}\bigg|  \rho_k^{\frac{1-H_1+\epsilon''}{2}}  \int_{\Gamma_{\kappa,\pi- \theta}\setminus\Gamma_{\kappa,\pi- \theta}^\tau}   e^{z\upsilon} z^{\frac{1-2H_2}{2} } \widehat{U}_{k,1} (z)  dz\bigg|^2d\upsilon  \\
&+\int_ {t_1}^{t_n}\bigg|  \rho_k^{\frac{1-H_1+\epsilon''}{2}}  \int_{ \Gamma_{\kappa,\pi- \theta}^\tau}   e^{z\upsilon} z^{\frac{1-2H_2}{2} }\big (\widehat{u}_{k,1} ( z)  -\widehat{U}_{k,1} (z)  \big ) dz\bigg|^2d\upsilon \bigg)  
 \leq F_{31}+F_{32}+F_{33},
\end{align*}	
for some small $\epsilon''>0$. To estimate $F_{31}$, we note that $t_1=\tau$ and then for $H_2=1/2$, the mean value theorem shows
\begin{align*}
F_{31}\leq & \frac{c}{\tau^2}\int_0^{t_1} \bigg|\int_0^{t_1} \rho_k^{\frac{1-H_1+\epsilon''}{2}}   u_{k,1}' ( \upsilon_\xi )\chi_{(\xi,t_1)}(\upsilon_\xi)(t_1-\xi)d\xi    \bigg|^2d\upsilon\\
\leq & c\tau^{1+2\epsilon''-\alpha (1-H_1)/s -\alpha\epsilon''/s },
\end{align*}
since
$$
|\rho_k^{\frac{1-H_1+\epsilon''}{2}}   u_{k,1}' ( t)|\leq c\tau^{-\alpha (1-H_1)/2s-\alpha\epsilon''/2s}t^{\epsilon''-1},
$$
from $|\widetilde{u}_{k,1}'(z)|\leq c |e^{zt}| |z|^{\alpha}(|z|^\alpha+\rho_k^s)^{-1},
$ $z\in \Gamma_{\kappa, \pi-\theta}^\tau$. Similarly, for $H_2\in(0,1/2)$, we have
\begin{align*}
	F_{31}\leq & c\int_0^{t_1} \bigg|\int_0^{r} k_{H_2-\frac{1}{2}}(r-\upsilon) \rho_k^{\frac{1-H_1+\epsilon''}{2}}   \frac{d}{d\upsilon}(u_{k,1} ( \upsilon  ) -u_{k,1} (0 ) ) d\upsilon  \bigg|^2dr\\
	&+c\int_0^{t_1} \bigg|\frac{d}{dr}\int_0^{r} k_{H_2-\frac{1}{2}}(r-\upsilon) \rho_k^{\frac{1-H_1+\epsilon''}{2}}    (u_{k,1} (0 )-U_{k,1} ( \upsilon  )  ) d\upsilon  \bigg|^2dr\\
	\leq & c\tau^{2H_2+2\epsilon''-\alpha (1-H_1)/s -\alpha\epsilon''/s }.
\end{align*}

Note that for $z\in  \Gamma_{\kappa,\pi- \theta}\setminus\Gamma_{\kappa,\pi- \theta}^\tau$, Lemma \ref{12121} implies
\begin{align*}
\rho_k^{\frac{1-H_1+\epsilon''}{2}}| (\delta_\tau(e^{-z\tau}))^{\alpha }(h(\delta_\tau(e^{-z\tau}))+\lambda_k^s) ^{-1}|\leq & \rho_k^{\frac{1-H_1+\epsilon''}{2}}  |\delta_\tau(e^{-z\tau})|^{\alpha }( |\delta_\tau(e^{-z\tau})|^\alpha+\lambda_k^s) ^{-1}\\
\leq & |z|^{\alpha(1-H_1+\epsilon'')/2s}e^{\alpha(1-H_1+\epsilon'')|z|\tau/2s}.
\end{align*}
To estimate $F_{32}$, we have
\begin{align*}
	F_{32}\leq &\int_ {t_1}^{t_n}\bigg|    \int_{\Gamma_{\kappa,\pi- \theta}\setminus\Gamma_{\kappa,\pi- \theta}^\tau}   e^{|z|\upsilon+\alpha(1-H_1+\epsilon'')|z|\tau/2s} z^{\frac{1-2H_2}{2}+\alpha(1-H_1+\epsilon'')/2s-1 }  dz\bigg|^2d\upsilon\\
	\leq &c \tau^{2H_2-\alpha(1-H_1+\epsilon'')/ s-\epsilon} \int_ {t_1}^{t_n}   \int_{\Gamma_{\kappa,\pi- \theta}\setminus\Gamma_{\kappa,\pi- \theta}^\tau}   e^{2|z|\upsilon } |z|^{-\epsilon }  |dz| d\upsilon\\
	\leq&c\tau^{2H_2-\alpha(1-H_1+\epsilon'')/ s-\epsilon}.
\end{align*}	
To estimate $F_{33}$, Lemma \ref{Gun} and (\ref{Gamma}) show that
\begin{align*}
	F_{33}\leq &
\int_ {t_1}^{t_n}\bigg(  \rho_k^{\frac{1-H_1+\epsilon''}{2}}  \int_{ \Gamma_{\kappa,\pi- \theta}^\tau}  | e^{z\upsilon}| |z|^{\frac{1-2H_2}{2} } |\delta_\tau(e^{-z\tau})|^{\alpha-1} |(h(\delta_\tau(e^{-z\tau}))+\rho_k^s)^{-1}| |dz|\bigg)^2d\upsilon \\
\leq & \tau^{1+2H_2-\frac{\alpha(1-H_1+\epsilon'')}{s}-\epsilon}.
\end{align*}	
Together above arguments, the discrete Gronwall inequality implies the desired results. The proof is complete.
\end{proof}

%

\section{Conclusion}
This paper studies a numerical analysis for the stochastic nonlinear time-space
fractional cable equation driven by rough noise with Hurst index 
$H \in  (0, 1/2)$.   The model is characterized by nonlocal terms in both time and space. Utilizing an operator theoretic approach, we establish the existence, uniqueness, and spatial/temporal regularities of solutions. Convergence results for the regularized equation are obtained through the Wong-Zakai approximation, effectively handling the rough noise. The numerical scheme employs the spectral Galerkin method for spatial approximation and the backward Euler convolution quadrature method for temporal approximation, with error estimates subsequently derived.

\section*{Acknowledgements} 
 
This work was supported by the NFSC (Nos. 12101142).

\end{document}